
\documentclass[final,10pt]{elsarticle}%
\usepackage{lineno,hyperref}
\usepackage{epsfig,amssymb,amsmath,version,amsthm}
\usepackage{amssymb,version,graphicx,fancybox,mathrsfs,multirow}
\usepackage[notcite,notref]{showkeys}
\usepackage{url,hyperref}
\usepackage{subfigure,epstopdf,bm}
\usepackage{color}
\usepackage{accents}
\usepackage{appendix}
\usepackage{algorithm}
\usepackage{algorithmicx}
\usepackage{algpseudocode}
\usepackage{amsmath}
\usepackage{amsfonts}
\usepackage{amssymb}
\usepackage{graphicx}%
\setcounter{MaxMatrixCols}{30}
\textheight=22.2cm
\textwidth=16cm
\setlength{\oddsidemargin}{0.6cm}
\setlength{\evensidemargin}{0.6cm}
\modulolinenumbers[5]
\journal{Computer Methods in Applied Mechanics and Engineering}
\headheight=3pt \topmargin=0.3cm \textheight=8.4in \textwidth=5.6in
\setlength{\oddsidemargin}{1cm} \setlength{\evensidemargin}{1cm}
\addtolength{\voffset}{-5pt}
\catcode`\@=11 \theoremstyle{plain}
\@addtoreset{equation}{section}

\@addtoreset{figure}{section}
\renewcommand\thefigure{\thesection.\@arabic\c@figure}
\@addtoreset{table}{section}
\renewcommand\thetable{\thesection.\@arabic\c@table}
\floatname{algorithm}{Algorithm}
\newtheorem{thm}{\bf Theorem}

\newenvironment{theorem}{\begin{thm}} {\end{thm}}
\newtheorem{cor}{\bf Corollary}

\newenvironment{corollary}{\begin{cor}} {\end{cor}}
\newtheorem{lmm}{\bf Lemma}

\theoremstyle{remark}
\newtheorem{rem}{Remark}[section]
\def \ri {{\rm i}}
\newcommand{\bs}[1]{\boldsymbol{#1}}
\bibliographystyle{elsarticle-num}
\begin{document}
\begin{frontmatter}
	\title{Taylor expansion based fast Multipole Methods for 3-D Helmholtz equations in Layered Media}
	
	\author[add1,add2]{Bo Wang}
	\author[add3]{Duan Chen}
	\author[add4]{Bo Zhang}
	\author[add2]{Wenzhong Zhang}
	\author[add5]{Min Hyung Cho}
	\author[add2]{Wei Cai\corref{mycorrespondingauthor}}
	\cortext[mycorrespondingauthor]{Corresponding author}
	\ead{cai@smu.edu}
	
	\address[add1]{LCSM, Ministry of Education, School of Mathematics and Statistics, Hunan Normal University, Changsha, Hunan 410081, P. R. China}
	\address[add2]{Department of Mathematics, Southern Methodist University, Dallas, TX 75275, USA}
	\address[add3]{Department of Mathematics and Statistics, University of North Carolina at Charlotte, Charlotte, NC 28223, USA}
	\address[add4]{Department of Computer Science, Indiana University, IN 47408, USA}
	\address[add5]{Department of Mathematical Science, University of Massachusetts Lowell, Lowell, MA 01854, USA}
	\begin{abstract}
		In this paper, we develop fast multipole methods for 3D Helmholtz kernel in
		layered media. Two algorithms based on different forms of Taylor expansion of layered
		media Green's function are developed. A key component of the first algorithm is an efficient algorithm based on discrete complex image approximation and recurrence formula for the
		calculation of the layered media Green's function and its derivatives, which
		are given in terms of Sommerfeld integrals. The second algorithm uses symmetric derivatives in the Taylor expansion to reduce the size of precomputed tables for the derivatives of layered media Green's function. Numerical tests in layered media have validated the accuracy and $O(N)$ complexity of the proposed algorithms.
	\end{abstract}
	
	\begin{keyword}
		Fast multipole method, layered media,  Helmholtz equation, Taylor expansion
	\end{keyword}
	
\end{frontmatter}


\section{Introduction}

Wave scattering of objects embedded in layered media can be computed by
integral equation (IE) methods using domain Green's functions which satisfy
the transmission conditions at material layer interfaces and the Sommerfeld
radiation condition at infinity (cf.
\cite{michalski1990electromagnetic,millard2003fast,
chen2018accurate,lai2014fast}). As a result, IE methods based on domain
Green's functions only require solution unknowns to be given on the
scatterer's surface for a surface IE formulation or over the scatterer body
for a volume IE. This is different from formulations based on free space
Green's functions, which will need additional unknowns on the infinite
material layer interfaces (cf.
\cite{cho2018spectrally,cho2015robust,lai2015fast}). For the solution of the
resulting linear system from the discretized IEs, iterative solvers such as
GMRES are usually used, which require the product of a full matrix, from the
discretization of the integral operator, and a solution vector. A direct
product will generate an $O(N^{2})$ cost where $N$ is the size of the matrix.
Therefore, the main computational issue for IE methods is to develop fast
solvers to speed up such a matrix-vector product. The popular fast method is
the fast multipole method (FMM) developed by Greengard and Rokhlin using
multipole expansions for the free space Green's functions
\cite{greengard1987fast,greengard1997new}. However, extending FMMs to layered
media has been a long outstanding challenge for IE methods.

Numerical algorithms for layered-media problems have traditionally been
carried out in the Fourier spectral domain due to the availability of the
closed-form Green's functions (CFGF's) for layered media in the spectral
domain. Since a series of techniques have been developed to obtain
approximated CFGF's for layered media in the spatial domain (cf.
\cite{chow1991closed,aksun1996robust, alparslan2010closed}), extensions of
FMMs to layered media problems were proposed by applying spherical harmonic
expansions to the approximated CFGF's (see
\cite{jandhyala1995multipole,gurel1996electromagnetic, geng2001fast} for
Laplace, Helmholtz and Maxwell's equations, respectively). For the Laplace
equation, only real images were used in the approximating the CFGF's and
traditional FMMs can be then applied to the approximated CFGF's, directly.
However, for Helmholtz and Maxwell's equations in layered media, complex
images are required to obtain approximated CFGF's. Thus, addition theorems
used for the free space FMMs need \ to be modified for wave functions with
complex arguments, so far no rigorous mathematical formulations and numerical
implementation have been obtained. Other efforts to speed up the computation of integral
operator for layered media Green's functions include the inhomogeneous plane
wave method \cite{huchew2000}, windowed Green's function method for
layered-media \cite{bruno2016windowed}, and cylindrical wave decomposition of
the Green's function in 3-D and 2-D FMM \cite{cho2012}.

In this paper, we will develop FMM methods for the 3D Helmholtz equation for
layered media based on Taylor expansion instead of the mutltipole expansion.
In addition to the original FMMs \cite{greengard1987fast, greengard1997new}
using spherical harmonic expansion, FMMs based on Taylor expansion (TE) have
already been developed and investigated for free space Green's functions and
other kernels \cite{tausch2004variable,
ying2004kernel,fong2009black,darve2004efficient} and have been shown to have
similar error estimate as multipole expansion using spherical harmonic
expansion \cite{tausch2004variable}. Since analytical form of the Green's
functions in layered media usually can only be obtained in the spectral domain
using Sommerfeld integrals, it will be more convenient to develop FMMs based
on Taylor expansions in multi-layer media.

We will start with the derivation of analytical form of the Green's function
in spectral domain for Helmholtz equations in multi-layer media. Two different
versions of Taylor expansion based FMMs (TE-FMMs) will be proposed to compute
the interaction between sources and targets located in different layers. The
two versions come from using Taylor expansions with nonsymmetric derivatives
or symmetric derivatives, respectively. For the two algorithms, different
statigies are introduced for efficient computation of the translation operator
from far field expansion centered at a source box to local expansion centered
at a target box. In the case of Taylor expansion with nonsymmetric
derivatives, we propose an efficient and low memory algorithm based on
discrete complex image method (DCIM) approximation of the Green's functions in
the spectral domain together with recurrence formulas for derivatives of free
space Green's function. Meanwhile, for the case of Taylor expansion with
symmetric derivatives, precomputed tables for the translation operators will
be used, instead. With these Taylor expansion based FMMs, fast computation is
achieved for interactions among particles in multi-layer media, as shown in
numerical examples for two layers and three layers cases.

The rest of this paper is organized as follows. In section 2, a general
formulation for Green's function of Helmholtz equation in multi-layer media
are derived. Unlike the derivation presented in \cite{cho2012parallel}, the
derivation here shows source and target information in separate parts of the
formulas for general multi-layered media. In section 3, the first version
TE-FMM using non-symmetric derivatives is proposed for multi-layer media.
Using DCIM approximation and recurrence formulas for derivatives, a fast
algorithm for the computation of the translation operator from a TE in source
box to a TE in target box is given. Then, fast algorithm for interactions
among particles in multi-layer media is presented. The second TE-FMM using
symmetric derivatives is developed in Section 4. There, we first introduce the
TE-FMM using symmetric derivatives for the free space case, and then extend it
to the case of multi-layer media. Numerical results using both versions of the
TE-FMMs are given for two and three layers media in Section 5. Various
efficiency comparison results are given to show the performance of the
proposed TE-FMMs.

\section{Spectral form of Green's function in multi-layer media}

In this section, we briefly summarize the derivation of the Green's function
of Helmholtz equations in multi-layer media \cite{cho2012parallel} with source
and target coordinates separated in the Fourier spectral form.

\subsection{General formula}

Consider a layered medium consisting of $L$-interfaces located at $z=d_{\ell
},\ell=0,1,\cdots,L-1$ in Fig. \ref{layerstructure}. Suppose we have a point
source at $\boldsymbol{r}^{\prime}=(x^{\prime},y^{\prime},z^{\prime})$ in the
$\ell^{\prime}$th layer ($d_{\ell^{\prime}}<z^{\prime}<d_{\ell^{\prime}-1}$).
Then, the layered media Green's function for the Helmholtz equation satisfies
\begin{equation}
\boldsymbol{\Delta}u_{\ell\ell^{\prime}}(\boldsymbol{r},\boldsymbol{r}^{\prime
})+k_{\ell}^{2}u_{\ell\ell^{\prime}}(\boldsymbol{r},\boldsymbol{r}^{\prime
})=-\delta(\boldsymbol{r},\boldsymbol{r}^{\prime}),
\end{equation}
at field point $\boldsymbol{r}=(x,y,z)$ in the $\ell$th layer ($d_{\ell
}<z<d_{\ell}-1$) where $\delta(\boldsymbol{r},\boldsymbol{r}^{\prime})$ is the
Dirac delta function and $k_{\ell}$ is the wave number in the $\ell$th layer.
\begin{figure}[ptbh]
\label{layerstructure} \center
\includegraphics[scale=0.7]{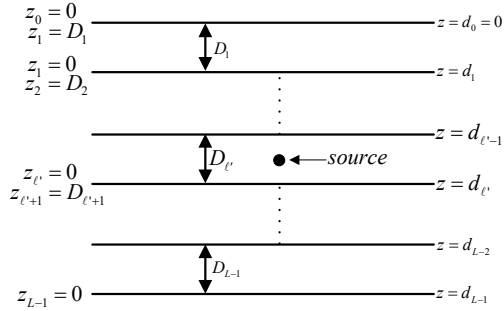}\caption{Sketch of the layer
structure for general multi-layer media.}%
\end{figure}
Define the partial Fourier transform along $x-$ and $y-$directions
for $u_{\ell\ell^{\prime}}(x,y,z)$ as
\[
\widehat{u}_{\ell\ell^{\prime}}(k_{x},k_{y}%
,z)=\mathscr{F}[u_{\ell\ell'}(\bs r, \bs r')](k_{x},k_{y},z):=\int_{-\infty
}^{\infty}\int_{-\infty}^{\infty}u_{\ell\ell^{\prime}}%
(\boldsymbol{r},\boldsymbol{r}^{\prime})e^{-\ri(k_{x}x+k_{y}y)}dxdy.
\]
Then, $\widehat{u}_{\ell\ell^{\prime}}(k_{x},k_{y},z)$ satisfies second order
ordinary differential equations
\[
\frac{d^{2}\widehat{u}_{\ell\ell^{\prime}}(k_{x},k_{y},z)}{dz^{2}}+(k_{\ell
}^{2}-k_{\rho}^{2})\widehat{u}_{\ell\ell^{\prime}}(k_{x},k_{y}%
,z)=-e^{-\ri(k_{x}x^{\prime}+k_{y}y^{\prime})}\delta(z,z^{\prime}),
\]
where $k_{\rho}^{2}=k_{x}^{2}+k_{y}^{2}$. The system of ordinary differential
equations can be solved analytically for each layer in $z$ by imposing
transmission conditions at the interface between $\ell$th and $(\ell-1)$th
layer ($z=d_{\ell-1})$, i.e.,
\[
\widehat{u}_{\ell-1,\ell^{\prime}}(k_{x},k_{y},z)=\widehat{u}_{\ell
\ell^{\prime}}(k_{x},k_{y},z),\quad k_{\ell-1}\frac{d\widehat{u}_{\ell
-1,\ell^{\prime}}(k_{x},k_{y},z)}{dz}=k_{\ell}\frac{d\widehat{u}_{\ell
\ell^{\prime}}(k_{x},k_{y},z)}{dz},
\]
as well as decay conditions in the top and bottom-most layers for
$z\rightarrow\pm\infty$. Generally, an analytic solution has the following
form
\begin{equation}%
\begin{cases}
\displaystyle\widehat{u}_{\ell\ell^{\prime}}(k_{x},k_{y},z)=A_{\ell
\ell^{\prime}}\cosh(\ri k_{\ell z}z_{\ell})+B_{\ell\ell^{\prime}}\sinh(\ri
k_{\ell z}z_{\ell}),\quad\ell\neq\ell^{\prime},L,\\[10pt]%
\displaystyle\widehat{u}_{\ell^{\prime}\ell^{\prime}}(k_{x},k_{y}%
,z)=A_{\ell^{\prime}\ell^{\prime}}\cosh(\ri k_{\ell^{\prime}z}z_{\ell^{\prime
}})+B_{\ell^{\prime}\ell^{\prime}}\sinh(\ri k_{\ell^{\prime}z}z_{\ell^{\prime
}})+\widehat{G}(k_{\ell^{\prime}z},z-z^{\prime}),\\[10pt]%
\displaystyle\widehat{u}_{L\ell^{\prime}}(k_{x},k_{y},z)=A_{L\ell^{\prime}%
}\cosh(\ri k_{\ell z}z)+B_{L\ell^{\prime}}\sinh(\ri k_{\ell z}z),
\end{cases}
\label{solutionformula}%
\end{equation}
where
\[
\widehat{G}(k_{\ell^{\prime}z},z-z^{\prime})=\vartheta\frac{e^{\ri
k_{\ell^{\prime}z}|z-z^{\prime}|}}{k_{\ell^{\prime}z}},\quad\mathrm{with}%
\;\;\vartheta=\frac{\ri e^{-\ri(k_{x}x^{\prime}+k_{y}y^{\prime})}}{2}%
\]
is the Fourier transform of free space Green's function with wave number
$k_{\ell^{\prime}}$. Notations $k_{\ell z}=\sqrt{k_{\ell}^{2}-k_{\rho}^{2}}$,
$k_{\ell^{\prime}z}=\sqrt{k_{\ell^{\prime}}^{2}-k_{\rho}^{2}}$ and local
coordinate $z_{\ell}:=z-d_{\ell}$ are used here (see Fig. \ref{layerstructure}%
). The interface conditions implies that coefficients in the $\ell$th layer
$V_{\ell\ell^{\prime}}=(A_{\ell\ell^{\prime}},B_{\ell\ell^{\prime}%
})^{\mathrm{T}}$ can be recursively determined as follow
\begin{equation}%
\begin{split}
&  V_{\ell\ell^{\prime}}=\prod\limits_{k=\ell+1}^{L}\mathbb{T}_{k-1,k}%
V_{L},\quad\ell^{\prime}<\ell<L,\quad V_{\ell^{\prime}\ell^{\prime}}%
=\prod\limits_{k=\ell^{\prime}+1}^{L}\mathbb{T}_{k-1,k}V_{L\ell^{\prime}%
}+\boldsymbol{S}_{\ell^{\prime},\ell^{\prime}+1},\\
&  V_{\ell^{\prime}-1,\ell^{\prime}}=\prod\limits_{k=\ell^{\prime}}%
^{L}\mathbb{T}_{k-1,k}V_{L}+\boldsymbol{S}_{\ell^{\prime}-1,\ell^{\prime}%
}+\mathbb{T}_{\ell^{\prime}-1,\ell^{\prime}}\boldsymbol{S}_{\ell^{\prime}%
,\ell^{\prime}+1},\\
&  V_{\ell\ell^{\prime}}=\prod\limits_{k=\ell+1}^{\ell^{\prime}-1}%
\mathbb{T}_{k-1,k}\Big(\prod\limits_{k=\ell^{\prime}}^{L}\mathbb{T}%
_{k-1,k}V_{L\ell^{\prime}}+\mathbb{T}_{\ell^{\prime}-1,\ell^{\prime}%
}\boldsymbol{S}_{\ell^{\prime},\ell^{\prime}+1}+\boldsymbol{S}_{\ell^{\prime
}-1,\ell^{\prime}}\Big),\quad0<\ell<\ell^{\prime}-1,
\end{split}
\label{recursion}%
\end{equation}
where the transfer matrix $_{\ell-1,\ell}$ are given by
\[%
\begin{split}
\mathbb T_{\ell-1,\ell}= &
\begin{pmatrix}
\displaystyle\cosh(\ri k_{\ell z}D_{\ell}) & \displaystyle\sinh(\ri k_{\ell
z}D_{\ell})\\[6pt]%
\displaystyle\frac{k_{\ell}k_{\ell z}\sinh(\ri k_{\ell z}D_{\ell})}{k_{\ell
-1}k_{\ell-1,z}} & \displaystyle\frac{k_{\ell}k_{\ell z}\cosh(\ri k_{\ell
z}D_{\ell})}{k_{\ell-1}k_{\ell-1,z}}%
\end{pmatrix}
,\quad\ell=1,2,\cdots,L-1,\\
\mathbb T_{L-1,L}= &
\begin{pmatrix}
\displaystyle\cosh(\ri k_{Lz}d_{L-1}) & \displaystyle\sinh(\ri k_{Lz}%
d_{L-1})\\[6pt]%
\displaystyle\frac{k_{N}k_{Lz}\sinh(\ri k_{Lz}d_{L-1})}{k_{L-1}k_{L-1,z}} &
\displaystyle\frac{k_{L}k_{Lz}\cosh(\ri k_{Lz}d_{L-1})}{k_{L-1}k_{L-1,z}}%
\end{pmatrix}
,
\end{split}
\]
and source vectors are defined as follows:
\begin{equation}
\boldsymbol{S}_{\ell^{\prime}-1,\ell^{\prime}}=%
\begin{pmatrix}
\displaystyle1\\
\displaystyle\frac{k_{\ell^{\prime}}k_{\ell^{\prime}z}}{k_{\ell^{\prime}%
-1}k_{\ell^{\prime}-1,z}}%
\end{pmatrix}
\frac{\vartheta e^{\ri k_{\ell^{\prime}z}(d_{\ell^{\prime}-1}-z^{\prime})}%
}{k_{\ell^{\prime}z}},\quad\boldsymbol{S}_{\ell^{\prime},\ell^{\prime}%
+1}=\left(
\begin{array}
[c]{r}%
\displaystyle-1\\
\displaystyle1
\end{array}
\right)  \frac{\vartheta e^{\ri k_{\ell^{\prime}z}(z^{\prime}-d_{\ell^{\prime
}})}}{k_{\ell^{\prime}z}}.
\end{equation}
The decaying conditions on the top and bottom layers yield initial values for
the recursion \eqref{recursion}
\begin{equation}
A_{0\ell^{\prime}}=B_{0\ell^{\prime}},\quad A_{L\ell^{\prime}}=-B_{L\ell
^{\prime}}.
\end{equation}
Therefore, the system of algebraic equations between $V_{0\ell^{\prime}}$ and
$V_{L\ell^{\prime}}$ can be found from \eqref{recursion} as
\begin{equation}%
\begin{split}%
\begin{pmatrix}
A_{0\ell^{\prime}}\\
A_{0\ell^{\prime}}%
\end{pmatrix}
= &  \prod\limits_{k=1}^{L}\mathbb{T}_{k-1,k}V_{L\ell^{\prime}}+\prod
\limits_{k=1}^{\ell^{\prime}-1}\mathbb{T}_{k-1,k}\boldsymbol{S}_{\ell^{\prime
}-1,\ell^{\prime}}+\prod\limits_{k=1}^{\ell^{\prime}}\mathbb{T}_{k-1,k}%
\boldsymbol{S}_{\ell^{\prime},\ell^{\prime}+1}\\
= &
\begin{pmatrix}
\alpha_{11} & \alpha_{12}\\
\alpha_{21} & \alpha_{22}%
\end{pmatrix}
\left(
\begin{array}
[c]{r}%
A_{L\ell^{\prime}}\\
-A_{L\ell^{\prime}}%
\end{array}
\right)  +%
\begin{pmatrix}
\beta_{11}\\
\beta_{21}%
\end{pmatrix}
\vartheta e^{\ri k_{\ell^{\prime}z}(d_{\ell^{\prime}-1}-z^{\prime})}+%
\begin{pmatrix}
\beta_{12}\\
\beta_{22}%
\end{pmatrix}
\vartheta e^{\ri k_{\ell^{\prime}z}(z^{\prime}-d_{\ell^{\prime}})}.
\end{split}
\end{equation}
It is important to point out that $\alpha_{ij}$ and $\beta_{ij}$ are
independent of the source location $(x^{\prime},y^{\prime},z^{\prime})$, which
only depend on $\{k_{\ell},k_{\ell z}\}_{\ell=0}^{L}$ and $\{D_{\ell}%
\}_{\ell=1}^{L-1}$. Therefore
\begin{equation}%
\begin{split}
A_{0\ell^{\prime}}=B_{0\ell^{\prime}}= &  \frac{[(\alpha_{22}-\alpha
_{21})\beta_{11}+(\alpha_{11}-\alpha_{12})\beta_{21}]}{[(\alpha_{11}%
-\alpha_{12})-(\alpha_{21}-\alpha_{22})]}\frac{\ri e^{-\ri(k_{x}x^{\prime
}+k_{y}y^{\prime})}}{2}\frac{e^{\ri k_{\ell^{\prime}z}(d_{\ell^{\prime}%
-1}-z^{\prime})}}{k_{\ell^{\prime}z}}\\
&  +\frac{[(\alpha_{22}-\alpha_{21})\beta_{12}+(\alpha_{11}-\alpha_{12}%
)\beta_{22}]}{[(\alpha_{11}-\alpha_{12})-(\alpha_{21}-\alpha_{22})]}\frac{\ri
e^{-\ri(k_{x}x^{\prime}+k_{y}y^{\prime})}}{2}\frac{e^{\ri k_{\ell^{\prime}%
z}(z^{\prime}-d_{\ell^{\prime}})}}{k_{\ell^{\prime}z}},\\
A_{L\ell^{\prime}}=-B_{L\ell^{\prime}}= &  \frac{\beta_{21}-\beta_{11}%
}{[(\alpha_{11}-\alpha_{12})-(\alpha_{21}-\alpha_{22})]}\frac{\ri
e^{-\ri(k_{x}x^{\prime}+k_{y}y^{\prime})}}{2}\frac{e^{\ri k_{\ell^{\prime}%
z}(d_{\ell^{\prime}-1}-z^{\prime})}}{k_{\ell^{\prime}z}}\\
&  +\frac{\beta_{22}-\beta_{12}}{[(\alpha_{11}-\alpha_{12})-(\alpha
_{21}-\alpha_{22})]}\frac{\ri e^{-\ri(k_{x}x^{\prime}+k_{y}y^{\prime})}}%
{2}\frac{e^{\ri k_{\ell^{\prime}z}(z^{\prime}-d_{\ell^{\prime}})}}%
{k_{\ell^{\prime}z}}.
\end{split}
\label{coefficientsolver}%
\end{equation}
Together with recursions \eqref{recursion}, any coefficients $\{A_{\ell
\ell^{\prime}},B_{\ell\ell^{\prime}}\}_{\ell=0}^{L}$ can be represented by
\begin{equation}%
\begin{split}
&  A_{\ell\ell^{\prime}}=\big(A_{\ell\ell^{\prime}}^{1}e^{\ri k_{\ell^{\prime
}z}(d_{\ell^{\prime}-1}-z^{\prime})}+A_{\ell\ell^{\prime}}^{2}e^{\ri
k_{\ell^{\prime}z}(z^{\prime}-d_{\ell^{\prime}})}\big)\frac{\ri e^{-\ri(k_{x}%
x^{\prime}+k_{y}y^{\prime})}}{2k_{\ell^{\prime}z}},\\
&  B_{\ell\ell^{\prime}}=\big(B_{\ell\ell^{\prime}}^{1}e^{\ri k_{\ell^{\prime
}z}(d_{\ell^{\prime}-1}-z^{\prime})}+B_{\ell\ell^{\prime}}^{2}e^{\ri
k_{\ell^{\prime}z}(z^{\prime}-d_{\ell^{\prime}})}\big)\frac{\ri e^{-\ri(k_{x}%
x^{\prime}+k_{y}y^{\prime})}}{2k_{\ell^{\prime}z}},
\end{split}
\end{equation}
where coefficients $\{A_{\ell\ell^{\prime}}^{1},A_{\ell\ell^{\prime}}%
^{2};B_{\ell\ell^{\prime}}^{1},B_{\ell\ell^{\prime}}^{2}\}_{\ell,\ell^{\prime
}=0}^{L}$ only depend on $\{k_{\ell},k_{\ell z}\}_{\ell=0}^{L}$ and
$\{D_{\ell}\}_{\ell=1}^{L-1}$.

Expressions given by \eqref{solutionformula} have upgoing and downgoing wave
mixed. It is usually more convenient to rewrite those as upgoing and downgoing
components
\begin{equation}%
\begin{cases}
\displaystyle\widehat{u}_{0\ell^{\prime}}(k_{x},k_{y},z)=b_{0\ell^{\prime}%
}\vartheta\frac{e^{\ri k_{0z}z}}{k_{0z}},\\[10pt]%
\displaystyle\widehat{u}_{\ell\ell^{\prime}}(k_{x},k_{y},z)=a_{\ell
\ell^{\prime}}\vartheta\frac{e^{-\ri k_{\ell z}(z-d_{\ell})}}{k_{\ell z}%
}+b_{\ell\ell^{\prime}}\vartheta\frac{e^{\ri k_{\ell z}(z-d_{\ell})}}{k_{\ell
z}},\quad i\neq0,\ell^{\prime},L,\\[10pt]%
\displaystyle\widehat{u}_{\ell^{\prime}\ell^{\prime}}(k_{x},k_{y}%
,z)=a_{\ell^{\prime}\ell^{\prime}}\vartheta\frac{e^{-\ri k_{\ell^{\prime}%
z}(z-d_{\ell^{\prime}})}}{k_{\ell z}}+b_{\ell^{\prime}\ell^{\prime}}%
\vartheta\frac{e^{\ri k_{\ell^{\prime}z}(z-d_{\ell^{\prime}})}}{k_{\ell z}%
}+\widehat{G}(k_{\ell^{\prime}z},z-z^{\prime}),\\[10pt]%
\displaystyle\widehat{u}_{L\ell^{\prime}}(k_{x},k_{y},z)=a_{L\ell^{\prime}%
}\vartheta\frac{e^{-\ri k_{Lz}z}}{k_{Lz}},
\end{cases}
\label{updownspectraldoamin}%
\end{equation}
where
\begin{equation}%
\begin{split}
a_{\ell\ell^{\prime}}= &  \frac{k_{\ell z}}{2k_{\ell^{\prime}z}}(A_{\ell
\ell^{\prime}}^{1}-B_{\ell\ell^{\prime}}^{1})e^{\ri k_{\ell^{\prime}z}%
(d_{\ell^{\prime}-1}-z^{\prime})}+\frac{k_{\ell z}}{2k_{\ell^{\prime}z}%
}(A_{\ell\ell^{\prime}}^{2}-B_{\ell\ell^{\prime}}^{2})e^{\ri k_{\ell^{\prime
}z}(z^{\prime}-d_{\ell^{\prime}})},\\
b_{\ell\ell^{\prime}}= &  \frac{k_{\ell z}}{2k_{\ell^{\prime}z}}(A_{\ell
\ell^{\prime}}^{1}+B_{\ell\ell^{\prime}}^{1})e^{\ri k_{\ell^{\prime}z}%
(d_{\ell^{\prime}-1}-z^{\prime})}+\frac{k_{\ell z}}{2k_{\ell^{\prime}z}%
}(A_{\ell\ell^{\prime}}^{2}+B_{\ell\ell^{\prime}}^{2})e^{\ri k_{\ell^{\prime
}z}(z^{\prime}-d_{\ell^{\prime}})}.
\end{split}
\end{equation}
It is important to note that $z^{\prime}$ only appears in the exponentials. In
general, these can be written in the form of
\begin{equation}%
\begin{split}
a_{\ell\ell^{\prime}}= &  \sigma_{\ell\ell^{\prime}}^{\downarrow\downarrow
}(k_{\rho})e^{\ri k_{\ell^{\prime}z}(d_{\ell^{\prime}-1}-z^{\prime})}%
+\sigma_{\ell\ell^{\prime}}^{\downarrow\uparrow}(k_{\rho})e^{\ri
k_{\ell^{\prime}z}(z^{\prime}-d_{\ell^{\prime}})},\\
b_{\ell\ell^{\prime}}= &  \sigma_{\ell\ell^{\prime}}^{\uparrow\downarrow
}(k_{\rho})e^{\ri k_{\ell^{\prime}z}(d_{\ell^{\prime}-1}-z^{\prime})}%
+\sigma_{\ell\ell^{\prime}}^{\uparrow\uparrow}(k_{\rho})e^{\ri k_{\ell
^{\prime}z}(z^{\prime}-d_{\ell^{\prime}})},
\end{split}
\end{equation}
where
\begin{equation}%
\begin{split}
\sigma_{\ell\ell^{\prime}}^{\downarrow\downarrow}(k_{\rho}) &  =\frac{k_{\ell
z}}{2k_{\ell^{\prime}z}}(A_{\ell\ell^{\prime}}^{1}-B_{\ell\ell^{\prime}}%
^{1}),\quad\sigma_{\ell\ell^{\prime}}^{\downarrow\uparrow}(k_{\rho}%
)=\frac{k_{\ell z}}{2k_{\ell^{\prime}z}}(A_{\ell\ell^{\prime}}^{2}-B_{\ell
\ell^{\prime}}^{2}),\\
\sigma_{\ell\ell^{\prime}}^{\uparrow\downarrow}(k_{\rho}) &  =\frac{k_{\ell
z}}{2k_{\ell^{\prime}z}}(A_{\ell\ell^{\prime}}^{1}+B_{\ell\ell^{\prime}}%
^{1}),\quad\sigma_{\ell\ell^{\prime}}^{\uparrow\uparrow}(k_{\rho}%
)=\frac{k_{\ell z}}{2k_{\ell^{\prime}z}}(A_{\ell\ell^{\prime}}^{2}+B_{\ell
\ell^{\prime}}^{2}).
\end{split}
\end{equation}
Therefore, taking inverse Fourier transform in \eqref{updownspectraldoamin}
gives expression of Green's function in the physical domain using Sommerfeld
integrals as follows:
\begin{equation}%
\begin{cases}
\displaystyle u_{\ell\ell^{\prime}}^{\uparrow}%
(\boldsymbol{r},\boldsymbol{r}^{\prime})=\frac{\ri}{4\pi}\int_{0}^{\infty
}k_{\rho}J_{0}(k_{\rho}\rho)\frac{e^{\ri k_{\ell z}(z-d_{\ell})}}{k_{\ell z}%
}\tilde{\sigma}_{\ell\ell^{\prime}}^{\uparrow}(k_{\rho},z^{\prime})dk_{\rho
},\quad\ell<L,\\[8pt]%
\displaystyle u_{\ell\ell^{\prime}}^{\downarrow}%
(\boldsymbol{r},\boldsymbol{r}^{\prime})=\frac{\ri}{4\pi}\int_{0}^{\infty
}k_{\rho}J_{0}(k_{\rho}\rho)\frac{e^{-\ri k_{\ell z}(z-d_{\ell})}}{k_{\ell z}%
}\tilde{\sigma}_{\ell\ell^{\prime}}^{\downarrow}(k_{\rho},z^{\prime})dk_{\rho
},\quad0<\ell<L,\\[8pt]%
\displaystyle u_{L\ell^{\prime}}^{\downarrow}%
(\boldsymbol{r},\boldsymbol{r}^{\prime})=\frac{\ri}{4\pi}\int_{0}^{\infty
}k_{\rho}J_{0}(k_{\rho}\rho)\frac{e^{-\ri k_{\ell z}z}}{k_{\ell z}}%
\tilde{\sigma}_{L\ell^{\prime}}^{\downarrow}(k_{\rho},z^{\prime})dk_{\rho},
\end{cases}
\label{greenfuncomponent}%
\end{equation}
where
\begin{equation}%
\begin{cases}
\displaystyle\tilde{\sigma}_{\ell0}^{\uparrow}(k_{\rho},z^{\prime})=e^{\ri
k_{0z}z^{\prime}}\sigma_{\ell0}^{\uparrow\uparrow}(k_{\rho
}),\\[8pt]%
\displaystyle\tilde{\sigma}_{\ell\ell^{\prime}}^{\uparrow}(k_{\rho},z^{\prime
})=e^{\ri k_{\ell^{\prime}z}(z^{\prime}-d_{\ell^{\prime}})}\sigma_{\ell
\ell^{\prime}}^{\uparrow\uparrow}(k_{\rho})+e^{-\ri k_{\ell^{\prime}%
z}(z^{\prime}-d_{\ell^{\prime}-1})}\sigma_{\ell\ell^{\prime}}^{\uparrow
\downarrow}(k_{\rho}),\quad0<\ell^{\prime}<L,\\[8pt]%
\displaystyle\tilde{\sigma}_{\ell\ell^{\prime}}^{\downarrow}(k_{\rho
},z^{\prime})=e^{\ri k_{\ell^{\prime}z}(z^{\prime}-d_{\ell^{\prime}})}%
\sigma_{\ell\ell^{\prime}}^{\downarrow\uparrow}(k_{\rho})+e^{-\ri
k_{\ell^{\prime}z}(z^{\prime}-d_{\ell^{\prime}-1})}\sigma_{\ell\ell^{\prime}%
}^{\downarrow\downarrow}(k_{\rho}),\quad0<\ell^{\prime}<L,\\[8pt]%
\displaystyle\tilde{\sigma}_{\ell L}^{\downarrow}(k_{\rho},z^{\prime})=e^{-\ri
k_{\ell^{\prime}z}(z^{\prime}-d_{L-1})}\sigma_{\ell L}^{\downarrow\downarrow
}(k_{\rho}).
\end{cases}
\label{totaldensity}%
\end{equation}
Note that the Green's function in the interior layers are given by
\[
u_{\ell\ell^{\prime}}(\boldsymbol{r},\boldsymbol{r}^{\prime})=%
\begin{cases}
\displaystyle u_{\ell\ell^{\prime}}^{\uparrow}%
(\boldsymbol{r},\boldsymbol{r}^{\prime})+u_{\ell\ell^{\prime}}^{\downarrow
}(\boldsymbol{r},\boldsymbol{r}^{\prime}), & \ell\neq\ell^{\prime},\\
\displaystyle u_{\ell\ell^{\prime}}^{\uparrow}%
(\boldsymbol{r},\boldsymbol{r}^{\prime})+u_{\ell\ell^{\prime}}^{\downarrow
}(\boldsymbol{r},\boldsymbol{r}^{\prime})+\frac{\ri k_{\ell^{\prime}}}{4\pi
}h_{0}^{(1)}(k_{\ell^{\prime}}|\boldsymbol{r}-\boldsymbol{r}^{\prime}|), &
\ell=\ell^{\prime}.
\end{cases}
\]

The derivation above is applicable to multi-layered media, in Appendix A, we
give explicit formulas (see \eqref{densitytwolayer1}, \eqref{densitytwolayer2}
and \eqref{densitythreelayer1},\eqref{densitythreelayer2},
\eqref{densitythreelayer3}) for the cases of two and three layers for
numerical tests of the fast algorithms as these cases cover a wide range of applications.

\section{First Taylor-expansion based FMM in multi-layered media}


\subsection{Free space}

First, we briefly review the TE-FMM for Helmholtz equations in the free space.
Consider $N$ source particles with source strength $q_{j}$ placed at
$\boldsymbol{r}_{j}=(x_{j},y_{j},z_{j})$. The field at $\boldsymbol{r}_{i}%
=(x_{i},y_{i},z_{i})$ due to all other sources is given by
\begin{equation}
u(\boldsymbol{r}_{i})=\sum\limits_{j=1}^{N}q_{j}h_{0}^{(1)}%
(k|\boldsymbol{r}_{i}-\boldsymbol{r}_{j}|),\quad i=1,2,\cdots,N,
\end{equation}
where $h_{0}^{(1)}(z)$ is the first kind spherical Hankel function of order
zero. Hereafter, we omit the factor $\frac{\ri k}{4\pi}$ and $\frac
{\ri k_{\ell}}{4\pi}$ in the free space and layered media Green's functions,
respectively. A TE-FMM will use the following Taylor expansions :

\begin{itemize}
\item \textbf{TE in a source box centered at $\boldsymbol{r}_{c}$: }we
have\textbf{ }
\begin{equation}
\sum\limits_{j\in J_{m}}q_{j}h_{0}^{(1)}(k|\boldsymbol{r}-\boldsymbol{r}_{j}%
|)\approx\sum\limits_{|\boldsymbol{k}|=0}^{p}\alpha_{\boldsymbol{k}}%
\frac{D_{\boldsymbol{r}^{\prime}}^{\boldsymbol{k}}h_{0}^{(1)}(k\Vert
{\boldsymbol{r}}-{\boldsymbol{r}}_{c}\Vert)}{\boldsymbol{k}!}%
,\label{taylorexpfarxcfree}%
\end{equation}
where
\begin{equation}
{\alpha}_{\boldsymbol{k}}=\sum\limits_{j\in J_{m}}q_{j}(\boldsymbol{r}_{j}%
-\boldsymbol{r}_{c})^{\boldsymbol{k}},\quad D_{\boldsymbol{r}^{\prime}%
}^{\boldsymbol{k}}:=\frac{\partial^{|\boldsymbol{k}|}}{\partial(x^{\prime
})^{k_{1}}\partial(y^{\prime})^{k_{2}}\partial(z^{\prime})^{k_{3}}},
\end{equation}
$J_{m}$ is the set of indices of particles in a source box centered at
$\boldsymbol{r}_{c}$ and the $\boldsymbol{r}$ is far from this box.

\item \textbf{TE in a target box centered at $\boldsymbol{r}_{c}^{l}$: }we
have\textbf{ }
\begin{equation}
\sum\limits_{j\in J_{m}}q_{j}h_{0}^{(1)}(k|\boldsymbol{r}-\boldsymbol{r}_{j}%
|)\approx\sum\limits_{|\boldsymbol{k}|=0}^{p}\beta_{\boldsymbol{k}}%
(\boldsymbol{r}-\boldsymbol{r}_{c}^{l})^{\boldsymbol{k}}%
,\label{taylorexplocxclfree}%
\end{equation}
where
\begin{equation}
{\beta}_{\boldsymbol{k}}=\sum\limits_{j\in J_{m}}q_{j}\frac{D_{\boldsymbol{r}}%
^{\boldsymbol{k}}h_{0}^{(1)}(k(\boldsymbol{r}_{c}^{l}-\boldsymbol{r}_{j}%
))}{\boldsymbol{k}!},\quad D_{\boldsymbol{r}}^{\boldsymbol{k}}:=\frac
{\partial^{|\boldsymbol{k}|}}{\partial x^{k_{1}}\partial y^{k_{2}}\partial
z^{k_{3}}},\label{localexpcoeff}%
\end{equation}
$\{(q_{j},\boldsymbol{r}_{j})\}_{j\in J_{m}}$ are particles in a source box
far from the target box.
\end{itemize}

\bigskip

Next, we present the translation operators.

\begin{itemize}
\item \textbf{Translation from a TE in a source box centered at
$\boldsymbol{r}_{c}$ to a TE in a target box centered at $\boldsymbol{r}_{c}%
^{l}$:}
\begin{equation}
{\beta}_{\boldsymbol{k}}\approx\sum\limits_{|\boldsymbol{k}^{\prime}|=0}%
^{p}{\alpha}_{\boldsymbol{k}^{\prime}}L_{\boldsymbol{k}}%
^{\boldsymbol{k}^{\prime}},
\end{equation}
where
\begin{equation}
L_{\boldsymbol{k}}^{\boldsymbol{k}^{\prime}}=\frac{D_{\boldsymbol{r}}%
^{\boldsymbol{k}}D_{\boldsymbol{r}^{\prime}}^{\boldsymbol{k}^{\prime}}%
h_{0}^{(1)}(k(\boldsymbol{r}_{c}^{l}-\boldsymbol{r}_{c}))}%
{\boldsymbol{k}!\boldsymbol{k}^{\prime}!}=(-1)^{|\boldsymbol{k}|}%
\frac{D_{\boldsymbol{r}^{\prime}}^{\boldsymbol{k}+\boldsymbol{k}^{\prime}%
}h_{0}^{(1)}(k(\boldsymbol{r}_{c}^{l}-\boldsymbol{r}_{c}))}%
{\boldsymbol{k}!\boldsymbol{k}^{\prime}!}.
\end{equation}

\item \textbf{Trnaslation from a TE in a source box centered at
$\boldsymbol{r}_{c}$ to a TE in another source box centered at
$\boldsymbol{r}_{c}^{\prime}$: } Let
\[
{\gamma}_{\boldsymbol{k}}=\sum\limits_{j\in J_{m}}q_{j}(\boldsymbol{r}_{j}%
-\boldsymbol{r}_{c}^{\prime})^{\boldsymbol{k}},
\]
be the coefficients of TE in the source box centered at $\boldsymbol{r}_{c}%
^{\prime}$, then the bi-nominal formula
\begin{equation}
(\boldsymbol{r}_{j}-\boldsymbol{r}_{c}^{\prime})^{\boldsymbol{k}}%
=\sum\limits_{k_{1}^{\prime}=0}^{k_{1}}\sum\limits_{k_{2}^{\prime}=0}^{k_{2}%
}\sum\limits_{k_{3}^{\prime}=0}^{k_{3}}B_{\boldsymbol{k}}%
^{\boldsymbol{k}^{\prime}}(\boldsymbol{r}_{j}-\boldsymbol{r}_{c}%
)^{\boldsymbol{k}^{\prime}},\label{LETOLEmat}%
\end{equation}
gives that
\begin{equation}
\gamma_{\boldsymbol{k}}=\sum\limits_{k_{1}^{\prime}=0}^{k_{1}}\sum
\limits_{k_{2}^{\prime}=0}^{k_{2}}\sum\limits_{k_{3}^{\prime}=0}^{k_{3}%
}B_{\boldsymbol{k}}^{\boldsymbol{k}^{\prime}}{\alpha}_{\boldsymbol{k}^{\prime
}},\quad\boldsymbol{k}^{\prime}=(k_{1}^{\prime},k_{2}^{\prime},k_{3}^{\prime
}),\label{TETartoTETar}%
\end{equation}
where
\begin{equation}
B_{\boldsymbol{k}}^{\boldsymbol{k}^{\prime}}=\frac
{\boldsymbol{k}!(\boldsymbol{r}_{c}-\boldsymbol{r}_{c}^{\prime}%
)^{\boldsymbol{k}-\boldsymbol{k}^{\prime}}}{k_{1}^{\prime}!(k_{1}%
-k_{1}^{\prime})!k_{2}^{\prime}!(k_{2}-k_2^{\prime})!k_{3}^{\prime}!(k_{3}%
-k_{3}^{\prime})!}.
\end{equation}

\item \textbf{Translation from a TE in a target box centered at
$\boldsymbol{r}_{c}^{l}$ to a TE in another target box centered at
$\tilde{\boldsymbol{r}}_{c}^{l}$:} Let
\begin{equation}
{\lambda}_{\boldsymbol{k}}=\sum\limits_{j\in J_{m}}\frac{q_{j}%
D_{\boldsymbol{r}}^{\boldsymbol{k}}h_{0}^{(1)}(k(\tilde{\boldsymbol{r}}%
_{c}^{l}-\boldsymbol{r}_{j}))}{\boldsymbol{k}!},
\end{equation}
be the coefficients of a TE in the target box centered at $\tilde
{\boldsymbol{r}}_{c}^{l}$. Then, the Taylor expansion at $\boldsymbol{r}_{c}%
^{l}$ gives
\begin{equation}
{\lambda}_{\boldsymbol{k}}\approx\frac{1}{\boldsymbol{k}!}\sum
\limits_{|\boldsymbol{k}^{\prime}|=0}^{p}\beta_{\boldsymbol{k}^{\prime}%
}D_{\tilde{\boldsymbol{r}}_{c}^{l}}^{\boldsymbol{k}}(\tilde{\boldsymbol{r}}%
_{c}^{l}-\boldsymbol{r}_{c}^{l})^{\boldsymbol{k}^{\prime}}.
\end{equation}
Note that
\begin{equation}
D_{\tilde{\boldsymbol{r}}_{c}^{l}}^{\boldsymbol{k}}(\tilde{\boldsymbol{r}}%
_{c}^{l}-\boldsymbol{r}_{c}^{l})^{\boldsymbol{k}^{\prime}}=%
\begin{cases}
\displaystyle0,\quad k_{1}>k_{1}^{\prime}\;\;\mathrm{or}\;\;k_{2}%
>k_{2}^{\prime}\;\;\mathrm{or}\;\;k_{3}>k_{3}^{\prime},\\[6pt]%
\displaystyle\frac{\boldsymbol{k}^{\prime}!}{(\boldsymbol{k}^{\prime
}-\boldsymbol{k})!}(\tilde{\boldsymbol{r}}_{c}^{l}-\boldsymbol{r}_{c}%
^{l})^{\boldsymbol{k}^{\prime}-\boldsymbol{k}},\quad\mathrm{otherwise},
\end{cases}
\label{LE2LErans}%
\end{equation}
then
\begin{equation}
{\lambda}_{\boldsymbol{k}}=\sum\limits_{n^{\prime}=|\boldsymbol{k}|}^{p}%
\sum\limits_{\boldsymbol{k}^{\prime}\geq\boldsymbol{k}}%
^{|\boldsymbol{k}^{\prime}|\leq n^{\prime}}\beta_{\boldsymbol{k}^{\prime}%
}\frac{\boldsymbol{k}^{\prime}!}{\boldsymbol{k}!(\boldsymbol{k}^{\prime
}-\boldsymbol{k})!}(\tilde{\boldsymbol{r}}_{c}^{l}-\boldsymbol{r}_{c}%
^{l})^{\boldsymbol{k}^{\prime}-\boldsymbol{k}}.\label{freeletole}%
\end{equation}

\end{itemize}

\subsection{Multi-layered media}

Let $\mathscr{P}_{\ell}=\{(Q_{\ell j},\boldsymbol{r}_{\ell j}),j=1,2,\cdots
,N_{\ell}\}$ be a group of source particles distributed in the $\ell$-th layer
of a multi-layered medium with $L+1$ layers (see Fig. \ref{layerstructure}).
The interactions between all $N:=N_{0}+N_{1}+\cdots+N_{L}$ particles given by
the sum
\begin{equation}
\Phi_{\ell}(\boldsymbol{r}_{\ell i})=\Phi_{\ell}^{free}(\boldsymbol{r}_{\ell
i})+\sum\limits_{\ell^{\prime}=0}^{L}[\Phi_{\ell\ell^{\prime}}^{\uparrow
}(\boldsymbol{r}_{\ell i})+\Phi_{\ell\ell^{\prime}}^{\downarrow}%
(\boldsymbol{r}_{\ell i})],\label{totalinteraction}%
\end{equation}
for $\ell=0,1,\cdots,L;\;\;i=1,2,\cdots,N_{\ell}$, where
\begin{equation}%
\begin{split}
&  \Phi_{\ell}^{free}(\boldsymbol{r}_{\ell i}):=\sum\limits_{j=1,j\neq
i}^{N_{\ell}}Q_{\ell j}h_{0}^{(1)}(k_{\ell}|\boldsymbol{r}_{\ell
i}-\boldsymbol{r}_{\ell j}|),\\
&  \Phi_{\ell\ell^{\prime}}^{\uparrow}(\boldsymbol{r}_{\ell i}):=\sum
\limits_{j=1}^{N_{\ell^{\prime}}}Q_{\ell^{\prime}j}u_{\ell\ell^{\prime}%
}^{\uparrow}(\boldsymbol{r}_{\ell i},\boldsymbol{r}_{\ell^{\prime}j}%
),\quad\Phi_{\ell\ell^{\prime}}^{\downarrow}(\boldsymbol{r}_{\ell i}%
):=\sum\limits_{j=1}^{N_{\ell^{\prime}}}Q_{\ell^{\prime}j}u_{\ell\ell^{\prime
}}^{\downarrow}(\boldsymbol{r}_{\ell i},\boldsymbol{r}_{\ell^{\prime}j})).
\end{split}
\end{equation}
Here $u_{\ell\ell^{\prime}}^{\uparrow},u_{\ell\ell^{\prime}}^{\downarrow}$ are
the general scattering component of the domain Green's function in the $\ell
$-th layer due to a source $\boldsymbol{r}_{\ell^{\prime}j}$ in the
$\ell^{\prime}$-th layer. We also omit the factor $\frac{\ri k_{\ell}}{4\pi}$
in $u_{\ell\ell^{\prime}}^{\uparrow}$ and $u_{\ell\ell^{\prime}}^{\downarrow}$
for consistency with the free space case. In the top and bottom most layer, we
have
\[
u_{0\ell^{\prime}}^{\downarrow}(\boldsymbol{r},\boldsymbol{r}^{\prime
})=0,\quad u_{L\ell^{\prime}}^{\uparrow}(\boldsymbol{r},\boldsymbol{r}^{\prime
})=0,\quad0\leq\ell^{\prime}\leq L.
\]
General formulas for $u_{\ell\ell^{\prime}}^{\uparrow},u_{\ell\ell^{\prime}%
}^{\downarrow}$ are given in \eqref{greenfuncomponent}-\eqref{totaldensity}
while densities for two and three layered cases are presented in the Appendix
A (see expressions in \eqref{densitytwolayer1}, \eqref{densitytwolayer2} and
\eqref{densitythreelayer1},\eqref{densitythreelayer2}, \eqref{densitythreelayer3}).

Since the domain Green's function in multi-layer media has different
representations \eqref{greenfuncomponent} for source and target particles in
different layers, it is necessary to perform calculation individually for
interactions between any two groups of particles among the $L+1$ groups
$\{\mathscr{P}_{\ell}\}_{\ell=0}^{L}$. Without a loss of generality, let us
focus on the computation of upgoing component of the interaction between
$\ell$-th and $\ell^{\prime}$-th groups, i.e.,
\begin{equation}
\Phi_{\ell\ell^{\prime}}^{\uparrow}(\boldsymbol{r}_{\ell i})=\sum
\limits_{j=1}^{N_{\ell^{\prime}}}Q_{\ell^{\prime}j}u_{\ell\ell^{\prime}%
}^{\uparrow}(\boldsymbol{r}_{\ell i},\boldsymbol{r}_{\ell^{\prime}j}),\quad
i=1,2,\cdots,N_{\ell}.\label{generalsum}%
\end{equation}
Let
\begin{equation}
\Phi_{\ell\ell^{\prime}}^{b\uparrow}(\boldsymbol{r}_{\ell i}):=\sum
\limits_{j\in J_{m}}Q_{\ell^{\prime}j}u_{\ell\ell^{\prime}}^{\uparrow
}(\boldsymbol{r}_{\ell i},\boldsymbol{r}_{\ell^{\prime}j}%
),\label{generalsuminbox}%
\end{equation}
be the field at $\boldsymbol{r}_{\ell i}$ generated by the particles in a
source box centered at $\boldsymbol{r}_{c}=(x_{c},y_{c},z_{c})$ in the tree
structure. Here, $J_{m}$ is the set of indices of particles in the source box. The Taylor expansion based FMM for \eqref{generalsum} will use TE
approximations
\begin{equation}
\Phi_{\ell\ell^{\prime}}^{b\uparrow}(\boldsymbol{r}_{\ell i})\approx
\sum_{|\boldsymbol{k}|=0}^{p}\alpha_{\boldsymbol{k}}\frac
{D_{\boldsymbol{r}^{\prime}}^{\boldsymbol{k}}u_{\ell\ell^{\prime}}^{\uparrow
}(\boldsymbol{r}_{\ell i},\boldsymbol{r}_{c})}{\boldsymbol{k}!},\quad{\alpha
}_{\boldsymbol{k}}=\sum\limits_{j\in J_{m}}Q_{\ell^{\prime}j}%
(\boldsymbol{r}_{\ell^{\prime}j}-\boldsymbol{r}_{c})^{\boldsymbol{k}}%
,\label{taylorexpfar3layers}%
\end{equation}
in the source box and
\begin{equation}
\Phi_{\ell\ell^{\prime}}^{b\uparrow}(\boldsymbol{r}_{\ell i})\approx
\sum\limits_{|\boldsymbol{k}|=0}^{p}\beta_{\boldsymbol{k}}%
(\boldsymbol{r}_{\ell i}-\boldsymbol{r}_{c}^{l})^{\boldsymbol{k}},\quad{\beta
}_{\boldsymbol{k}}=\sum\limits_{j\in J_{m}}\frac{Q_{\ell^{\prime}%
j}D_{\boldsymbol{r}}^{\boldsymbol{k}}u_{\ell\ell^{\prime}}^{\uparrow
}(\boldsymbol{r}_{c}^{l},\boldsymbol{r}_{\ell^{\prime}j})}{\boldsymbol{k}!}%
,\label{taylorexploc2ld}%
\end{equation}
in the target box centered at $\boldsymbol{r}_{c}^{l}=(x_{c}^{l},y_{c}%
^{l},z_{c}^{l})$, respectively. Note that $u_{\ell\ell^{\prime}}^{\uparrow
}(\boldsymbol{r},\boldsymbol{r}^{\prime})$ has a Sommerfeld integral
representation with an integrand involving an exponential function $e^{\ri
k_{\ell z}(z-d_{\ell})}\tilde{\sigma}_{\ell\ell^{\prime}}^{\uparrow}(k_{\rho
},z^{\prime})$. It is worthy to point out that the integrand has an
exponential decay when $d_{\ell}<z<d_{\ell}-1,d_{\ell^{\prime}}<z^{\prime
}<d_{\ell^{\prime}}-1,$ which ensures the convergence of the Sommerfeld
integral.

According to the Taylor expansions \eqref{taylorexpfar3layers} and
\eqref{taylorexploc2ld}, we conclude that the translation operators for center
shifting from source boxes to their parents and from target boxes to their
children are exactly the same as in free space case which are given by
\eqref{TETartoTETar} and \eqref{freeletole}. The translation from a TE in
source box to a TE in target box is given by
\begin{equation}
{\beta}_{\boldsymbol{k}}\approx\sum\limits_{|\boldsymbol{k}^{\prime}|=0}%
^{p}{\alpha}_{\boldsymbol{k}^{\prime}}\frac{D_{\boldsymbol{r}}%
^{\boldsymbol{k}}D_{\boldsymbol{r}^{\prime}}^{\boldsymbol{k}^{\prime}}%
u_{\ell\ell^{\prime}}^{\uparrow}(\boldsymbol{r}_{c}^{l},\boldsymbol{r}_{c}%
)}{\boldsymbol{k}!\boldsymbol{k}^{\prime}!}.\label{tesourceboxtotargetbox}%
\end{equation}
In the next section, an efficient algorithm for the computation of
$D_{\boldsymbol{r}}^{\boldsymbol{k}}D_{\boldsymbol{r}^{\prime}}%
^{\boldsymbol{k}^{\prime}}u_{\ell\ell^{\prime}}^{\uparrow}(\boldsymbol{r}_{c}%
^{l},\boldsymbol{r}_{c})$ will be presented.

\subsection{Discrete complex-image approximation of derivatives of Green's
functions for layered media}

The TE-FMM demands an efficient algorithm for the computation of derivatives
of Green's function. For free space case, recurrence formulas are available
(cf. \cite{li2009cartesian, tausch2003fast}). In layered media, the following
derivatives are needed
\begin{align}
\frac{D_{\boldsymbol{r}^{\prime}}^{\boldsymbol{k}}u_{\ell\ell^{\prime}%
}^{\uparrow}(\boldsymbol{r},\boldsymbol{r}^{\prime})}{\boldsymbol{k}!}  &
=\frac{1}{\boldsymbol{k}!k_{\ell}}D_{\boldsymbol{r}^{\prime}}^{\boldsymbol{k}}%
\int_{0}^{\infty}k_{\rho}J_{0}(k_{\rho}\rho)\frac{e^{\ri k_{\ell z}(z-d_{\ell
})}}{k_{\ell z}}\tilde{\sigma}_{\ell\ell^{\prime}}^{\uparrow}(k_{\rho
},z^{\prime})dk_{\rho},\label{SIs1}\\
\frac{D_{\boldsymbol{r}}^{\boldsymbol{k}}D_{\boldsymbol{r}^{\prime}%
}^{\boldsymbol{k}^{\prime}}u_{\ell\ell^{\prime}}^{\uparrow}%
(\boldsymbol{r},\boldsymbol{r}^{\prime})}%
{\boldsymbol{k}!\boldsymbol{k}^{\prime}!}  & =\frac{1}%
{\boldsymbol{k}!\boldsymbol{k}^{\prime}!k_{\ell}}D_{\boldsymbol{r}}%
^{\boldsymbol{k}}D_{\boldsymbol{r}^{\prime}}^{\boldsymbol{k}^{\prime}}\int
_{0}^{\infty}k_{\rho}J_{0}(k_{\rho}\rho)\frac{e^{\ri k_{\ell z}(z-d_{\ell})}%
}{k_{\ell z}}\tilde{\sigma}_{\ell\ell^{\prime}}^{\uparrow}(k_{\rho},z^{\prime
})dk_{\rho},\label{SIs2}%
\end{align}
where $\boldsymbol{k}=(k_{1},k_{2},k_{3}),\boldsymbol{k}^{\prime}%
=(k_{1}^{\prime},k_{2}^{\prime},k_{3}^{\prime})$ are multi-indices. They are
derivatives of a function represented in terms of Sommerfeld integral (SI). It
is well known that SI has oscillatory integrand with pole singularities due to
the existence of surface waves. Over the past decades, much effort has been
made on the computation of this integral, including using ideas from
high-frequency asymptotics, rational approximation, contour deformation (cf.
\cite{cai2013computational,cai2000fast,cho2012parallel,okhmatovski2004evaluation,paulus2000accurate}%
), complex images (cf.
\cite{fang1988discrete,paulus2000accurate,ochmann2004complex,alparslan2010closed}%
), and methods based on special functions (cf. \cite{koh2006exact}) or
physical images (cf.
\cite{li1996near,ling2000discrete,o2014efficient,lai2016new}).

Since \eqref{SIs1} is just a special case of \eqref{SIs2}, our discussion will
only focus on the latter. This integral is convergent when the target and
source particles are not exactly on the interfaces of a layered medium.
Contour deformation with high order quadrature rules could be used for direct
numerical computation. However, this becomes prohibitively expensive due to a
large number of derivatives needed in the FMM. In fact, $O(p^{6})$ derivatives
will be needed for each source box to target box translation. Moreover, the
involved integrand decays more and more slowly as the derivative order is
getting higher. The length of contour needs to be very long to obtain a
required accuracy for the computation of high order derivatives. Therefore,
putting all derivatives inside the integral and then applying quadratures with
contour deformation is too expensive in terms of CPU time.

Moreover, despite that $u_{\ell\ell^{\prime}}^{\uparrow}$ is a function of
$(\rho,z,z^{\prime})$ only, the derivative $D_{\boldsymbol{r}}%
^{\boldsymbol{k}}D_{\boldsymbol{r}^{\prime}}^{\boldsymbol{k}^{\prime}}%
u_{\ell\ell^{\prime}}^{\uparrow}(\boldsymbol{r},\boldsymbol{r}^{\prime})$
depends on all coordinates in ${\boldsymbol{r}}$ and $\boldsymbol{r}^{\prime}$
due to the nonsymmetric derivative, it is not feasible to make a precomputed
table on a fine grid and then use interpolation to obtain approximation for
the derivative $D_{\boldsymbol{r}}^{\boldsymbol{k}}D_{\boldsymbol{r}^{\prime}%
}^{\boldsymbol{k}^{\prime}}u_{\ell\ell^{\prime}}^{\uparrow}%
(\boldsymbol{r},\boldsymbol{r}^{\prime})$. Instead, for this TE-FMM, we will
use a complex image approximation of the integrand to simplify the calculation
of the derivatives.


Exchanging the order of the derivative and the integral leads to
\begin{equation}
\frac{D_{\boldsymbol{r}}^{\boldsymbol{k}}D_{\boldsymbol{r}^{\prime}%
}^{\boldsymbol{k}^{\prime}}u_{\ell\ell^{\prime}}^{\uparrow}%
(\boldsymbol{r},\boldsymbol{r}^{\prime})}%
{\boldsymbol{k}!\boldsymbol{k}^{\prime}!}=\frac{D_{\boldsymbol{r}}%
^{\boldsymbol{k}}D_{\boldsymbol{r}^{\prime}}^{\boldsymbol{k}_{0}^{\prime}}%
}{\boldsymbol{k}!\boldsymbol{k}_{0}^{\prime}!}\Big(\frac{1}{k_{\ell}}\int
_{0}^{\infty}k_{\rho}J_{0}(k_{\rho}\rho)\frac{e^{\ri k_{\ell z}(z-d_{\ell})}%
}{k_{\ell z}}\frac{\partial_{z^{\prime}}^{k_{3}^{\prime}}\tilde{\sigma}%
_{\ell\ell^{\prime}}^{\uparrow}(k_{\rho},z^{\prime})}{k_{3}^{\prime}!}%
dk_{\rho}\Big),\label{integralformderi}%
\end{equation}
where $\boldsymbol{k}_{0}^{\prime}=(k_{1}^{\prime},k_{2}^{\prime},0)$ are
multi-indices reduced from $\boldsymbol{k}^{\prime}$,
\begin{equation}
\frac{\partial_{z^{\prime}}^{k_{3}^{\prime}}\tilde{\sigma}_{\ell\ell^{\prime}%
}^{\uparrow}(k_{\rho},z^{\prime})}{k_{3}^{\prime}!}=\frac{(\ri k_{\ell
^{\prime}z})^{k_{3}^{\prime}}\big[e^{\ri k_{\ell^{\prime}z}(z^{\prime}%
-d_{\ell^{\prime}})}\sigma_{\ell\ell^{\prime}}^{\uparrow\uparrow}%
+(-1)^{k_{3}^{\prime}}e^{\ri k_{\ell^{\prime}z}(d_{\ell^{\prime}-1}-z^{\prime
})}\sigma_{\ell\ell^{\prime}}^{\uparrow\downarrow}\big]}{k_{3}^{\prime}%
!},\label{layermediumdensity}%
\end{equation}
corresponds to the derivatives with respect to $z^{\prime}$. Note that the
variables $x,y,z,x^{\prime},y^{\prime}$ and $z^{\prime}$ are in separate
functions in the Sommerfeld integral \eqref{greenfuncomponent}. Let us first
consider the derivatives with respect to $z^{\prime}$. Recalling
\eqref{integralformderi}, we have
\begin{equation}
\frac{1}{k_{3}^{\prime}!}\partial_{z^{\prime}}^{k_{3}^{\prime}}u_{\ell
\ell^{\prime}}(\boldsymbol{r},\boldsymbol{r}^{\prime})=\frac{1}{k_{\ell}}%
\int_{0}^{\infty}k_{\rho}J_{0}(k_{\rho}\rho)\frac{e^{\ri k_{\ell z}(z-d_{\ell
})}}{k_{\ell z}}\frac{\partial_{z^{\prime}}^{k_{3}^{\prime}}\tilde{\sigma
}_{\ell\ell^{\prime}}^{\uparrow}(k_{\rho},z^{\prime})}{k_{3}^{\prime}%
!}dk_{\rho}.\label{m2lderivative}%
\end{equation}
The derivatives of $u_{\ell\ell^{\prime}}^{\uparrow}%
(\boldsymbol{r},\boldsymbol{r}^{\prime})$ with respect to $z^{\prime}$ are
represented by Sommerfeld integrals with densities $\frac{\partial_{z^{\prime
}}^{k_{3}^{\prime}}\tilde{\sigma}_{\ell\ell^{\prime}}^{\uparrow}(k_{\rho
},z^{\prime})}{k_{3}^{\prime}!}$. Now, we will use the discrete complex image
method (DCIM) (cf. \cite{aksun1996robust, alparslan2010closed}) to generate an
approximation using a sum of free space Green's function with complex
coordinates. To use the decay from $e^{\ri k_{\ell}(z-d_{\ell})}$, we define
\begin{equation}
\Theta_{\ell\ell^{\prime}}^{k_{3}^{\prime}}(k_{\rho},z^{\prime})=\frac{e^{\ri
k_{\ell z}(z_{min}-d_{\ell})}\partial_{z^{\prime}}^{k_{3}^{\prime}}%
\tilde{\sigma}_{\ell\ell^{\prime}}^{\uparrow}(k_{\rho},z^{\prime})}%
{k_{3}^{\prime}!}.
\end{equation}
Here, we choose $z_{min}$ to be the minimum $z$ coordinates of all target
particles in $\ell$-th layer, so the remaining term $e^{\ri k_{\ell
z}(z-z_{min})}$ still decays as $k_{\rho}\rightarrow\infty$. A two level DCIM
method is used to approximate $\Theta_{\ell\ell^{\prime}}^{k_{3}^{\prime}%
}(k_{\rho},z^{\prime})$ as follows:

\begin{itemize}
\item[Step 1:] Sample density function $\Theta_{\ell\ell^{\prime}}%
^{k_{3}^{\prime}}(k_{\rho}, z^{\prime})$ over a path defined by the following
mappings (see Fig. \ref{twolevelcontour})
\begin{equation}%
\begin{split}
k_{\ell z}=\ri k_{\ell}(T_{0}+t),\quad0\leq t\leq T_{1},\quad C_{ap1}:
1^{\mathrm{st}}\;\; \mathrm{level},\\
k_{\ell z}=k_{\ell} \Big(1-\frac{t}{T_{0}}+\ri t\Big),\quad0\leq t\leq
T_{0},\quad C_{ap2}: 2^{\mathrm{nd}}\;\; \mathrm{level}.
\end{split}
\end{equation}

\item[Step 2:] Approximate the sampled $\Theta_{\ell\ell^{\prime}}%
^{k_{3}^{\prime}}(k_{\rho},z^{\prime})$ by summation of complex exponentials
as
\begin{equation}
\Theta_{\ell\ell^{\prime}}^{k_{3}^{\prime}}(k_{\rho},z^{\prime})\approx
\sum\limits_{j=1}^{M}A_{j}^{k_{3}^{\prime}}e^{-\ri k_{\ell z}Z_{j}%
^{k_{3}^{\prime}}},\label{compleximageapp}%
\end{equation}
using a generalized pencil-of-function method (GPOF) \cite{hua1989generalized}.

\item[Step 3:] Then, we have
\begin{equation}%
\begin{split}
\frac{\partial_{z^{\prime}}^{k_{3}^{\prime}}u_{\ell\ell^{\prime}%
}(\boldsymbol{r},\boldsymbol{r}^{\prime})}{k_{3}^{\prime}!}= &  \frac
{1}{k_{\ell}}\int_{0}^{\infty}k_{\rho}J_{0}(k_{\rho}\rho)\frac{e^{\ri k_{\ell
z}(z-z_{min})}}{k_{\ell z}}\Theta_{\ell\ell^{\prime}}^{k_{3}^{\prime}}%
(k_{\rho},z^{\prime})dk_{\rho}\\
\approx &  \sum\limits_{j=1}^{M}A_{j}^{k_{3}^{\prime}}\Big(\frac{1}{k_{\ell}%
}\int_{0}^{\infty}k_{\rho}J_{0}(k_{\rho}\rho)\frac{e^{\ri k_{\ell z}%
(z-z_{min}-Z_{j}^{k_{3}^{\prime}})}}{k_{\ell z}}dk_{\rho}\Big),
\end{split}
\end{equation}
and by applying the Sommerfeld identity
\begin{equation}
h_{0}^{(1)}(k|\boldsymbol{r}|)=\frac{1}{k_{\ell}}\int_{0}^{\infty}k_{\rho
}J_{0}(k_{\rho}\rho)\frac{e^{\ri k_{\ell z}|z|}}{k_{\ell z}}dk_{\rho},
\end{equation}
to the SI with complex $z$-coordinates, we arrive at the following
approximations to the derivatives,
\begin{equation}
\frac{\partial_{z^{\prime}}^{k_{3}^{\prime}}u_{\ell\ell^{\prime}%
}(\boldsymbol{r},\boldsymbol{r}^{\prime})}{k_{3}^{\prime}!}\approx
\sum\limits_{j=1}^{M}A_{j}^{k_{3}^{\prime}}h_{0}^{(1)}(k_{\ell}R_{j}%
^{k_{3}^{\prime}}),\label{closedformgreen}%
\end{equation}
where $R_{j}^{k_{3}^{\prime}}=\sqrt{(x-x^{\prime})^{2}+(y-y^{\prime}%
)^{2}+(z-z_{min}-Z_{j}^{k_{3}^{\prime}})^{2}}$ is the complex distance.
\end{itemize}

\begin{figure}[ptbh]
\subfigure[Contour in $k_{\rho}$-plane]{\includegraphics[scale=0.3]{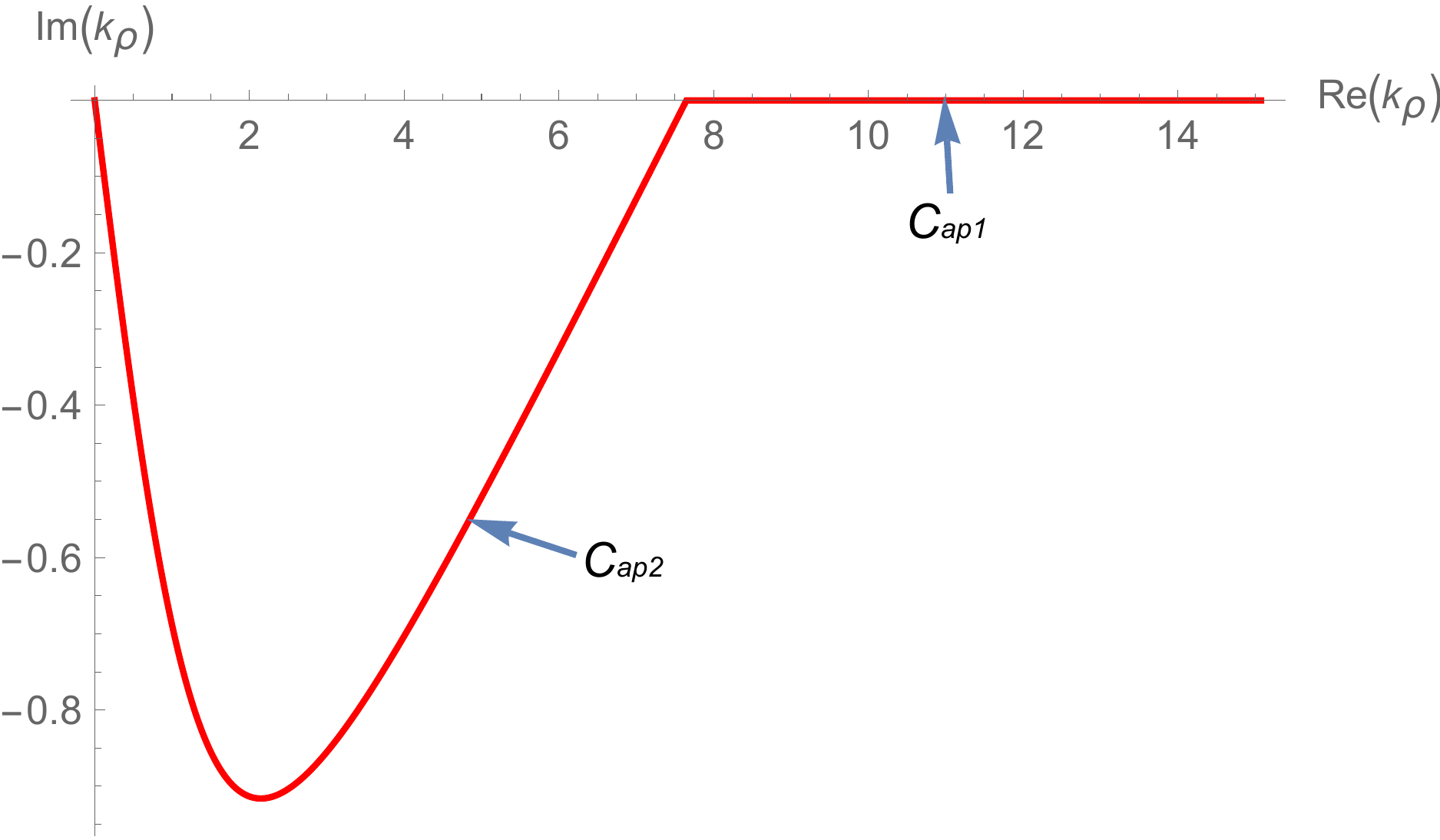}}\qquad
\subfigure[Contour in $k_{\ell z}$-plane]{\includegraphics[scale=0.3]{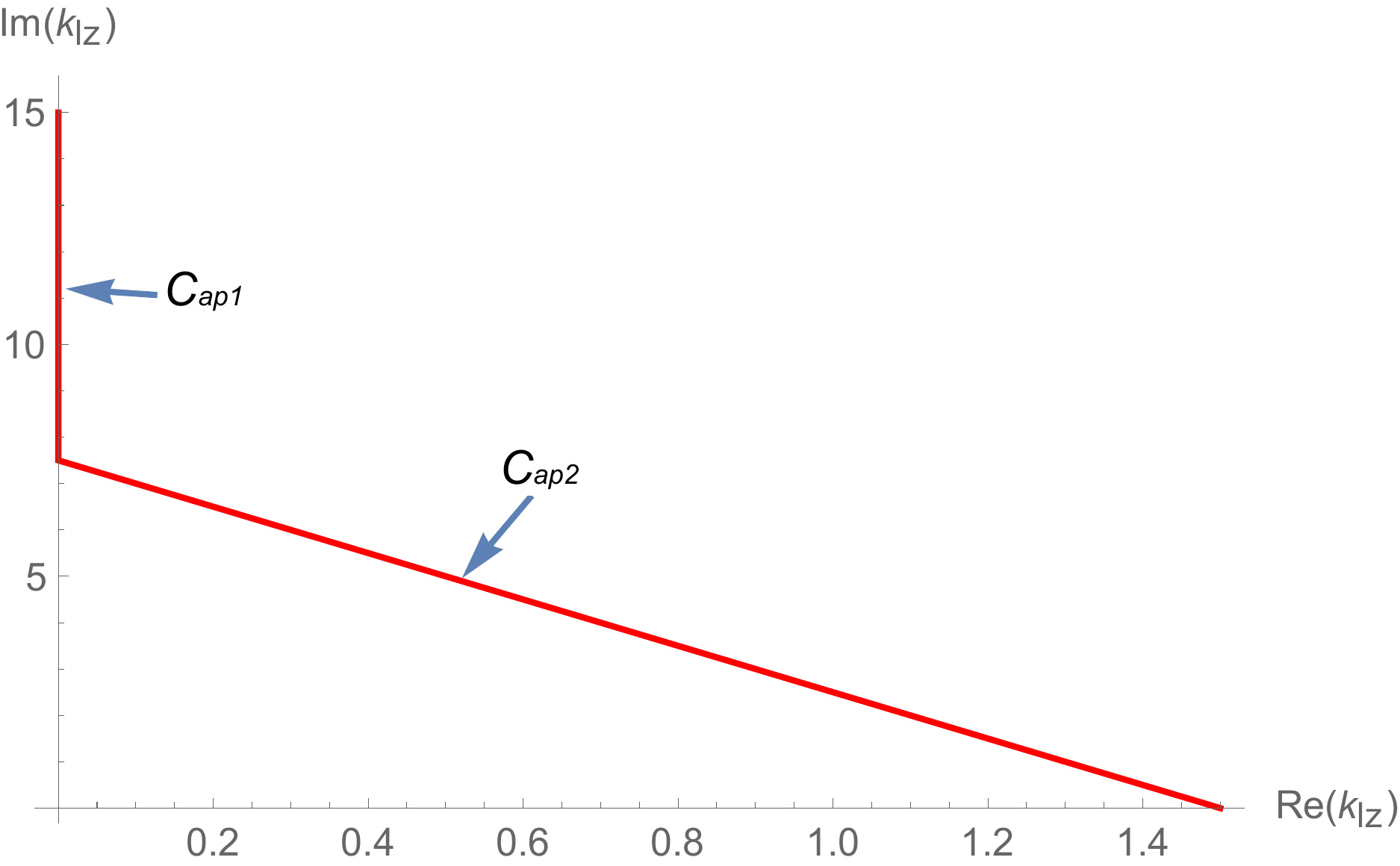}}
\caption{Plots of contour used in two level DCIM method ($k_{\ell}=1.5$,
$T_{0}=5$).}%
\label{twolevelcontour}%
\end{figure}By taking derivative $D_{\boldsymbol{r}}^{\boldsymbol{k}}%
D_{\boldsymbol{r}^{\prime}}^{\boldsymbol{k}_{0}^{\prime}}$ directly on
\eqref{closedformgreen}, we obtain approximation
\begin{equation}
\frac{D_{\boldsymbol{r}}^{\boldsymbol{k}}D_{\boldsymbol{r}^{\prime}%
}^{\boldsymbol{k}^{\prime}}u_{\ell\ell^{\prime}}%
(\boldsymbol{r},\boldsymbol{r}^{\prime})}%
{\boldsymbol{k}!\boldsymbol{k}^{\prime}!}\approx\sum\limits_{j=1}^{M}%
A_{j}^{k_{3}^{\prime}}\frac{D_{\boldsymbol{r}}^{\boldsymbol{k}}%
D_{\boldsymbol{r}^{\prime}}^{\boldsymbol{k}_{0}^{\prime}}h_{0}^{(1)}(k_{\ell
}R_{j}^{k_{3}^{\prime}})}{\boldsymbol{k}!\boldsymbol{k}_{0}^{\prime}%
!}\label{derivativeapp}%
\end{equation}
Note that the approximation \eqref{compleximageapp} is independent of
$x,y,z,x^{\prime},y^{\prime}$. Approximation \eqref{derivativeapp} is expected
to maintain the accuracy of the approximation \eqref{compleximageapp}.
Numerical results verify this fact at the end of this section. More
importantly, the derivatives on Hankel function with complex coordinates can
be calculated by using a recurrence formula.

Define
\begin{equation}
a^{\boldsymbol{k}}(\boldsymbol{r},\boldsymbol{r}^{\prime},k):=\frac
{1}{\boldsymbol{k}!}D_{\boldsymbol{r}}^{\boldsymbol{k}}h_{0}^{(1)}%
(k|\boldsymbol{r}-\boldsymbol{r}^{\prime}|),\quad b^{\boldsymbol{k}}%
(\boldsymbol{r},\boldsymbol{r}^{\prime},k)=\frac{1}{\boldsymbol{k}!}%
D_{\boldsymbol{r}}^{\boldsymbol{k}}\psi,
\end{equation}
where $\boldsymbol{r}^{\prime}=(x^{\prime},y^{\prime},Z^{\prime})$ is a
coordinate with complex $z$-coordinate $Z^{\prime}$. Then, we have the
following recurrence formula
\begin{equation}%
\begin{split}
|\boldsymbol{k}||\boldsymbol{r}-\boldsymbol{r}^{\prime}|^{2}a^{\boldsymbol{k}}
&  +2(|\boldsymbol{k}|-1)\sum\limits_{i=1}^{3}%
(\boldsymbol{r}-\boldsymbol{r}^{\prime})_{i}%
a^{\boldsymbol{k}-\boldsymbol{e}_{i}}+(|\boldsymbol{k}|+1)\sum\limits_{i=1}%
^{3}a^{\boldsymbol{k}-2\boldsymbol{e}_{i}}\\
= &  \ri k\Big(\sum\limits_{i=1}^{3}(\boldsymbol{r}-\boldsymbol{r}^{\prime
})_{i}b^{\boldsymbol{k}-\boldsymbol{e}_{i}}+\sum\limits_{i=1}^{2}%
b^{\boldsymbol{k}-2\boldsymbol{e}_{i}}\Big),\\
|\boldsymbol{k}|b^{\boldsymbol{k}}= &  \ri k\Big(\sum\limits_{i=1}%
^{3}(\boldsymbol{r}-\boldsymbol{r}^{\prime})_{i}%
a^{\boldsymbol{k}-\boldsymbol{e}_{i}}+\sum\limits_{i=1}^{3}%
a^{\boldsymbol{k}-2\boldsymbol{e}_{i}}\Big).
\end{split}
\label{recurrsion}%
\end{equation}
The derivation can be done by simply following the procedure in
\cite{li2009cartesian}, since the involved derivatives are independent of the
complex coordinate $z^{\prime}$. With this recurrence formula, derivatives
\begin{equation}
D_{\boldsymbol{r}}^{\boldsymbol{k}}D_{\boldsymbol{r}^{\prime}}%
^{\boldsymbol{k}_{0}^{\prime}}h_{0}^{(1)}(k_{\ell}R_{j}^{k_{3}^{\prime}%
})=(-1)^{|\boldsymbol{k}_{0}^{\prime}|}D_{\boldsymbol{r}}%
^{\boldsymbol{k}+\boldsymbol{k}_{0}^{\prime}}h_{0}^{(1)}(k_{\ell}R_{j}%
^{k_{3}^{\prime}}),
\end{equation}
can be efficiently calculated.

In the free space TE-FMM, the most time consuming part is the translation from
a source box to a target box where a recurrence formula is used to calculate
all $O(p^{6})$ derivatives. Note that $D_{\boldsymbol{r}}^{\boldsymbol{k}}%
D_{\boldsymbol{r}^{\prime}}^{\boldsymbol{k}^{\prime}}u_{\ell\ell^{\prime}%
}^{\uparrow}(\boldsymbol{r}_{c}^{l},\boldsymbol{r}_{c})$ only depends on the
center of the corresponding boxes. More precisely, the density
\eqref{layermediumdensity} approximated by DCIM only depends on the
$z$-coordinates of the center of boxes in the source tree. Once the tree
structure is fixed, we can pre-compute a table for all complex exponential
approximations used for the computation of $D_{\boldsymbol{r}}%
^{\boldsymbol{k}}D_{\boldsymbol{r}^{\prime}}^{\boldsymbol{k}^{\prime}}%
u_{\ell\ell^{\prime}}^{\uparrow}(\boldsymbol{r}_{c}^{l},\boldsymbol{r}_{c})$.
Assume that the depth of the source tree is $H$, then only $2^{H}(p+1)$ DCIM
approximations are needed to be precomputed.


Next, we will give some numerical results to show the accuracy of the two
level DCIM and show that taking derivative with respect to $x,y,z,x^{\prime
},y^{\prime}$ will not result in an accuracy loss. For this purpose, let us
consider the approximation of
\begin{equation}
\Big(\frac{\partial}{\partial x}+\ri\frac{\partial}{\partial y}\Big)^{s}%
\Big(\frac{\partial}{\partial z}\Big)^{k_{3}}\Big(\frac{\partial}{\partial
z^{\prime}}\Big)^{k_{3}^{\prime}}\frac{u_{11}^{\uparrow}%
(\boldsymbol{r},\boldsymbol{r}^{\prime})}{s!k_{3}!k_{3}^{\prime}%
!},\label{testderi}%
\end{equation}
for three layers case with $k_{0}=0.8$, $k_{1}=1.5$, $k_{2}=2.0$, $d=2.0$. In
the two level DCIM approximation, we set $T_{0}=\sqrt{\Big(\frac{k_{2}%
+0.8}{k_{1}}\Big)^{2}-1},T_{1}=10$ and use $101$ sample points in each level.
\begin{table}[ptbh]
\centering {\small
\begin{tabular}
[c]{|c|c|c|c|c|}\hline
& $(k_{3}, k_{3}^{\prime}, s)$ & direct quadrature & DCIM & error\\\hline
\multirow{5}{*}{\begin{tabular}{c} $\bs r=(0.5, 1.0, -0.5)$\\ $\bs r'=(0.3, 1.3, -0.5)$ \end{tabular}} &
(0, 0, 0) & 0.0636386627264339 & 0.063638662478093 & 2.4834e-10\\\cline{2-5}
& (3, 4, 0) & 0.00474777580070183 & 0.004747777003526 &
-1.2028e-09\\\cline{2-5}
& (8, 8, 0) & -7.40635683599036e-10 & -7.406632552555640e-10 &
2.7572e-14\\\cline{2-5}
& (0, 0, 4) & -1.77276908208051e-06 & -1.772752691103600e-06 &
-1.6391e-11\\\cline{2-5}
& (0, 0, 8) & 1.61980348471514e-11 & 1.619802703109298e-11 &
7.8161e-18\\\hline
\multirow{5}{*}{\begin{tabular}{c} $\bs r=(0.6, 0.3, -1.2)$\\ $\bs r'=(0.5, 1.0, -0.5)$ \end{tabular}} &
(0, 0, 0) & 0.0470021533117637 & 0.047002199376864 & -4.6065e-08\\\cline{2-5}
& (3, 4, 0) & 0.00185695910047338 & 0.001856957782404 &
1.3181e-09\\\cline{2-5}
& (8, 8, 0) & 5.85835080649916e-09 & 5.858364813170763e-09 &
-1.4007e-14\\\cline{2-5}
& (0, 0, 4) & -1.71372127668556e-05 & -1.713524114661759e-05 &
-1.9716e-9\\\cline{2-5}
& (0, 0, 8) & -1.26729956194435e-07 & -1.267386149809553e-07 &
8.6588e-12\\\hline
\end{tabular}
}\caption{Numerical results of $(k_{3},k_{3}^{\prime},s)-$derivatives in
\eqref{testderi} (real parts).}%
\label{Table:gpofapproximationrp}%
\end{table}\begin{table}[ht!]
\centering {\small
\begin{tabular}
[c]{|c|c|c|c|c|}\hline
& $(k_{3}, k_{3}^{\prime}, s)$ & direct quadrature & DCIM & error\\\hline
\multirow{5}{*}{\begin{tabular}{c} $\bs r=(0.5, 1.0, -0.5)$\\ $\bs r'=(0.3, 1.3, -0.5)$ \end{tabular}} &
(0, 0, 0) & 0.00236214962912961 & 0.002362151697708 & -2.0686e-09\\\cline{2-5}
& (3, 4, 0) & -0.00126663970537548 & - 0.001266638701878 &
-1.0035e-09\\\cline{2-5}
& (8, 8, 0) & 1.3083718652325e-06 & 1.308371795385077e-06 &
6.9847e-14\\\cline{2-5}
& (0, 0, 4) & -1.50190394931086e-06 & - 1.501883557929564e-06 &
-2.0391e-11\\\cline{2-5}
& (0, 0, 8) & -2.87922306729206e-13 & - 2.878701763048925e-13 &
-5.2130e-17\\\hline
\multirow{5}{*}{\begin{tabular}{c} $\bs r=(0.6, 0.3, -1.2)$\\ $\bs r'=(0.5, 1.0, -0.5)$ \end{tabular}} &
(0, 0, 0) & -0.0655662374392812 & - 0.065566216753017 &
-2.069e-08\\\cline{2-5}
& (3, 4, 0) & -0.00407200441147604 & - 0.004072001032057 &
-3.3794e-09\\\cline{2-5}
& (8, 8, 0) & -5.8052078071366e-05 & - 5.805409125522990e-05 &
2.0132e-09\\\cline{2-5}
& (0, 0, 4) & 0.000103591338132027 & 1.035831449609178e-04 &
8.1932e-09\\\cline{2-5}
& (0, 0, 8) & 5.90666673167792e-08 & 5.907018897856176e-08 &
-3.5217e-12\\\hline
\end{tabular}
}\caption{Numerical results of $(k_{3},k_{3}^{\prime},s)-$derivatives in
\eqref{testderi} (imaginary parts).}%
\label{Table:gpofapproximationip}%
\end{table}\begin{figure}[ht!]
\subfigure[$k_3'=0$]{\includegraphics[scale=0.25]{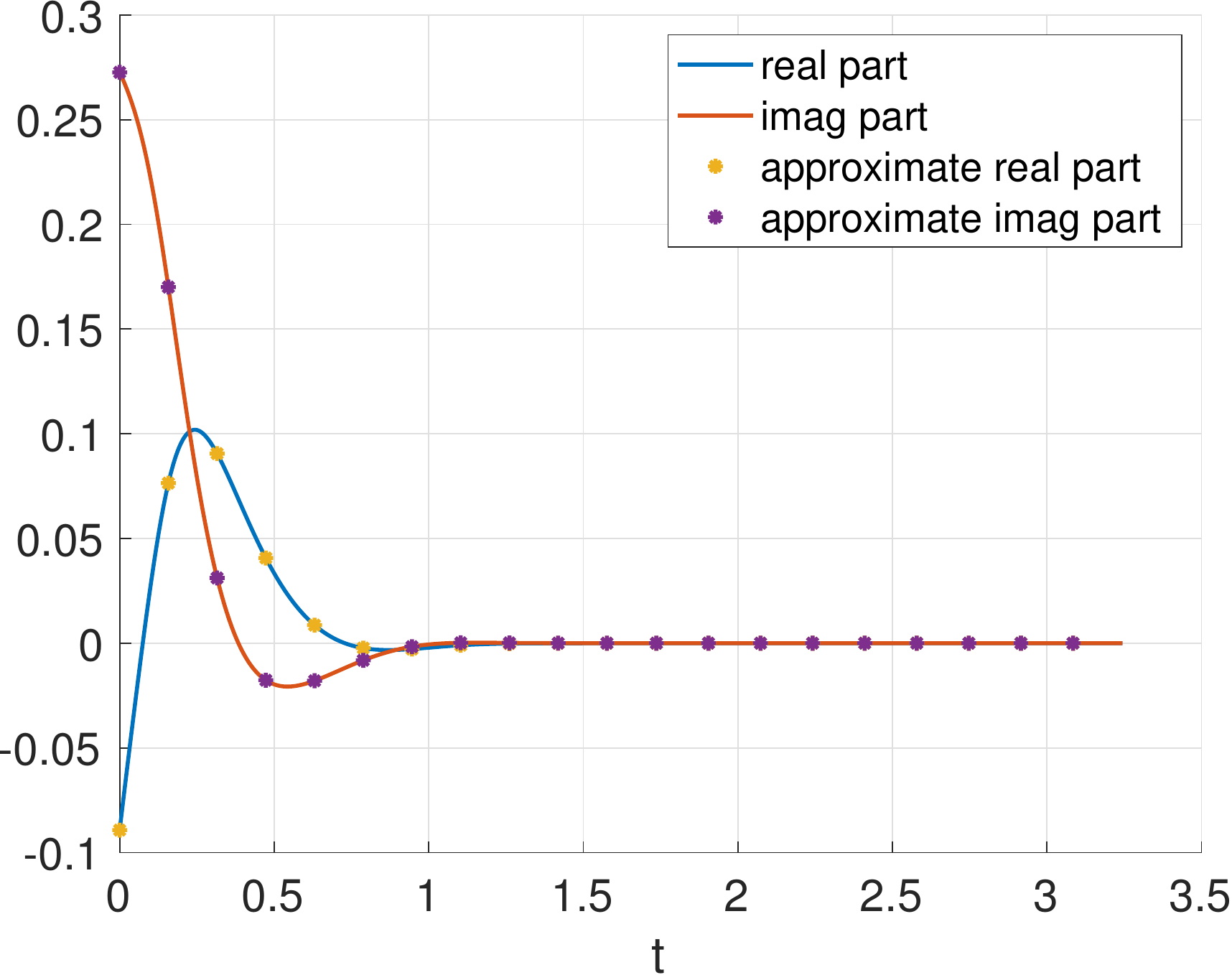}}\quad
\subfigure[$k_3'=4$]{\includegraphics[scale=0.25]{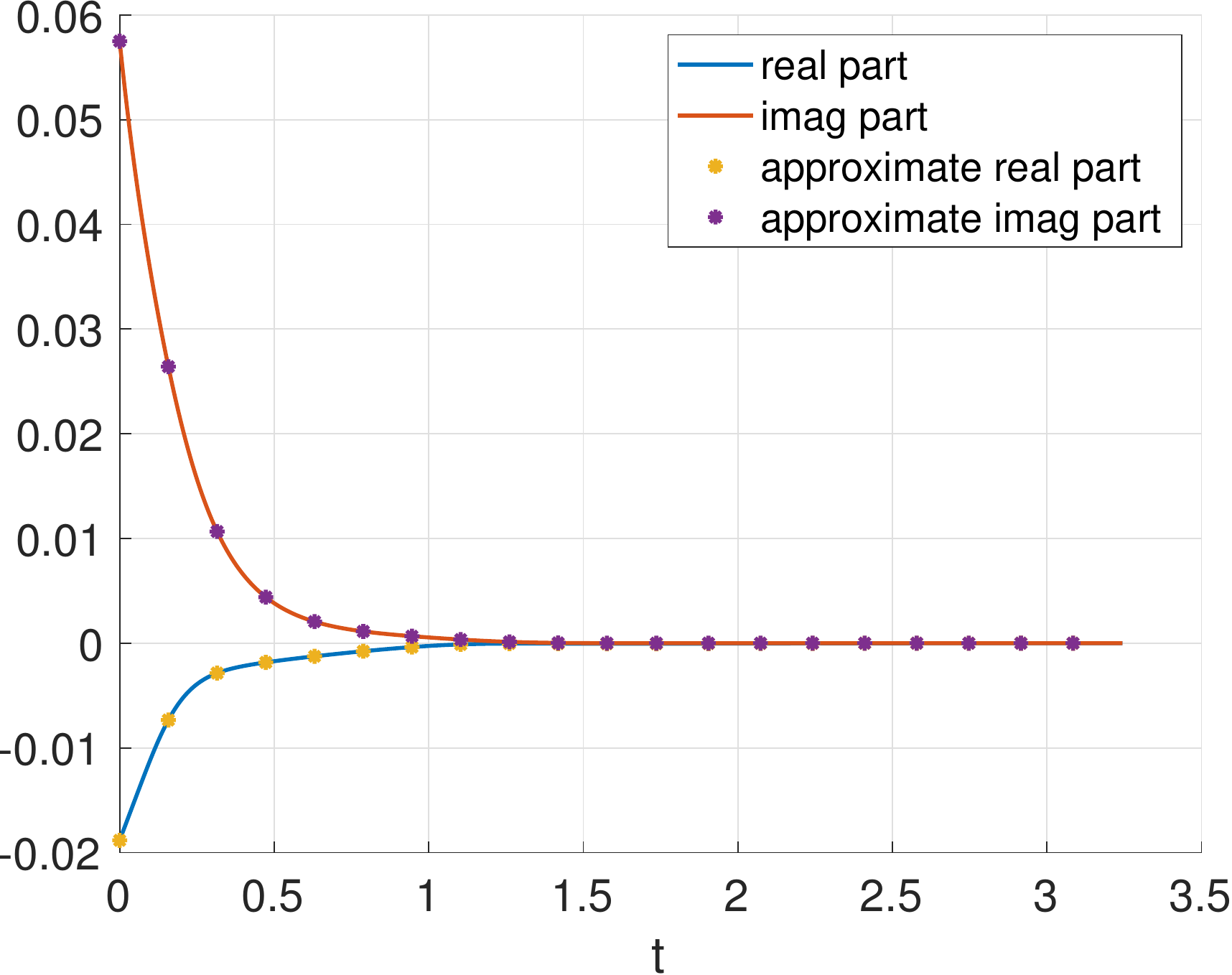}}\quad
\subfigure[$k_3'=8$]{\includegraphics[scale=0.25]{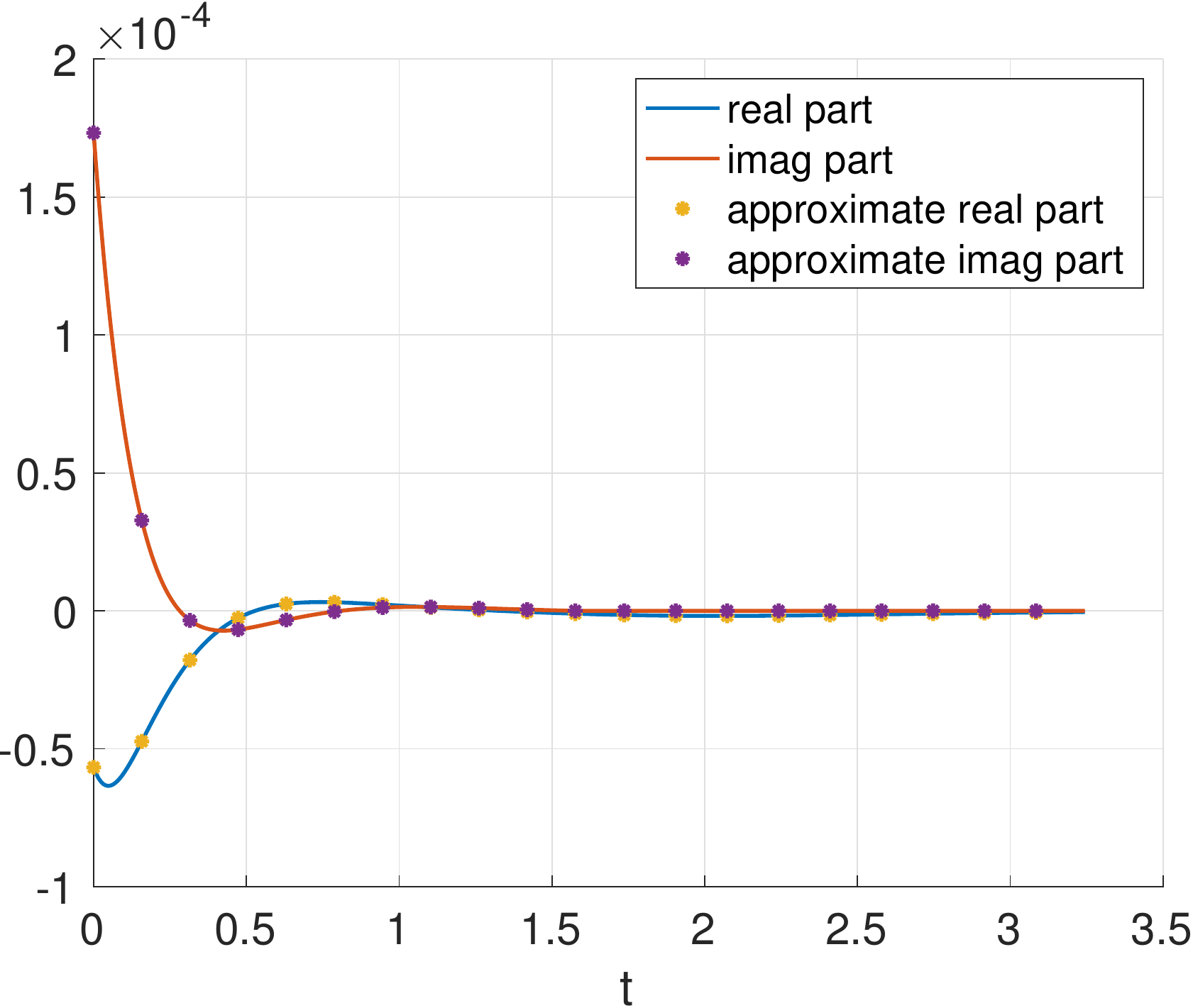}}
\subfigure[$k_3'=0$]{\includegraphics[scale=0.25]{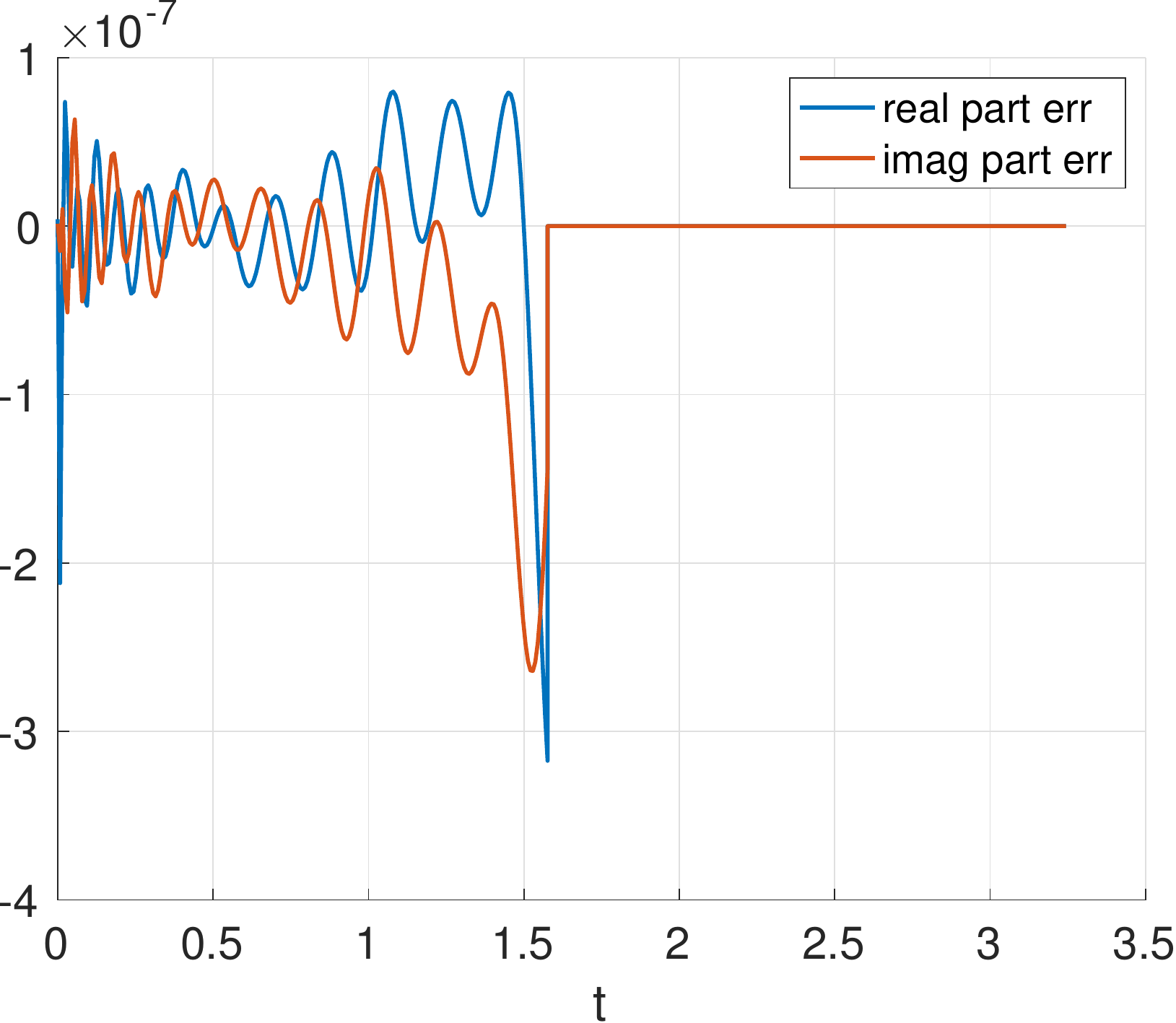}}\quad
\subfigure[$k_3'=4$]{\includegraphics[scale=0.25]{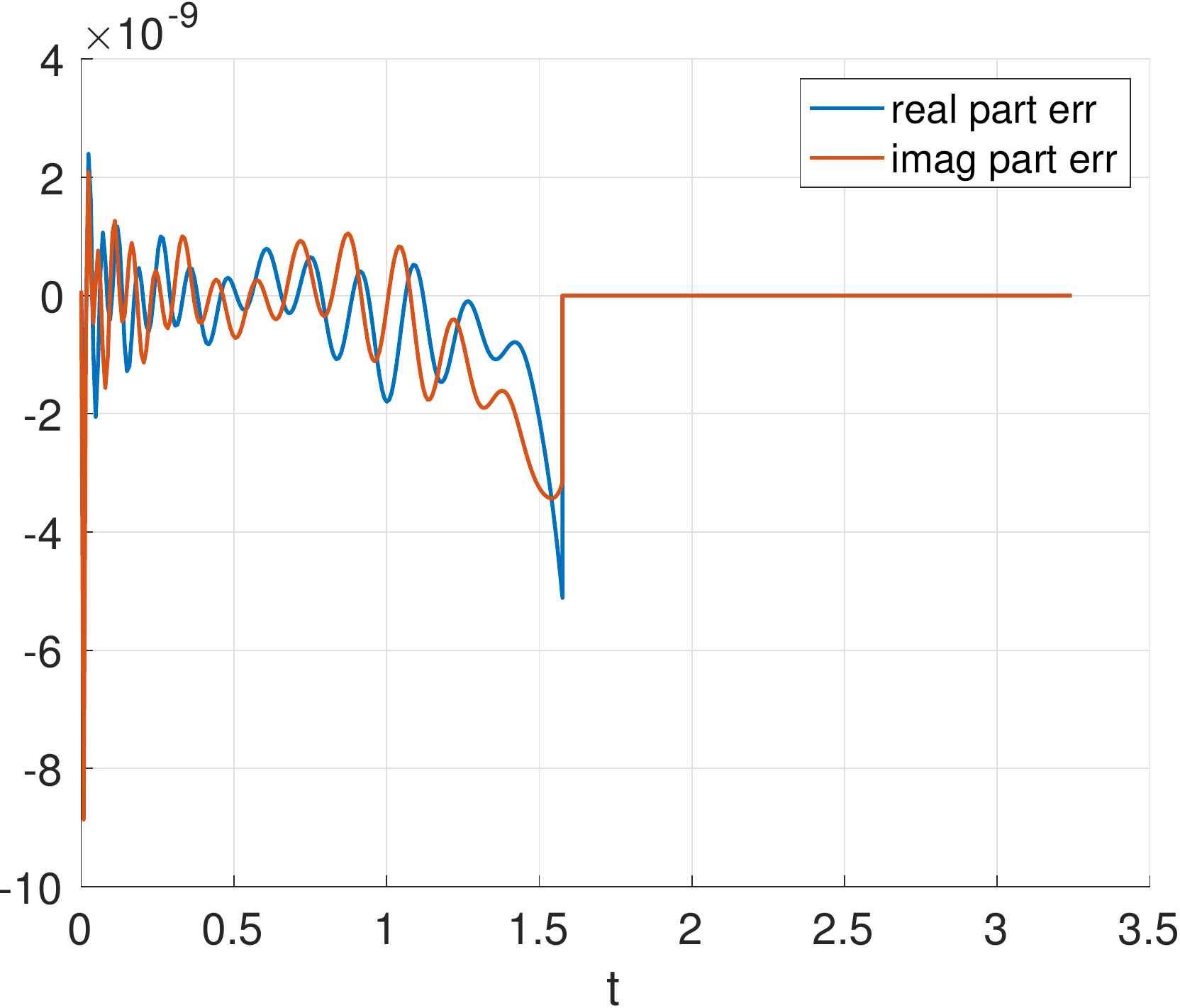}}\quad
\subfigure[$k_3'=8$]{\includegraphics[scale=0.25]{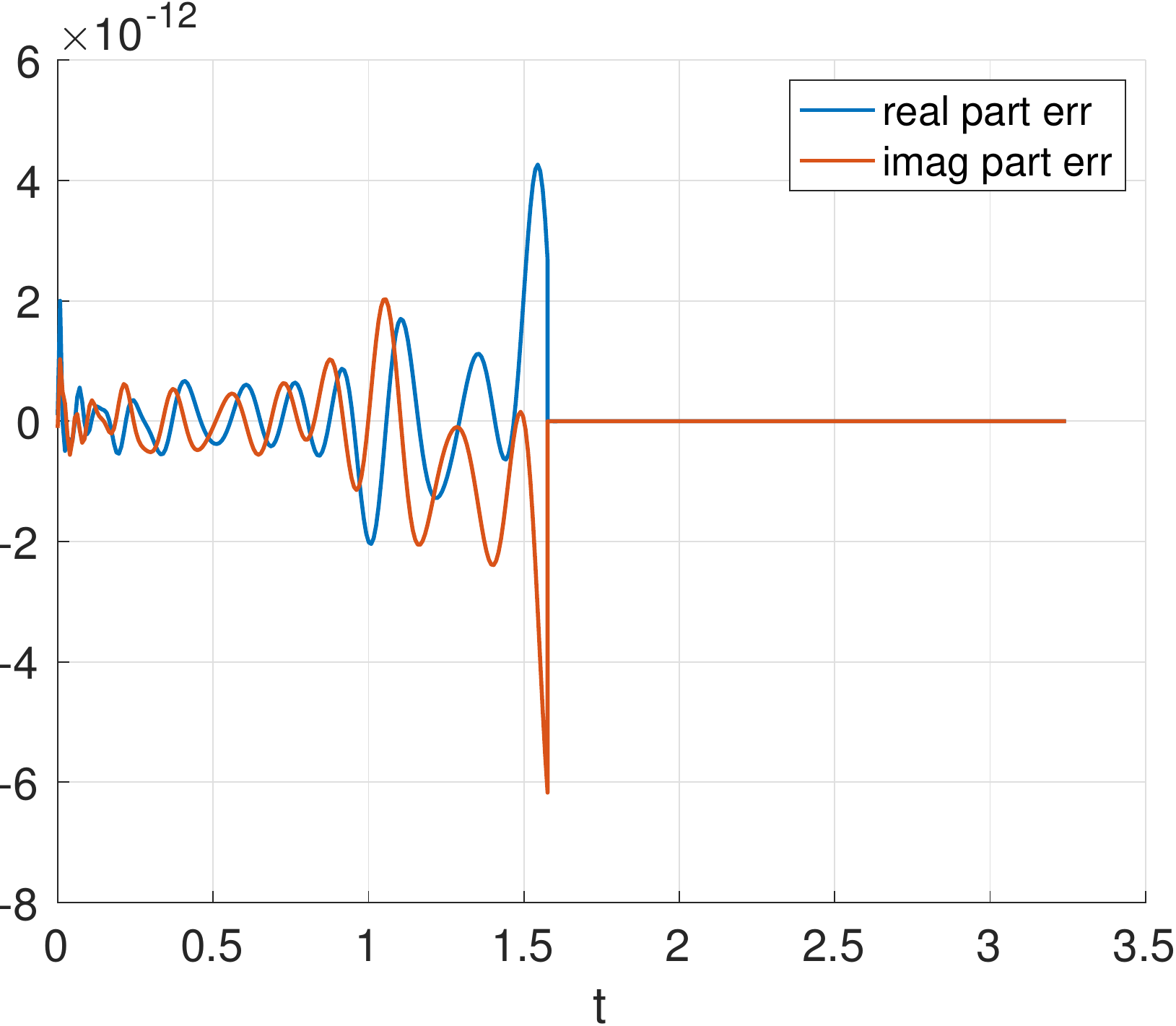}} \caption{Plots
of two level DCIM approximation for $\Theta_{11}^{k_{3}^{\prime}}(k_{\rho
},-0.5)$.}%
\label{gpofapproximation}%
\end{figure}Approximations of $\Theta_{11}^{k_{3}^{\prime}}(k_{\rho},-0.5)$
with $z_{min}=-1.5$ and corresponding errors for different order of
derivatives are depicted in Fig. \ref{gpofapproximation}. Numerical results
obtained by direct quadrature with contour deformation and DCIM approximation
are compared in Table \ref{Table:gpofapproximationrp}%
-\ref{Table:gpofapproximationip}. A large number of Gauss points is used for
the quadrature calculation so a machine accuracy is obtained to be used as
reference values. The numerical results presented in Table
\ref{Table:gpofapproximationrp}-\ref{Table:gpofapproximationip} show that DCIM
can produce approximation with high accuracy even for high order derivatives.
Taking derivative with respect to $x,y,z$ has no degeneracy on the accuracy.
Since derivatives with respect to $x^{\prime},y^{\prime}$ are just a sign
change of that with respect to $x,y$, it also shows that taking derivative
with respect to $x^{\prime},y^{\prime}$ will not degenerate the accuracy neither.

Now we can present two algorithms for the computation of general component
\eqref{generalsum} and total interaction \eqref{totalinteraction},
respectively. \begin{algorithm}\label{algorithm1}
	\caption{TEFMM-I for general component \eqref{generalsum}}
	\begin{algorithmic}
		\State Generate an adaptive hierarchical tree structure and precompute tables.
		\State{\bf Upward pass:}
		\For{$l=H \to 0$}
		\For{all boxes $j$ on source tree level $l$ }
		\If{$j$ is a leaf node}
		\State{form the free-space TE using Eq. \eqref{taylorexpfar3layers}.}
		\Else
		\State form the free-space TE by merging children's expansions using the free-space center shift translation operator \eqref{TETartoTETar}.
		\EndIf
		\EndFor
		\EndFor
		\State{\bf Downward pass:}
		\For{$l=1 \to H$}
		\For{all boxes $j$ on target tree level $l$ }
		\State shift the TE of $j$'s parent to $j$ itself using the free-space translation operator \eqref{freeletole}.
		\State collect interaction list contribution using the source box to target box translation operator in Eq. \eqref{tesourceboxtotargetbox} with  precomputed table for \eqref{compleximageapp} and recurrence formula \eqref{recurrsion}.
		\EndFor
		\EndFor
		\State {\bf Evaluate Local Expansions:}
		\For{each leaf node (childless box)}
		\State evaluate the local expansion at each particle location.
		\EndFor
		\State {\bf Local Direct Interactions:}
		\For{$i=1 \to N$ }
		\State compute Eq. \eqref{generalsum} of target particle $i$ in the neighboring boxes using precomputed table of $u_{\ell\ell'}^{\uparrow}(\bs r, \bs r')$.
		\EndFor
	\end{algorithmic}
\end{algorithm}\begin{algorithm}\label{algorithm2}
	\caption{Taylor expansion based heterogeneous 3-D FMM for \eqref{totalinteraction}}
	\begin{algorithmic}
		\For{$\ell=0 \to L$}
		\For{$\ell'=0 \to L$ }
		\If{$\ell=\ell'$}
		\State{use free space FMM to compute $\Phi_{\ell}^{free}$.}
		\EndIf
		\If{$\ell=0$}
		\State use {\bf Algorithm 1} to compute $\Phi_{0\ell'}^{\uparrow}$.
		\Else
		\If{$\ell=L$}
		\State use {\bf Algorithm 1} to compute $\Phi_{L\ell'}^{\downarrow}$.
		\Else
		\State use {\bf Algorithm 1} to compute $\Phi_{\ell\ell'}^{\uparrow}$.
		\State use {\bf Algorithm 1} to compute $\Phi_{\ell\ell'}^{\downarrow}$.
		\EndIf
		\EndIf
		\EndFor
		\EndFor
	\end{algorithmic}
\end{algorithm}

\section{Second Taylor-Expansion based FMM in multi-layered media}

As discussed in the last section, Taylor expansion based FMM in multi-layered
media depends on an efficient algorithm for the calculation of corresponding
Green's function and its derivatives. The algorithm using discrete complex
image and recurrence formula has good efficiency. However, discrete complex
image approximation may suffer stability problem in the calculation of high
order derivatives. Since the Green's
function in multi-layered media has a symmetry in the $x-y$ plane, it is worthy
to maintain this symmetry. For this purpose, we use notation $\mathcal H_k(\bs r, \bs r')=h_0^{(1)}(k|\bs r-\bs r'|)$ and introduce differential
operators
\begin{equation}%
\begin{split}
\mathscr{D}_{nm}^{s}= &  \Big(\frac{\partial}{\partial x}-\ri\frac{\partial
}{\partial y}\Big)^{s}\Big(\frac{\partial}{\partial x}+\ri\frac{\partial
}{\partial y}\Big)^{m-{s}}\Big(\frac{\partial}{\partial z}\Big)^{n-m},\\
\widehat{\mathscr{D}}_{nm}^{s}= &  \Big(\frac{\partial}{\partial x^{\prime}%
}-\ri\frac{\partial}{\partial y^{\prime}}\Big)^{s}\Big(\frac{\partial
}{\partial x^{\prime}}+\ri\frac{\partial}{\partial y^{\prime}}\Big)^{m-s}%
\Big(\frac{\partial}{\partial z^{\prime}}\Big)^{n-m}.
\end{split}
\label{sysdifferential}%
\end{equation}

\subsection{Free space}

We start with rearranging TE of $h_{0}^{(1)}%
(k|\boldsymbol{r}-\boldsymbol{r}^{\prime}|)$ by using operators
defined in \eqref{sysdifferential}.

\begin{theorem}
\label{taylorexptheorem} Suppose $|\boldsymbol{r}^{\prime}|\leq a$ for a given
small radius $a$, then the Taylor expansion of $h_{0}^{(1)}%
(k|\boldsymbol{r}-\boldsymbol{r}^{\prime}|)$ at origin with respect to
$\boldsymbol{r}^{\prime}$ is
\begin{equation}
\label{teatrc}h_{0}^{(1)}(k|\boldsymbol{r}-\boldsymbol{r}^{\prime}%
|)=\sum_{n=0}^{\infty}\sum_{m=0}^{n}\sum_{s =0}^{m}\tilde{\alpha}_{nm}%
^{s}(\boldsymbol{r}^{\prime})\frac{\widehat{\mathscr D}_{nm}^s\mathcal H_k(\bs r, \bs 0)}{2^{m}(n-m)!s!(m-s)!},
\end{equation}
where
\begin{equation}
\label{eqn:coefMExyz}\quad\tilde{\alpha}_{nm}^{s}(\boldsymbol{r}^{\prime
})=(x^{\prime}+\ri y^{\prime})^{s}(x^{\prime}-\ri y^{\prime})^{m-s}(z^{\prime
})^{n-m}.
\end{equation}

\end{theorem}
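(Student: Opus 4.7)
The plan is to reduce the stated identity to the ordinary multivariate Taylor expansion of $\mathcal H_k(\bs r, \bs r')$ in the Cartesian coordinates $(x', y', z')$ around $\bs r' = \bs 0$, and then to reorganize the triple sum via a linear change of variables to the ``Wirtinger'' coordinates $w' = x' + \ri y'$, $\bar w' = x' - \ri y'$. Since $\bs r' \mapsto h_0^{(1)}(k|\bs r - \bs r'|)$ is real-analytic on the open ball $|\bs r'| < |\bs r|$, for any $a < |\bs r|$ the series
$$h_0^{(1)}(k|\bs r - \bs r'|) = \sum_{k_1,k_2,k_3 \ge 0} \frac{(x')^{k_1}(y')^{k_2}(z')^{k_3}}{k_1!\,k_2!\,k_3!}\,\partial_{x'}^{k_1}\partial_{y'}^{k_2}\partial_{z'}^{k_3}\,\mathcal H_k(\bs r, \bs 0)$$
converges absolutely on $|\bs r'| \le a$. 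This is the starting point.

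Next I would introduce the Wirtinger operators $\partial_{w'} = \tfrac{1}{2}(\partial_{x'} - \ri\partial_{y'})$ and $\partial_{\bar w'} = \tfrac{1}{2}(\partial_{x'} + \ri\partial_{y'})$, and view the function as real-analytic in $(w', \bar w', z')$ via the invertible $\mathbb C$-linear substitution $x' = (w' + \bar w')/2$, $y' = (w' - \bar w')/(2\ri)$. The associated Taylor series in these new coordinates is
$$\sum_{s,t,\ell \ge 0} \frac{(w')^s(\bar w')^t(z')^\ell}{s!\,t!\,\ell!}\,\partial_{w'}^s \partial_{\bar w'}^t \partial_{z'}^\ell\,\mathcal H_k(\bs r, \bs 0),$$
which converges on a polydisk containing $|\bs r'| \le a$. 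Substituting $\partial_{w'}^s \partial_{\bar w'}^t = 2^{-(s+t)}(\partial_{x'} - \ri\partial_{y'})^s(\partial_{x'} + \ri\partial_{y'})^t$ and re-indexing by $m := s+t$ and $n := m + \ell$ produces the factor $2^m$, identifies the combined derivative with $\widehat{\mathscr D}_{nm}^s$ from \eqref{sysdifferential}, and groups the monomial prefactor into $\tilde\alpha_{nm}^s(\bs r') = (x'+\ri y')^s(x'-\ri y')^{m-s}(z')^{n-m}$ from \eqref{eqn:coefMExyz}, yielding exactly \eqref{teatrc}.

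The main obstacle is rigorously justifying the change-of-variables step, since $w'$ and $\bar w'$ are complex conjugates rather than genuinely independent variables. Two routes resolve this: (i) invoke absolute convergence of the Cartesian series on a polydisk, which permits arbitrary rearrangement and allows the substitution $x' = (w'+\bar w')/2$, $y' = (w'-\bar w')/(2\ri)$ to be performed term by term, with the resulting series matched to the Wirtinger expansion by uniqueness of power-series representations; or (ii) prove the purely algebraic identity
$$\sum_{k_1+k_2=m} \frac{(x')^{k_1}(y')^{k_2}}{k_1!\,k_2!}\,\partial_{x'}^{k_1}\partial_{y'}^{k_2} = \sum_{s=0}^{m} \frac{(x'+\ri y')^s(x'-\ri y')^{m-s}}{2^m\,s!\,(m-s)!}\,(\partial_{x'}-\ri\partial_{y'})^s(\partial_{x'}+\ri\partial_{y'})^{m-s}$$
(applied to any analytic function and evaluated at the origin) by expanding both $(x'\pm\ri y')^s$ and $(\partial_{x'}\mp\ri\partial_{y'})^s$ via the binomial theorem and matching the coefficient of $\partial_{x'}^{k_1}\partial_{y'}^{k_2}$ for each $(k_1,k_2)$ with $k_1+k_2=m$. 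Route (ii) is elementary and self-contained; route (i) is conceptually cleaner. Either, combined with the initial Cartesian Taylor expansion, completes the proof with no delicate estimates required.
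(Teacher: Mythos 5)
Your proposal is correct and, in substance, is the paper's own argument in Cartesian dress: the paper carries out the same recombination by writing $\boldsymbol{r}^{\prime}$ in spherical coordinates and using $\cos\beta\,\partial_{x}+\sin\beta\,\partial_{y}=\tfrac12\big[e^{\ri\beta}(\partial_{x}-\ri\partial_{y})+e^{-\ri\beta}(\partial_{x}+\ri\partial_{y})\big]$, which after multiplying by $\rho\sin\alpha$ is exactly your operator identity $x^{\prime}\partial_{x}+y^{\prime}\partial_{y}=\tfrac12\big[(x^{\prime}+\ri y^{\prime})(\partial_{x}-\ri\partial_{y})+(x^{\prime}-\ri y^{\prime})(\partial_{x}+\ri\partial_{y})\big]$ raised to the $m$-th power, i.e.\ your route (ii). The only other difference is cosmetic: the paper expands in the unprimed operators $\mathscr{D}_{nm}^{s}$ and converts at the end via $(-1)^{n}\mathscr{D}_{nm}^{s}\mathcal{H}_k(\boldsymbol{r},\boldsymbol{0})=\widehat{\mathscr{D}}_{nm}^{s}\mathcal{H}_k(\boldsymbol{r},\boldsymbol{0})$, whereas you work with primed derivatives throughout, and your elementary route (ii) sidesteps the conjugate-variable subtlety of route (i) entirely.
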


\begin{proof}
Denote the spherical coordinates of ${\boldsymbol{r}}^{\prime}$ and
${\boldsymbol{r}}$ as $(\rho,\alpha,\beta)$ and $(r,\theta,\varphi)$,
repsectively. Applying Taylor expansion on $h_{0}^{(1)}(k|{\boldsymbol{r}}%
-{\boldsymbol{r}}^{\prime}|)$ with respect to $\boldsymbol{r}^{\prime
}=(x^{\prime},y^{\prime},z^{\prime})$ at the origin and changing the
derivative with respect to $(x,y,z)$, we have
\begin{equation}%
\begin{split}
h_{0}^{(1)}(k|{\boldsymbol{r}}-{\boldsymbol{r}}^{\prime}|)= &
\mathcal H_k({\boldsymbol{r}}, \bs 0)+\sum_{n=1}^{\infty}\frac{(-1)^{n}%
}{n!}\left(  x^{\prime}\frac{\partial}{\partial x}+y^{\prime}\frac{\partial
}{\partial y}+z^{\prime}\frac{\partial}{\partial z}\right)  ^{n}\mathcal H_k({\boldsymbol{r}}, \bs 0)\\
= &\mathcal H_k({\boldsymbol{r}}, \bs 0)+\sum_{n=1}^{\infty}%
\frac{(-1)^{n}\rho^{n}}{n!}\sum_{m=0}^{n}\binom{n}{m}\sin^{m}\alpha\left(
\cos{\beta}\frac{\partial}{\partial x}+\sin{\beta}\frac{\partial}{\partial
y}\right)  ^{m}\\
&  \qquad\qquad\qquad\qquad\qquad\qquad\qquad\times\cos^{n-m}\alpha\left(
\frac{\partial}{\partial z}\right)  ^{n-m}\mathcal H_k({\boldsymbol{r}}, \bs 0).
\end{split}
\label{eqn:taylor}%
\end{equation}
Notice that for any function $f(z),z\in\mathbb{C}$, we have
\[%
\left(  \cos{\beta}\frac{\partial}{\partial x}+\sin{\beta}\frac{\partial
}{\partial y}\right)  f=\frac{1}{2}\Big[e^{\ri\beta}\left(  \frac{\partial
}{\partial x}-\ri\frac{\partial}{\partial y}\right)  +e^{-\ri\beta}\left(
\frac{\partial}{\partial x}+\ri\frac{\partial}{\partial y}\right)  \Big]f.
\]
Therefore, we can rewrite Eq. (\ref{eqn:taylor}) as
\begin{equation}%
\begin{split}
h_{0}^{(1)}(k|{\boldsymbol{r}}-{\boldsymbol{r}}^{\prime}|)= &  \mathcal H_k({\boldsymbol{r}}, \bs 0)+\sum_{n=1}^{\infty}\sum_{m=0}^{n}\frac
{(-1)^{n}\rho^{n}\sin^{m}\alpha\cos^{n-m}\alpha}{2^{m}(n-m)!}\\
&  \qquad\qquad\qquad\qquad\times\sum_{s=0}^{m}\frac{e^{-\ri(m-s)\beta}e^{\ri
s\beta}}{s!(m-s)!}\mathscr{D}_{nm}^{s}\mathcal H_k({\boldsymbol{r}}, \bs 0)\\
= & \mathcal H_k({\boldsymbol{r}}, \bs 0)+\sum_{n=1}^{\infty}\sum_{m=0}^{n}%
\sum_{s=0}^{m}\tilde{\alpha}_{nm}^{s}(\boldsymbol{r}^{\prime})\frac
{(-1)^{n}\mathscr{D}_{nm}^{s}\mathcal H_k({\boldsymbol{r}}, \bs 0)}{2^{m}(n-m)!s!(m-s)!},
\end{split}
\label{eqn:taylor1}%
\end{equation}
where
\begin{equation}%
\tilde{\alpha}_{nm}^{s}(\boldsymbol{r}^{\prime})=\rho^{n}e^{\ri s\beta
}e^{-\ri(m-s)\beta}\sin^{m}\alpha\cos^{n-m}\alpha=(x^{\prime}+\ri y^{\prime
})^{s}(x^{\prime}-\ri y^{\prime})^{m-s}(z^{\prime})^{n-m}.
\label{eqn:coef1}%
\end{equation}
We finish the proof by using the fact $(-1)^n\mathscr{D}_{nm}^{s}\mathcal H_k({\boldsymbol{r}}, \bs 0)=\widehat{\mathscr{D}}_{nm}^{s}\mathcal H_k({\boldsymbol{r}}, \bs 0)$ in \eqref{eqn:taylor1}.
\end{proof}

\begin{rem}
	The notation $\mathcal H_k({\boldsymbol{r}}, \bs r')$ is used to clearly show that the derivatives $\mathscr{D}_{nm}^{s}h_0^{(1)}(k|{\boldsymbol{r}}-\bs r'|)$ and  $\widehat{\mathscr{D}}_{nm}^{s}h_0^{(1)}(k|{\boldsymbol{r}}-\bs r'|)$ are not just depend on $|\bs r-\bs r'|$ but are functions of $(\bs r, \bs r')$.
\end{rem}

\begin{corollary}
\label{taylorexpcorollary} Suppose $|\boldsymbol{r}|\leq a$ for a given small
radius $a$, then the Taylor expansion of $h_{0}^{(1)}%
(k|\boldsymbol{r}-\boldsymbol{r}^{\prime}|)$ at the origin with respect to
$\boldsymbol{r}$ is
\begin{equation}
h_{0}^{(1)}(k|\boldsymbol{r}-\boldsymbol{r}^{\prime}|)=\sum_{n=0}^{\infty}%
\sum_{m=0}^{n}\sum_{s=0}^{m}{\beta}_{nm}^{s}\tilde{\alpha}_{nm}^{s}%
(\boldsymbol{r})\label{taylorexploc}%
\end{equation}
where
\begin{equation}
{\beta}_{nm}^{s}=\frac{\mathscr{D}_{nm}^{s}\mathcal H_k(\bs 0, \boldsymbol{r}^{\prime})}{2^{m}(n-m)!s!(m-s)!}.\label{eqn:coefM2L}%
\end{equation}

\end{corollary}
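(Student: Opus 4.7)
The plan is to reduce the corollary to Theorem~\ref{taylorexptheorem} by exploiting the symmetry $\mathcal{H}_k(\boldsymbol{r},\boldsymbol{r}')=h_0^{(1)}(k|\boldsymbol{r}-\boldsymbol{r}'|)=\mathcal{H}_k(\boldsymbol{r}',\boldsymbol{r})$. Swapping the two arguments turns a target-centered (source-variable) expansion into a source-centered (target-variable) expansion, so no new analytic content is needed, just a careful relabeling.

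Concretely, I would introduce tilded variables $\tilde{\boldsymbol{r}}:=\boldsymbol{r}'$ and $\tilde{\boldsymbol{r}}':=\boldsymbol{r}$. The hypothesis $|\boldsymbol{r}|\leq a$ becomes $|\tilde{\boldsymbol{r}}'|\leq a$, so Theorem~\ref{taylorexptheorem} applies in the tilded variables and gives
\begin{equation*}
\mathcal{H}_k(\tilde{\boldsymbol{r}},\tilde{\boldsymbol{r}}')=\sum_{n=0}^{\infty}\sum_{m=0}^{n}\sum_{s=0}^{m}\tilde{\alpha}_{nm}^{s}(\tilde{\boldsymbol{r}}')\,\frac{\widehat{\mathscr{D}}_{nm}^{s}\mathcal{H}_k(\tilde{\boldsymbol{r}},\boldsymbol{0})}{2^{m}(n-m)!\,s!\,(m-s)!}.
\end{equation*}
The left-hand side equals $\mathcal{H}_k(\boldsymbol{r},\boldsymbol{r}')$ because $\mathcal{H}_k$ depends only on $\boldsymbol{r}-\boldsymbol{r}'$, and the coefficient $\tilde{\alpha}_{nm}^{s}(\tilde{\boldsymbol{r}}')$ is the same monomial as in \eqref{eqn:coefMExyz} but evaluated at $\boldsymbol{r}$.

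The final step is to identify the differential operator on the right. In the tilded frame $\widehat{\mathscr{D}}_{nm}^{s}$ differentiates with respect to the primed coordinates of the second slot, namely $\tilde{x}',\tilde{y}',\tilde{z}'$; but by construction these equal the original unprimed coordinates $x,y,z$, so $\widehat{\mathscr{D}}_{nm}^{s}$ in the tilded notation is exactly $\mathscr{D}_{nm}^{s}$ in the original notation acting on the first argument of $\mathcal{H}_k$. Using this identification together with $\mathcal{H}_k(\tilde{\boldsymbol{r}},\boldsymbol{0})=h_0^{(1)}(k|\boldsymbol{r}'|)=\mathcal{H}_k(\boldsymbol{0},\boldsymbol{r}')$, the right-hand side becomes
\begin{equation*}
\sum_{n=0}^{\infty}\sum_{m=0}^{n}\sum_{s=0}^{m}\tilde{\alpha}_{nm}^{s}(\boldsymbol{r})\,\frac{\mathscr{D}_{nm}^{s}\mathcal{H}_k(\boldsymbol{0},\boldsymbol{r}')}{2^{m}(n-m)!\,s!\,(m-s)!},
\end{equation*}
which is exactly \eqref{taylorexploc} with $\beta_{nm}^{s}$ as defined in \eqref{eqn:coefM2L}.

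I do not expect a serious obstacle; the only delicate point is bookkeeping which variable each differential operator refers to under the swap, since $\widehat{\mathscr{D}}$ is defined by primed-variable derivatives and the swap makes the primed slot carry the original unprimed coordinates. As an alternative, one can rerun the proof of Theorem~\ref{taylorexptheorem} verbatim but Taylor-expand in $\boldsymbol{r}$ at the origin; because the expansion is in $\boldsymbol{r}$ directly rather than through $-\boldsymbol{r}'$, the factor $(-1)^n$ that appeared in \eqref{eqn:taylor1} is absent, and the operator emerges directly as $\mathscr{D}_{nm}^{s}$, bypassing the final sign identity used in Theorem~\ref{taylorexptheorem}.
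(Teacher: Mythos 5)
Your argument is correct: the paper states this corollary without proof as an immediate consequence of Theorem \ref{taylorexptheorem}, and your symmetry/relabeling argument (swap the two arguments of $\mathcal H_k$, apply the theorem, and track that $\widehat{\mathscr D}_{nm}^{s}$ acting on the second slot becomes $\mathscr D_{nm}^{s}$ acting on the first slot of $\mathcal H_k(\cdot,\boldsymbol{r}')$ at $\boldsymbol{0}$) is exactly the intended route, with your alternative of rerunning the theorem's proof expanding in $\boldsymbol{r}$ being an equivalent check. The bookkeeping you flag is handled correctly, so nothing further is needed.
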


With Taylor expansions given in \eqref{teatrc} and \eqref{taylorexploc}, we
can have the second Taylor expansion based FMM which uses the following expansions:

\begin{itemize}
\item \textbf{Taylor expansion (TE) in a source box centered at
$\boldsymbol{r}_{c}$: }
\begin{equation}
\sum\limits_{j\in J_{m}}q_{j}h_{0}^{(1)}(k|\boldsymbol{r}-\boldsymbol{r}_{j}%
|)\approx\sum_{n=0}^{p}\sum_{m=0}^{n}\sum_{s=0}^{m}{\alpha}_{nm}^{s}%
\frac{\widehat{\mathscr{D}}_{nm}^{s}\mathcal H_{k}(\boldsymbol{r}, \boldsymbol{r}_{c})}{2^{m}(n-m)!s!(m-s)!}%
,\label{taylorexpfarxcfree2}%
\end{equation}
where
\begin{equation}
{\alpha}_{nm}^{s}=\sum\limits_{j\in J_{m}}q_{j}\tilde{\alpha}_{nm}%
^{s}(\boldsymbol{r}_{j}-\boldsymbol{r}_{c}).\label{mecoefficients}%
\end{equation}

\item \textbf{Taylor expansion (TE) in a target box centered at
$\boldsymbol{r}_{c}^l$: }
\begin{equation}
\sum\limits_{j\in J_{m}}q_{j}h_{0}^{(1)}(k|\boldsymbol{r}-\boldsymbol{r}_{j}%
|)\approx\sum_{n=0}^{p}\sum_{m=0}^{n}\sum_{s=0}^{m}{\beta}_{nm}^{s}%
\tilde{\alpha}_{nm}^{s}(\boldsymbol{r}-\boldsymbol{r}_{c}^{l}%
),\label{taylorexplocxclfree2}%
\end{equation}
where
\begin{equation}
{\beta}_{nm}^{s}=\sum\limits_{j\in J_{m}}\frac{q_{j}\mathscr{D}_{nm}^{s}\mathcal{H}_{k}(\boldsymbol{r}_{c}^{l}, \boldsymbol{r}_{j})}{2^{m}%
(n-m)!s!(m-s)!}.\label{localexpcoeff2}%
\end{equation}

\end{itemize}

The translation operators used in the FMM algorithm can be derived similarly
as in the conventional way. Firstly, by applying Taylor expansion
\eqref{taylorexpfarxcfree2} in \eqref{localexpcoeff2} and using the fact $\mathcal{H}_{k}(\boldsymbol{r}_{c}^{l}, \boldsymbol{r}_{j})=h_0^{(1)}(k|\bs r^l_c-\bs r_j|)$, we have
\[%
\begin{split}
{\beta}_{nm}^{s}= &  \frac{1}{2^{m}(n-m)!s!(m-s)!}\mathscr{D}_{nm}^{s}%
\sum_{n^{\prime}=0}^{p}\sum_{m^{\prime}=0}^{n^{\prime}}\sum_{s^{\prime}%
=0}^{m^{\prime}}{\alpha}_{n^{\prime}m^{\prime}}^{s^{\prime}}\frac
{\widehat{\mathscr{D}}_{n^{\prime}m^{\prime}}^{s^{\prime}}\mathcal{H}_k(\boldsymbol{r}_{c}^{l}, \boldsymbol{r}_{c})}{2^{m^{\prime}}(n^{\prime
}-m^{\prime})!s^{\prime}!(m^{\prime}-s^{\prime})!}\\
= &  \sum_{n^{\prime}=0}^{p}\sum_{m^{\prime}=0}^{n^{\prime}}\sum_{s^{\prime
}=0}^{m^{\prime}}{\alpha}_{n^{\prime}m^{\prime}}^{s^{\prime}}\frac
{\mathscr{D}_{nm}^{s}\widehat{\mathscr{D}}_{n^{\prime}m^{\prime}}^{s^{\prime}}\mathcal{H}_k(\boldsymbol{r}_{c}^{l}, \boldsymbol{r}_{c})}{2^{m+m^{\prime}%
}(n-m)!s!(m-s)!(n^{\prime}-m^{\prime})!s^{\prime}!(m^{\prime}-s^{\prime})!}.
\end{split}
\]
Therefore, the translation operator from TE in a source box centered
at $\boldsymbol{r}_{c}$ to TE in a target box centered at $\boldsymbol{r}_{c}%
^{l}$ is given by
\begin{equation}
{\beta}_{nm}^{s}=\sum_{n^{\prime}=0}^{p}\sum_{m^{\prime}=0}^{n^{\prime}}%
\sum_{s^{\prime}=0}^{m^{\prime}}{\alpha}_{n^{\prime}m^{\prime}}^{s^{\prime}%
}L_{nms}^{n^{\prime}m^{\prime}s^{\prime}},
\end{equation}
where
\begin{equation}%
L_{nms}^{n^{\prime}m^{\prime}s^{\prime}}=  \frac{\mathscr{D}_{nm}^{s}\widehat{\mathscr{D}}_{n^{\prime}m^{\prime}}^{s^{\prime}}\mathcal{H}_k(\boldsymbol{r}_{c}^{l}, \boldsymbol{r}_{c})}{2^{m+m^{\prime}}(n-m)!s!(m-s)!(n^{\prime}-m^{\prime})!s^{\prime
}!(m^{\prime}-s^{\prime})!}.
\label{M2Ltransop}%
\end{equation}
Denote the coefficients of TE in the source box centered at
$\boldsymbol{r}_{c}^{\prime}$ by
\[
{\gamma}_{nm}^{s}=\sum\limits_{j=1}^{N}q_{j}\tilde{\alpha}_{nm}^{s}%
(\boldsymbol{r}_{j}-\boldsymbol{r}_{c}^{\prime}).
\]
Direct calculation gives
\begin{equation}%
\begin{split}
&  [(x_{j}-x_{c}^{\prime})+\ri(y_{j}-y_{c}^{\prime})]^{s}[(x_{j}-x_{c}%
^{\prime})-\ri(y_{j}-y_{c}^{\prime})]^{m-s}(z_{j}-z_{c}^{\prime})^{n-m}\\
= &  \sum\limits_{s^{\prime}=0}^{s}\sum\limits_{m^{\prime}=0}^{m-s}%
\sum\limits_{n^{\prime}=0}^{n-m}B_{nms}^{n^{\prime}m^{\prime}s^{\prime}%
}[(x_{j}-x_{c})+\ri(y_{j}-y_{c})]^{s^{\prime}}[(x_{j}-x_{c})-\ri(y_{j}%
-y_{c})]^{m^{\prime}}(z_{j}-z_{c})^{n^{\prime}}%
\end{split}
\end{equation}
where
\begin{equation}%
\begin{split}
B_{nms}^{n^{\prime}m^{\prime}s^{\prime}}= &  \frac{s!(m-s)!(n-m)![(x_{c}%
-x_{c}^{\prime})+\ri(y_{c}-y_{c}^{\prime})]^{s-s^{\prime}}}{s^{\prime
}!(s-s^{\prime})!m^{\prime}!(m-s-m^{\prime})!n^{\prime}!(n-m-n^{\prime})!}\\
&  \times\lbrack(x_{c}-x_{c}^{\prime})-\ri(y_{c}-y_{c}^{\prime}%
)]^{m-s-m^{\prime}}(z_{c}-z_{c}^{\prime})^{n-m-n^{\prime}}.
\end{split}
\end{equation}
Therefore,
\begin{equation}
\tilde{\alpha}_{nm}^{s}(\boldsymbol{r}_{j}-\boldsymbol{r}_{c}^{\prime}%
)=\sum\limits_{s^{\prime}=0}^{s}\sum\limits_{m^{\prime}=0}^{m-s}%
\sum\limits_{n^{\prime}=0}^{n-m}B_{nms}^{n^{\prime}m^{\prime}s^{\prime}}%
\tilde{\alpha}_{n^{\prime}+m^{\prime}+s^{\prime},m^{\prime}+s^{\prime}%
}^{s^{\prime}}(\boldsymbol{r}_{j}-\boldsymbol{r}_{c}),
\end{equation}
which implies that the translation operator from TE in a source box
centered at $\boldsymbol{r}_{c}$ to TE in another source box centered at
$\boldsymbol{r}_{c}^{\prime}$ has the form
\begin{equation}
\gamma_{nm}^{s}=\sum\limits_{s^{\prime}=0}^{s}\sum\limits_{m^{\prime}=0}%
^{m-s}\sum\limits_{n^{\prime}=0}^{n-m}B_{nms}^{n^{\prime}m^{\prime}s^{\prime}%
}{\alpha}_{n^{\prime}+m^{\prime}+s^{\prime},m^{\prime}+s^{\prime}}^{s^{\prime
}}.\label{me2mesymmetricte}%
\end{equation}

Let
\begin{equation}
{\lambda}_{nm}^{s}=\sum\limits_{j\in J_{m}}\frac{q_{j}\mathscr{D}_{nm}^{s}\mathcal{H}_{k}(\tilde{\boldsymbol{r}}_{c}^{l},\boldsymbol{r}_{j})}%
{2^{m}(n-m)!s!(m-s)!},
\end{equation}
be the coefficients of TE in a target box centered at $\tilde{\boldsymbol{r}}%
_{c}^{l}$. By applying Taylor expansion at $\bs r_c^l$ we obtain
\begin{equation}%
\begin{split}
{\lambda}_{nm}^{s}= &  \frac{1}{2^{m}(n-m)!s!(m-s)!}{\mathscr D}_{nm}^{s}%
\sum\limits_{j\in J_{m}}q_{j}h_{0}^{(1)}(k|\tilde{\boldsymbol{r}}_{c}%
^{l}-\boldsymbol{r}_{j}|)\\
\approx &  \frac{1}{2^{m}(n-m)!s!(m-s)!}\sum_{n^{\prime}=0}^{p}\sum
_{m^{\prime}=0}^{n^{\prime}}\sum_{s^{\prime}=0}^{m^{\prime}}{\beta}%
_{n^{\prime}m^{\prime}}^{s^{\prime}}{\mathscr D}_{nm}^{s}\tilde{\alpha
}_{n^{\prime}m^{\prime}}^{s^{\prime}}(\tilde{\boldsymbol{r}}_{c}%
^{l}-\boldsymbol{r}_{c}^{l}).
\end{split}
\label{taylorexplocxcltrans}%
\end{equation}
Note that
\[%
\begin{split}
&  \Big(\frac{\partial}{\partial\tilde{x}_{c}^{l}}-\ri\frac{\partial}%
{\partial\tilde{y}_{c}^{l}}\Big)^{s}\Big(\frac{\partial}{\partial\tilde{x}%
_{c}^{l}}+\ri\frac{\partial}{\partial\tilde{y}_{c}^{l}}\Big)^{m-s}[(\tilde
{x}_{c}^{l}-x_{c}^{l})+\ri(\tilde{y}_{c}^{l}-y_{c}^{l})]^{s^{\prime}}%
[(\tilde{x}_{c}^{l}-x_{c}^{l})-\ri(\tilde{y}_{c}^{l}-y_{c}^{l})]^{m^{\prime
}-s^{\prime}}\\
= &
\begin{cases}
\displaystyle0,\quad m-s>m^{\prime}-s^{\prime}\;\;\mathrm{or}\;\;s>s^{\prime
},\\[6pt]%
\displaystyle\frac{2^{m}s^{\prime}!(m^{\prime}-s^{\prime})![(\tilde{x}_{c}%
^{l}-x_{c}^{l})+\ri(\tilde{y}_{c}^{l}-y_{c}^{l})]^{s^{\prime}-s}}{(s^{\prime
}-s)!(m^{\prime}-s^{\prime}-m+s)!}[(\tilde{x}_{c}^{l}-x_{c}^{l})-\ri(\tilde
{y}_{c}^{l}-y_{c}^{l})]^{m^{\prime}-s^{\prime}-m+s},\quad\mathrm{otherwise},
\end{cases}
\\
&  \Big(\frac{\partial}{\partial\tilde{z}_{c}^{l}}\Big)^{n-m}(\tilde{z}%
_{c}^{l}-z_{c}^{l})^{n^{\prime}-m^{\prime}}=%
\begin{cases}
\displaystyle0,\quad\mathrm{if}\;n-m>n^{\prime}-m^{\prime},\\
\displaystyle\frac{(n^{\prime}-m^{\prime})!}{(n^{\prime}-m^{\prime}%
-(n-m))!}(\tilde{z}_{c}^{l}-z_{c}^{l})^{n^{\prime}-m^{\prime}-n+m}%
,\;\mathrm{otherwise}.\\
\end{cases}
\end{split}
\]
Therefore,
\begin{equation}%
{\mathscr D}_{nm}^{s}\tilde{\alpha}_{n^{\prime}m^{\prime}}^{s^{\prime}}%
(\tilde{\boldsymbol{r}}_{c}^{l}-\boldsymbol{r}_{c}^{l})=%
\begin{cases}
\displaystyle0,\quad\mathrm{if}\;\;m-s>m^{\prime}-s^{\prime}\;\;\mathrm{or}%
\;\;s>s^{\prime}\;\;\mathrm{or}\;\;\;n-m>n^{\prime}-m^{\prime};\\[10pt]%
\displaystyle2^{m}C_{nms}^{n^{\prime}m^{\prime}s^{\prime}}\tilde{\alpha
}_{n^{\prime}-n,m^{\prime}-m}^{s^{\prime}-s}(\tilde{\boldsymbol{r}}_{c}%
^{l}-\boldsymbol{r}_{c}^{l}),\quad\mathrm{otherwise},
\end{cases}
\label{LE2LErans2}%
\end{equation}
where
\[
C_{nms}^{n^{\prime}m^{\prime}s^{\prime}}=\frac{(m^{\prime}-s^{\prime
})!s^{\prime}!(n^{\prime}-m^{\prime})!}{(m^{\prime}-s^{\prime}-m+s)!(s^{\prime
}-s)!(n^{\prime}-m^{\prime}-(n-m))!}.
\]
Substituting \eqref{LE2LErans2} into \eqref{taylorexplocxcltrans} gives the translation
operator from TE in a target box centered at $\boldsymbol{r}_{c}^{l}$ to TE in
another target box centered at $\tilde{\boldsymbol{r}}_{c}^{l}$
\begin{equation}%
{\lambda}_{nm}^{s}=\sum\limits_{n^{\prime}=n}^{p}\sum\limits_{m^{\prime}%
=m}^{m+n^{\prime}-n}\sum_{s^{\prime}=s}^{s+m^{\prime}-m}\beta_{n^{\prime
}m^{\prime}}^{s^{\prime}}\frac{C_{nms}^{n^{\prime}m^{\prime}s^{\prime}}%
\tilde{\alpha}_{n^{\prime}-n,m^{\prime}-m}^{s^{\prime}-s}(\tilde
{\boldsymbol{r}}_{c}^{l}-\boldsymbol{r}_{c}^{l})}{(n-m)!s!(m-s)!}.
\label{le2lesymmetricte}%
\end{equation}

To ensure the efficiency of this algorithm, a fast algorithm is needed for the
calculation of derivatives $\mathscr{D}_{nm}^{s}\mathcal H_k(\bs r, \bs r')$ and $\mathscr{D}_{nm}^{s}\widehat{\mathscr{D}}_{n^{\prime}m^{\prime}}^{s^{\prime}}\mathcal H_k(\bs r, \bs r')$ which are used in the computation of coefficients
\eqref{localexpcoeff2} and translation operators \eqref{M2Ltransop}. A
recurrence formula can be derived from the following result (cf.\cite{martin2006multiple}). Define
\begin{equation}
\Omega_{n}^{m}(\boldsymbol{r})=h_{n}^{(1)}(k|\boldsymbol{r}|)Y_{n}^{m}%
(\theta,\phi),\quad\mathscr D^{\pm}=\frac{1}{k}\Big(\frac{\partial}{\partial
x}\pm\ri\frac{\partial}{\partial y}\Big),\quad\mathscr D^{0}=-\frac{1}{k}%
\frac{\partial}{\partial z},
\end{equation}
where
\begin{equation}
Y_{n}^{m}(\theta,\phi)=(-1)^{m}\sqrt{\frac{2n+1}{4\pi}\frac{(n-m)!}{(n+m)!}%
}P_{l}^{m}(\cos\theta)e^{\ri m\phi},
\end{equation}
is the spherical harmonics.

\begin{theorem}
\label{Thm:hankelderiative} For $0\leq|m|\leq n$,
\begin{equation}%
\begin{split}
&  \mathscr D^{+}\Omega_{n}^{m}=A_{nm}^{+}\Omega_{n+1}^{m+1}+B_{nm}^{+}%
\Omega_{n-1}^{m+1},\\
&  \mathscr D^{-}\Omega_{n}^{m}=A_{nm}^{-}\Omega_{n+1}^{m-1}+B_{nm}^{-}%
\Omega_{n-1}^{m-1},\\
&  \mathscr D^{0}\Omega_{n}^{m}=A_{nm}^{0}\Omega_{n+1}^{m}+B_{nm}^{0}%
\Omega_{n-1}^{m}.
\end{split}
\end{equation}
where
\[%
\begin{split}
&  A_{nm}^{+}=\sqrt{\frac{(n+m+2)(n+m+1)}{(2n+1)(2n+3)}},\quad B_{nm}%
^{+}=\sqrt{\frac{(n-m)(n-m-1)}{4n^{2}-1}},\\
&  A_{nm}^{-}=-\sqrt{\frac{(n-m+2)(n-m+1)}{(2n+1)(2n+3)}},\quad B_{nm}%
^{-}=-\sqrt{\frac{(n+m)(n+m-1)}{4n^{2}-1}},\\
&  A_{nm}^{0}=-\sqrt{\frac{(n+1)^{2}-m^{2}}{(2n+1)(2n+3)}},\quad B_{nm}%
^{0}=\sqrt{\frac{n^{2}-m^{2}}{4n^{2}-1}}.
\end{split}
\]
In particular, for $n\ge0$
\begin{equation}
\mathscr D^{0}\Omega_{n}^{\pm n}=\frac{1}{\sqrt{2n+3}}\Omega_{n+1}^{\pm n}.
\end{equation}

\end{theorem}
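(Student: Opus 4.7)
The plan is to verify the three recurrences by expressing the Cartesian derivatives $\mathscr D^{\pm}$ and $\mathscr D^0$ in spherical coordinates and then invoking the classical recurrences for the spherical Hankel functions and the associated Legendre functions used to define $Y_n^m$. Concretely, by the chain rule one writes
\[
\partial_x \pm \ri\partial_y = e^{\pm \ri\phi}\Bigl(\sin\theta\,\partial_r + \tfrac{\cos\theta}{r}\partial_\theta \pm \tfrac{\ri}{r\sin\theta}\partial_\phi\Bigr), \qquad \partial_z = \cos\theta\,\partial_r - \tfrac{\sin\theta}{r}\partial_\theta,
\]
and applies them to $\Omega_n^m(\boldsymbol r)= h_n^{(1)}(kr)Y_n^m(\theta,\phi)$. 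The $\partial_\phi$ produces a factor $\ri m$, the $\partial_r$ is handled by $(h_n^{(1)})'(z) = h_{n-1}^{(1)}(z)-(n+1)h_n^{(1)}(z)/z$, and the remaining trigonometric factors combine with $Y_n^m$ through the standard identities for $\cos\theta\,Y_n^m$, $\sin\theta\,e^{\pm \ri\phi}Y_n^m$, and the ladder actions $e^{\pm \ri\phi}(\partial_\theta\pm \ri\cot\theta\,\partial_\phi)Y_n^m$.

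The key step is to observe that after these substitutions every term on the right-hand side collapses onto a linear combination of $\Omega_{n\pm 1}^{m+1}$ (for $\mathscr D^+$), $\Omega_{n\pm 1}^{m-1}$ (for $\mathscr D^-$), and $\Omega_{n\pm 1}^{m}$ (for $\mathscr D^0$); this is forced purely by the selection rules of the $\phi$-dependence and by the fact that $Y_n^m$ obeys second-order equations in $\theta$. Once this structural form is established, the scalar coefficients $A_{nm}^{\pm,0}$ and $B_{nm}^{\pm,0}$ are determined by matching the normalization factors
$(-1)^m\sqrt{(2n+1)(n-m)!/[4\pi(n+m)!]}$
that appear in $Y_n^m$ after each trigonometric identity is applied, together with the factor of $1/k$ built into the definition of $\mathscr D^\pm,\mathscr D^0$. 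An economical alternative is to recognize $(\mathscr D^+,\mathscr D^0,\mathscr D^-)$ as a rank-one spherical tensor operator, so by Wigner--Eckart the matrix elements are proportional to Clebsch--Gordan coefficients $\langle n\pm1,m\pm1\,|\,1,\pm1;n,m\rangle$; the scalar reduced matrix element is then pinned down by a single explicit computation, for instance on the highest-weight state $Y_n^n$, which also supplies the identity $\mathscr D^0\Omega_n^{\pm n} = (2n+3)^{-1/2}\Omega_{n+1}^{\pm n}$ since $B_{n,\pm n}^0 = 0$ and $(n+1)^2-n^2=2n+1$ in the formula for $A_{n,\pm n}^0$.

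The main obstacle is purely bookkeeping: carrying the four independent square-root factors through the chain-rule expansion and the ladder identities without a sign or index slip. To keep this tractable I would compute the three derivatives in parallel, use the structural argument above to eliminate all but one harmonic index pair on each right-hand side at each stage, and only at the very end substitute the explicit normalizations to read off $A_{nm}^{\pm,0}$ and $B_{nm}^{\pm,0}$. Since the result appears in Martin's monograph cited after the statement, the present proof reduces to checking that the conventions used for $Y_n^m$, $h_n^{(1)}$, and the sign in $\mathscr D^0=-k^{-1}\partial_z$ coincide with those in the reference, after which the stated formulas are the specialization of the general ladder identities.
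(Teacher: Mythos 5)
The paper does not actually prove this theorem: it is quoted from Martin's monograph (the ``(cf.~[martin2006multiple])'' preceding the statement) and no proof environment follows it, so there is no in-paper argument to compare against. Your outline is the standard derivation of these ladder identities and is a legitimate route: the spherical-coordinate expressions for $\partial_x\pm\ri\partial_y$ and $\partial_z$ are correct, the selection-rule argument does force each right-hand side onto the two indicated harmonics, and the coefficients then follow from the recurrences $(h_n^{(1)})'(z)=h_{n-1}^{(1)}(z)-(n+1)h_n^{(1)}(z)/z$ together with the three-term relations for $\cos\theta\,Y_n^m$ and $\sin\theta\,e^{\pm\ri\phi}Y_n^m$; the Wigner--Eckart shortcut is also valid and arguably the cleanest way to pin down all six coefficients from one highest-weight computation. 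As written it is of course a plan rather than a proof --- the entire content is the coefficient bookkeeping you defer --- but the plan would succeed.

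One concrete point you gloss over: your final sentence asserts that $B^0_{n,\pm n}=0$ and $(n+1)^2-n^2=2n+1$ ``supply'' the identity $\mathscr D^0\Omega_n^{\pm n}=(2n+3)^{-1/2}\Omega_{n+1}^{\pm n}$, but with the printed $A^0_{nm}=-\sqrt{((n+1)^2-m^2)/((2n+1)(2n+3))}$ this would come out as $-(2n+3)^{-1/2}\Omega_{n+1}^{\pm n}$. A direct check at $n=m=0$ (using $h_0^{(1)\prime}=-h_1^{(1)}$ and $\cos\theta=\sqrt{4\pi/3}\,Y_1^0$) gives $\mathscr D^0\Omega_0^0=-k^{-1}\partial_z\Omega_0^0=+3^{-1/2}\Omega_1^0$, so the ``in particular'' line is the correct one and the sign of $A^0_{nm}$ (and correspondingly $B^0_{nm}$) as displayed in the theorem is off under the paper's own conventions for $Y_n^m$ and $\mathscr D^0$. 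Your proof strategy would surface this discrepancy at the coefficient-matching stage; you should resolve it explicitly rather than present the general formula and the special case as mutually consistent.
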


From the theorem \ref{Thm:hankelderiative}, the high order derivatives can be expressed as
\[
\{(\mathscr D^{+})^{s},(\mathscr D^{-})^{s},(\mathscr D^{0})^{s}\}\Omega
_{n}^{m}=\sum\limits_{r=0}^{s}\big\{{C}_{rs}^{+}\Omega_{n-s+2r}^{m+s},{C}%
_{rs}^{-}\Omega_{n-s+2r}^{m-s},C_{rs}^{0}\Omega_{n-s+2r}^{m}\big\},
\]
where the coefficients $\{{C}_{rs}^{+}\},\{C_{rs}^{-}\},\{{C}_{rs}^{0}\}$ have
a recurrence formula
\[%
\begin{split}
&  C_{rs}^{+}=%
\begin{cases}
\displaystyle0, & n-s+2r<m+s,\\[5pt]%
\displaystyle B_{n-s+1,m+s-1}^{+}C_{0,s-1}^{+}, & r=0,\\[5pt]%
\displaystyle A_{n+s-1,m+s-1}^{+}C_{s-1,s-1}^{+}, & r=s,\\[5pt]%
\displaystyle A_{n-s+2r-1,m+s-1}^{+}C_{r-1,s-1}^{+}+B_{n-s+2r+1,m+s-1}%
^{+}C_{r,s-1}^{+}, & 0<r<s,
\end{cases}
\\
&  C_{rs}^{-}=%
\begin{cases}
\displaystyle0, & n-s+2r<|m-s|,\\[5pt]%
\displaystyle B_{n-s+1,m-s+1}^{-}C_{0,s-1}^{-}, & r=0,\\[5pt]%
\displaystyle A_{n+s-1,m-s+1}^{-}C_{s-1,s-1}^{-}, & r=s,\\[5pt]%
\displaystyle A_{n-s+2r-1,m-s+1}^{-}C_{r-1,s-1}^{-}+B_{n-s+2r+1,m-s+1}%
^{-}C_{r,s-1}^{-}, & 0<r<s,
\end{cases}
\\
&  {C}_{rs}^{0}=%
\begin{cases}
\displaystyle0, & n-s+2r<m,\\[5pt]%
\displaystyle B_{n-s+1,m}^{0}C_{0,s-1}^{0}, & r=0,\\[5pt]%
\displaystyle A_{n+s-1,m}^{0}C_{s-1,s-1}^{0}, & r=s,\\[5pt]%
\displaystyle A_{n-(s-2r)-1,m}^{0}C_{r-1,s-1}^{0}+B_{n-(s-2r)+1,m}%
^{0}C_{r,s-1}^{0}, & 0<r<s,
\end{cases}
\end{split}
\]
with initial values
\[
C_{00}^{+}=C_{00}^{-}={C}_{00}^{0}=1.
\]
Therefore,
\begin{equation}%
\begin{split}
\mathscr D_{nm}^{s}\mathcal H_k(\boldsymbol{r},\bs r^{\prime})= &  \Big(\frac{-1}%
{k}\Big)^{n}\sqrt{\frac{1}{4\pi}}(\mathscr D^{-})^{s}(\mathscr D^{+}%
)^{m-s}(\mathscr D^{0})^{n-m}\Omega_{0}^{0}(\boldsymbol{r}-\boldsymbol{r}^{\prime})\\
= &  \Big(\frac{-1}{k}\Big)^{n}\sqrt{\frac{1}{4\pi}}\sum\limits_{n^{\prime}%
=0}^{n-m}\sum\limits_{m^{\prime}=0}^{m-s}\sum\limits_{s^{\prime}=0}%
^{s}C_{n^{\prime}m^{\prime}}^{s^{\prime}}\Omega_{2(n^{\prime}+m^{\prime
}+s^{\prime})-n}^{m-2s}(\boldsymbol{r}-\boldsymbol{r}^{\prime}),
\end{split}
\end{equation}
where the coefficients $\{C_{n^{\prime}m^{\prime}}^{s^{\prime}}\}$ can be
computed using coefficients $\{{C}_{rs}^{+}\},\{C_{rs}^{-}\},\{{C}_{rs}^{0}\}$. This formula is also used for the calculation of $\mathscr{D}_{nm}^{s}\widehat{\mathscr{D}}_{n^{\prime}m^{\prime}}^{s^{\prime}}\mathcal H_k(\bs r, \bs r')=(-1)^{n^{\prime}}\mathscr{D}_{n+n^{\prime},m+m^{\prime}}^{s+s^{\prime}}\mathcal H_k(\bs r, \bs r')$.

\subsection{Multi-layer media}

Consider the calculation of interactions given by \eqref{totalinteraction}
with the setting presented in the last section. We only need to focus on a
general component given by \eqref{generalsum}. According to
\eqref{taylorexpfarxcfree2}-\eqref{localexpcoeff2}, the TE-FMM for
\eqref{generalsum} will use Taylor expansions
\begin{equation}
\Phi_{\ell\ell^{\prime}}^{b\uparrow}(\boldsymbol{r}_{\ell i})=\sum
_{n=0}^{\infty}\sum_{m=0}^{n}\sum_{s=0}^{m}{\alpha}_{nm}^{s}\frac
{\widehat{\mathscr{D}}_{nm}^{s}u_{\ell\ell^{\prime}}^{\uparrow}%
(\boldsymbol{r}_{\ell i},\boldsymbol{r}_{c})}{2^{m}(n-m)!s!(m-s)!}%
,\quad{\alpha}_{nm}^{s}=\sum\limits_{j\in J_{m}}Q_{\ell^{\prime}j}%
\tilde{\alpha}_{nm}^{s}(\boldsymbol{r}_{\ell^{\prime}j}-\boldsymbol{r}_{c}%
),\label{taylorexpfarlayers}%
\end{equation}
in a source box centered at $\boldsymbol{r}_{c}=(x_{c},y_{c},z_{c})$ and
\begin{equation}
\Phi_{\ell\ell^{\prime}}^{b\uparrow}(\boldsymbol{r}_{\ell i})=\sum_{n=0
}^{p}\sum_{m=0}^{n}\sum_{s=0}^{m}{\beta}_{nm}^{s}\tilde{\alpha}_{nm}%
^{s}(\boldsymbol{r}_{\ell i}-\boldsymbol{r}_{c}^{l}),\quad{\beta}_{nm}%
^{s}=\sum\limits_{j\in J_{m}}\frac{Q_{\ell^{\prime}j}\mathscr{D}_{nm}%
^{s}u_{\ell\ell^{\prime}}^{\uparrow}(\boldsymbol{r}_{c}^{l}%
,\boldsymbol{r}_{\ell^{\prime}j})}{2^{m}(n-m)!s!(m-s)!}%
,\label{taylorexploclayers}%
\end{equation}
in a target box centered at $\boldsymbol{r}_{c}^{l}=(x_{c}^{l},y_{c}^{l}%
,z_{c}^{l})$, respectively.

Applying TE \eqref{taylorexpfarlayers} in the expression of the coefficients
$\beta_{nm}^{s}$ in \eqref{taylorexploclayers}, we obtain
\begin{equation}
{\beta}_{nm}^{s}\approx\sum_{n^{\prime}=0}^{p}\sum_{m^{\prime}=0}^{n^{\prime}%
}\sum_{s^{\prime}=0}^{m^{\prime}}{\alpha}_{n^{\prime}m^{\prime}}^{s^{\prime}%
}\frac{\mathscr{D}_{nm}^{s}\widehat{\mathscr{D}}_{n^{\prime},m^{\prime}%
}^{s^{\prime}}u_{\ell\ell^{\prime}}^{\uparrow}(\boldsymbol{r}_{c}%
^{l},\boldsymbol{r}_{c})}{M_{nms}^{n^{\prime}m^{\prime}s^{\prime}}%
},\label{layerm2ltrans}%
\end{equation}
where
\[
M_{nms}^{n^{\prime}m^{\prime}s^{\prime}}=2^{m+m^{\prime}}%
(n-m)!s!(m-s)!(n^{\prime}-m^{\prime})!s^{\prime}!(m^{\prime}-s^{\prime})!.
\]
Due to the symmetry of differential operators $\mathscr{D}_{nm}^{s}$ and
$\widehat{\mathscr{D}}_{n^{\prime}m^{\prime}}^{s^{\prime}}$, the entry of the
translation matrix in \eqref{layerm2ltrans} also has a symmetry in the $x-y$
plane. This can be shown by using Sommerfeld integral representation
\eqref{greenfuncomponent}. In fact, by using the identities
\begin{equation}%
\begin{split}
&  \Big(\frac{\partial}{\partial x}+\ri\frac{\partial}{\partial y}%
\Big)^{m-{s}}J_{0}(kr)=(-k)^{m-s}J_{m-s}(kr)e^{\ri(m-s)\theta},\\
&  \Big(\frac{\partial}{\partial x}-\ri\frac{\partial}{\partial y}%
\Big)^{s}\Big(J_{m-s}(kr)e^{\ri(m-s)\theta}\Big)=k^{s}J_{m-2s}%
(kr)e^{\ri(m-2s)\theta},
\end{split}
\end{equation}
we have
\begin{equation}%
\begin{split}
&  \frac{\mathscr{D}_{nm}^{s}\widehat{\mathscr{D}}_{n^{\prime},m^{\prime}%
}^{s^{\prime}}u_{\ell\ell^{\prime}}^{\uparrow}(\boldsymbol{r}_{c}%
^{l},\boldsymbol{r}_{c})}{M_{nms}^{n^{\prime}m^{\prime}s^{\prime}}}%
=\frac{\mathscr{D}_{nm}^{s}\widehat{\mathscr{D}}_{n^{\prime}m^{\prime}%
}^{s^{\prime}}}{M_{nms}^{n^{\prime}m^{\prime}s^{\prime}}}\frac{1}{k_{\ell}}%
\int_{0}^{\infty}k_{\rho}J_{0}(k_{\rho}\rho)\frac{e^{\ri k_{\ell z}(z_{c}%
^{l}-d_{\ell})}}{k_{\ell z}}\tilde{\sigma}_{\ell\ell^{\prime}}^{\uparrow
}(k_{\rho},z_{c})dk_{\rho}\\
= &  \frac{(-1)^{m+s+s^{\prime}}e^{\ri(m+m^{\prime}-2(s+s^{\prime}))\phi}%
}{M_{nms}^{n^{\prime}m^{\prime}s^{\prime}}}\frac{1}{k_{\ell}}\int_{0}^{\infty}k_{\rho
}^{m+m^{\prime}+1}J_{m+m^{\prime}-2(s+s^{\prime})}(k_{\rho}\rho)\\
&  \times(\ri k_{\ell z})^{n-m}\frac{e^{\ri k_{\ell z}(z_{c}^{l}-d_{\ell})}%
}{k_{\ell z}}\frac{\partial^{n^{\prime}-m^{\prime}}\tilde{\sigma}_{\ell
\ell^{\prime}}^{\uparrow}(k_{\rho},z_{c})}{\partial z^{\prime}}dk_{\rho},
\end{split}
\end{equation}
where $(\rho,\phi)$ is the polar coordinates of $(x_{c}^{l}-x_{c},y_{c}%
^{l}-y_{c})$
\[
\frac{\partial^{n^{\prime}-m^{\prime}}\tilde{\sigma}_{\ell\ell^{\prime}%
}^{\uparrow}(k_{\rho},z_{c})}{\partial z^{\prime}}=(\ri k_{\ell^{\prime}%
z})^{n^{\prime}-m^{\prime}}\Big(e^{\ri k_{\ell^{\prime}z}(z_{c}-d_{\ell
^{\prime}})}\sigma_{\ell\ell^{\prime}}^{\uparrow\uparrow}(k_{\rho
})+(-1)^{n^{\prime}-m^{\prime}}e^{\ri k_{\ell^{\prime}z}(d_{\ell^{\prime}%
-1}-z_{c})}\sigma_{\ell\ell^{\prime}}^{\uparrow\downarrow}(k_{\rho})\Big).
\]
For general integer indices $n,n^{\prime},m,m^{\prime}$, define integrals
\begin{equation}
\mathcal{S}_{nn^{\prime}}^{mm^{\prime}}(\rho,z,z^{\prime})=\frac{1}{k_{\ell}}\int_{0}^{\infty}\frac{k_{\rho}^{m+1}J_{m-2m^{\prime}}(k_{\rho}\rho)(\ri
k_{\ell z})^{n}}{2^{m}m!n!n^{\prime}!}\frac{e^{\ri k_{\ell z}(z-d_{\ell})}%
}{k_{\ell z}}\frac{\partial^{n^{\prime}}\tilde{\sigma}_{\ell\ell^{\prime}%
}^{\uparrow}(k_{\rho},z^{\prime})}{\partial z^{\prime}}dk_{\rho}%
.\label{symmetricdetable}%
\end{equation}
Then
\[
\frac{\mathscr{D}_{nm}^{s}\widehat{\mathscr{D}}_{n^{\prime},m^{\prime}%
}^{s^{\prime}}u_{\ell\ell^{\prime}}^{\uparrow}(\boldsymbol{r}_{c}%
^{l},\boldsymbol{r}_{c})}{M_{nms}^{n^{\prime}m^{\prime}s^{\prime}}}%
=\frac{(-1)^{m+s+s^{\prime}}(m+m^{\prime})!}{s!(m-s)!s^{\prime}!(m^{\prime
}-s^{\prime})!}e^{\ri(m+m^{\prime}-2(s+s^{\prime}))\phi}\mathcal{S}%
_{n-m,n^{\prime}-m^{\prime}}^{m+m^{\prime},s+s^{\prime}}(\rho,z_{c}^{l}%
,z_{c}).
\]
We pre-compute integrals $\mathcal{S}_{nn^{\prime}}^{mm^{\prime}}%
(\rho,z,z^{\prime})$ on a 3D grid $\{\rho_{i},z_{j},z_{k}^{\prime}\}$ in the
domain of interest for all $n,m=0,1,\cdots,p$; $s=0,1,\cdots,2p$; $s^{\prime
}=0,1,\cdots,s$. Then, a polynomial interpolation is performed for the
computation of derivatives $\mathscr{D}_{nm}^{s}\widehat{\mathscr{D}}%
_{n^{\prime},m^{\prime}}^{s^{\prime}}u_{\ell\ell^{\prime}}^{\uparrow
}(\boldsymbol{r}_{c}^{l},\boldsymbol{r}_{c})$ in the translation operators.

The computation of Sommerfeld integrals similar to $\mathcal{S}_{nn^{\prime}%
}^{mm^{\prime}}(\rho,z,z^{\prime})$ is a standard problem in acoustic and
electromagnetic scattering and often handled by contour deformation. It is
typical to deform the integration contour by pushing it away from the real
line into the fourth quadrant of the complex $k_{\rho}$-plane to avoid branch
points and poles in the integrand. Here, we use a piece-wise smooth contour
which consists of two segments:
\begin{equation}
\Gamma_{1}:\;\;\{k_{\rho}=\ri t,-b\leq t\leq0\},\quad\Gamma_{2}%
:\;\;\{k_{\rho}=t-\ri b,\quad0\leq t<\infty\}.
\end{equation}
We truncate $\Gamma_{2}$ at a point $t_{max}>0$, where the integrand has
decayed to a user specified tolerance.
\begin{figure}[ht!]
	\centering
	\subfigure[$(1, 3, 3, 2)$]{\includegraphics[scale=0.25]{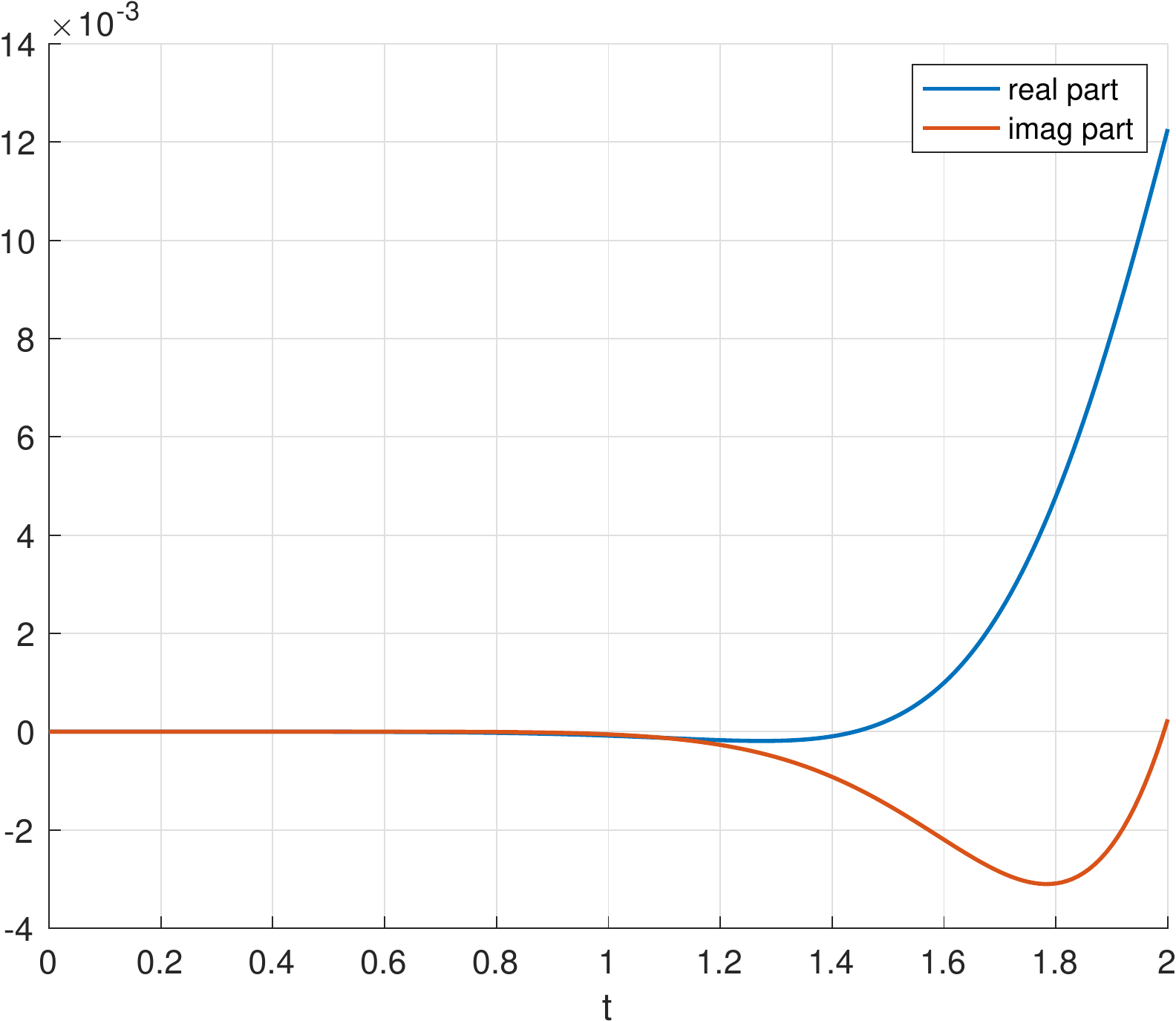}}\quad
	\subfigure[$(5, 6, 3, 2)$]{\includegraphics[scale=0.25]{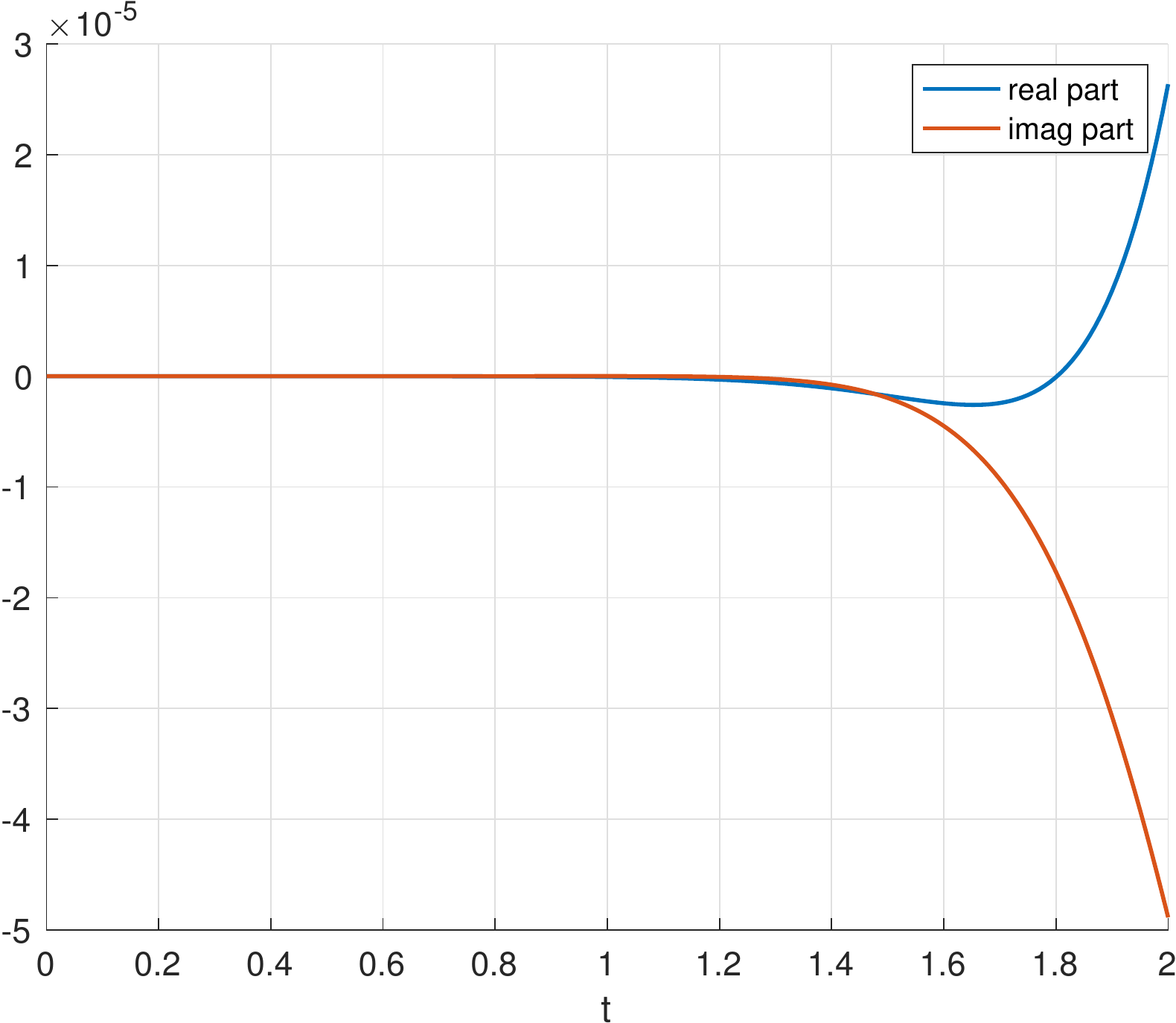}}\quad
	\subfigure[$(6, 8, 18, 11)$]{\includegraphics[scale=0.25]{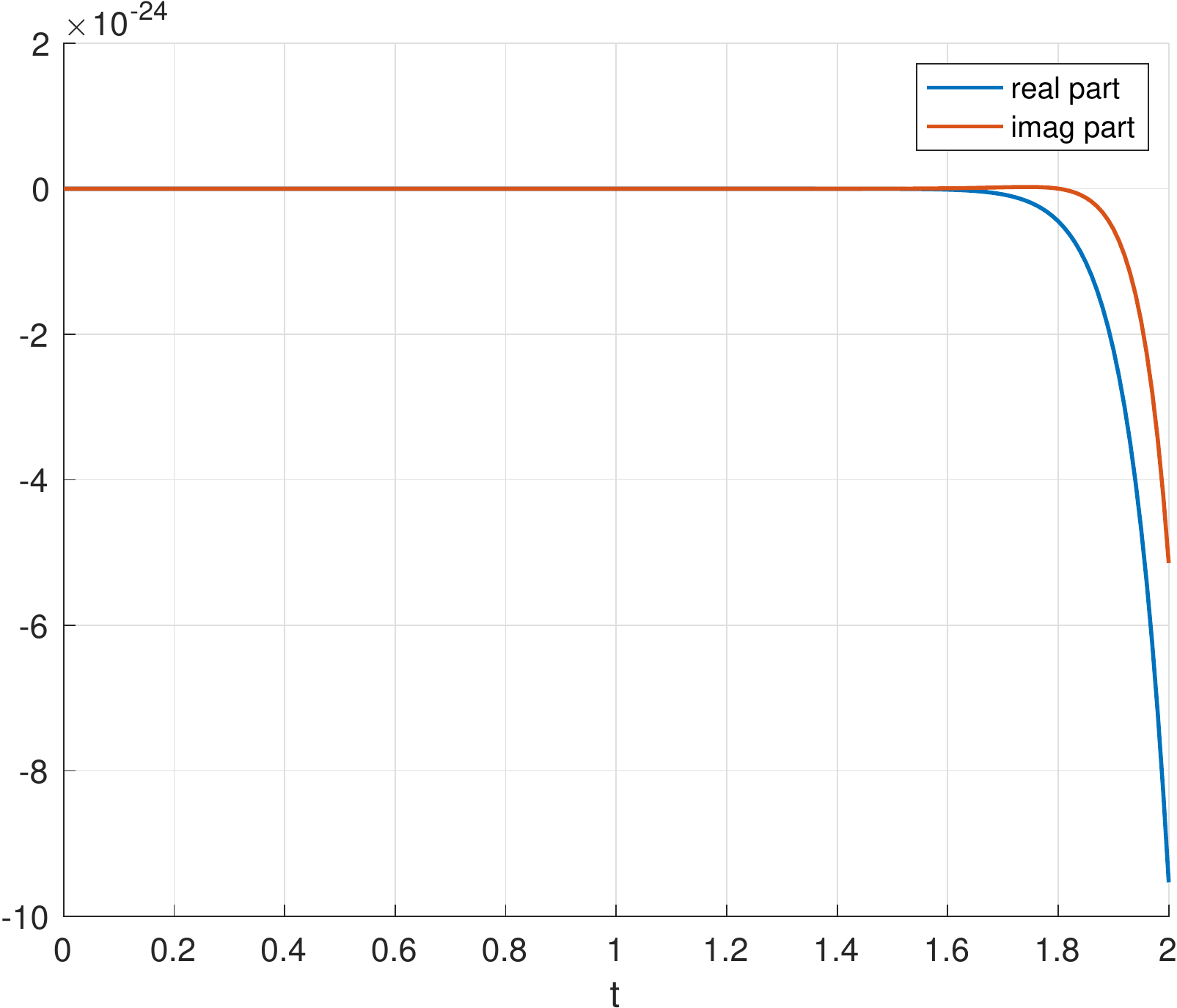}}
	\caption{Plots of integrand along the integration contour $\Gamma_{1}$ with
		$\ell=\ell^{\prime}=1$ and different $(n,n^{\prime},m,m^{\prime})$.}%
	\label{integrand1}%
\end{figure}
\begin{figure}[ht!]
	\centering
	\subfigure[$(1, 3, 3, 2)$]{\includegraphics[scale=0.25]{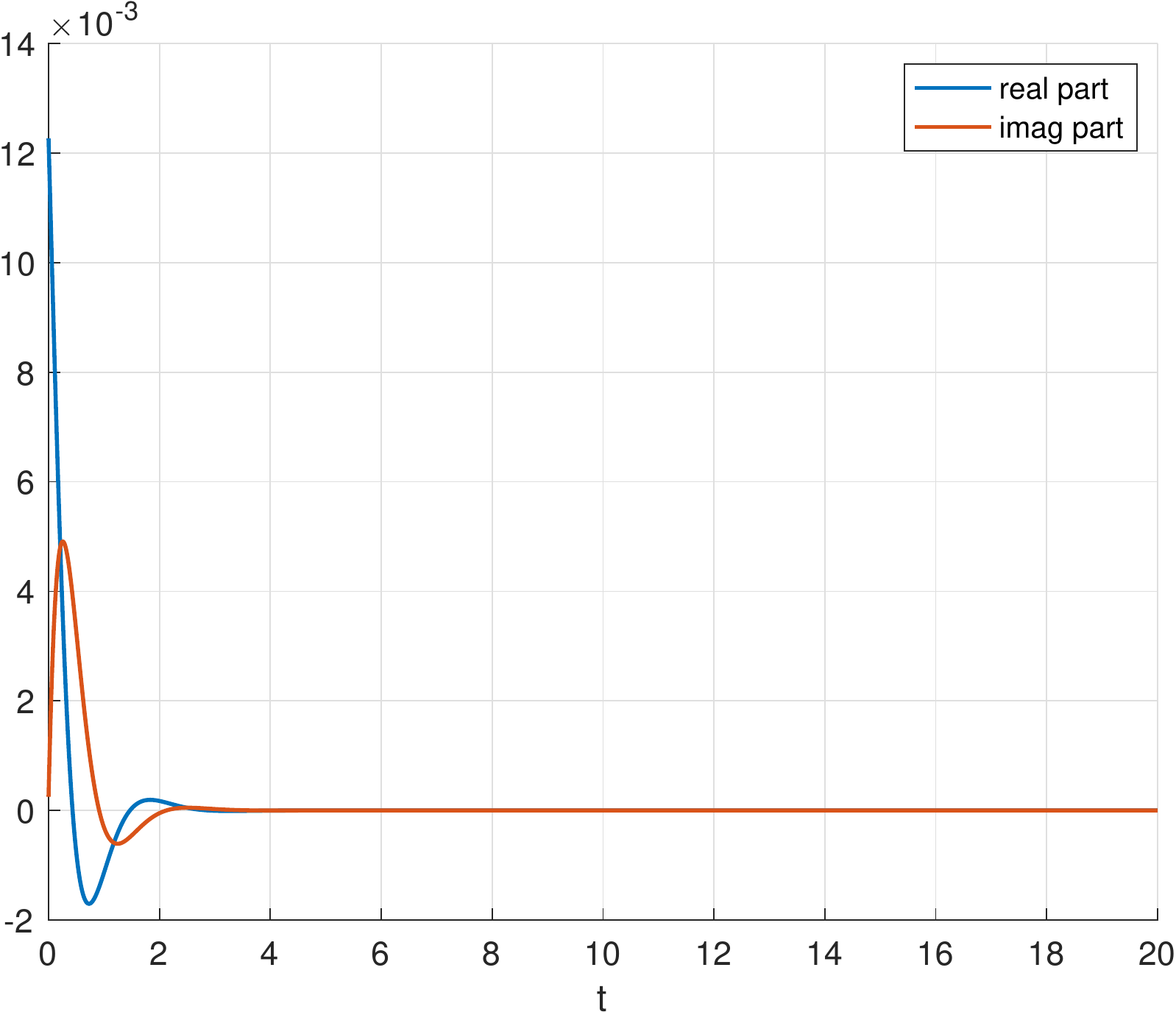}}\quad
	\subfigure[$(5, 6, 3, 2)$]{\includegraphics[scale=0.25]{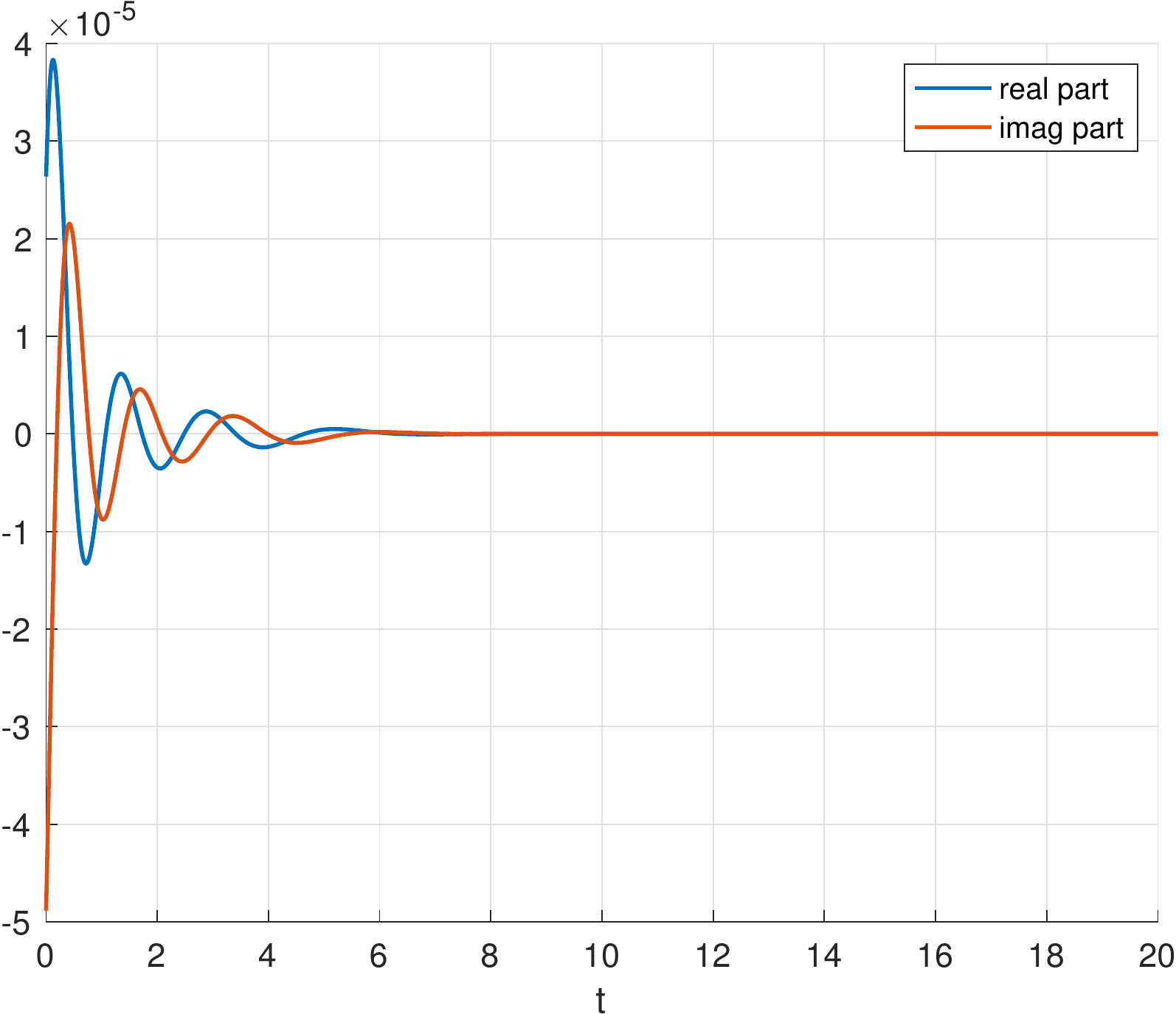}}\quad
	\subfigure[$(6, 8, 18, 11)$]{\includegraphics[scale=0.25]{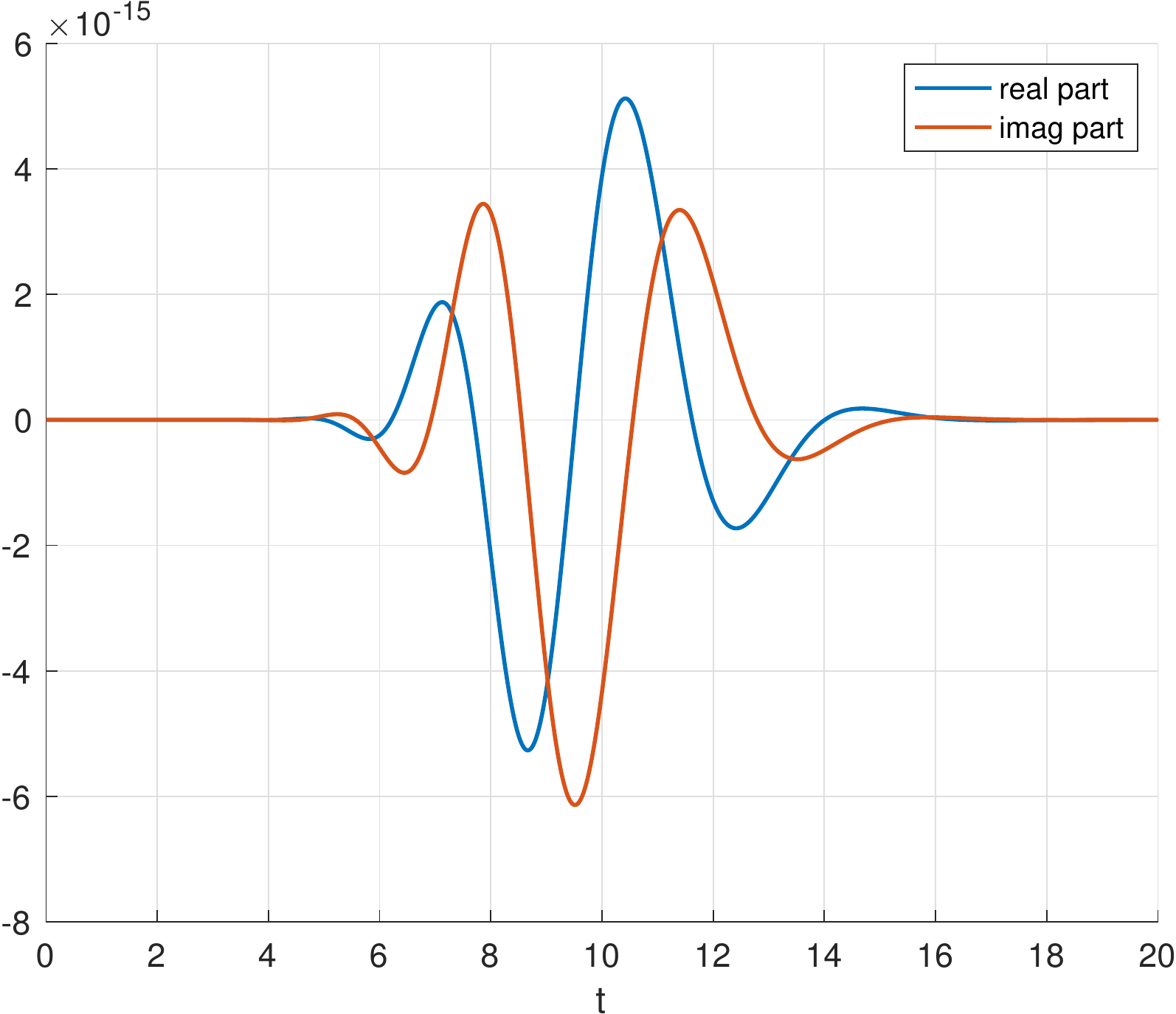}}
	\caption{Plots of integrand along the integration contour $\Gamma_{2}$ with
		$\ell=\ell^{\prime}=1$ and different $(n,n^{\prime},m,m^{\prime})$.}%
	\label{integrand2}%
\end{figure}
As an example, we plot the integrand in
\eqref{symmetricdetable} along $\Gamma_{1}$ and $\Gamma_{2}$ (see, Fig.
\ref{integrand1} and Fig. \ref{integrand2}). The three layers case with
$k_{0}=0.8,k_{1}=1.5,k_{2}=2.0,d=2.0,z=-0.3,z^{\prime}=-0.5$ and density given
in \eqref{densitythreelayer2} is used. We can see that the integrand has
exponential decay along $\Gamma_{2}$ as $t$ goes to infinity.

\begin{rem}
Similar to the first TE-FMM, the translation operators for center shift from
source boxes to their parents and from target boxes to their children are
exactly the same as in free space case which are given by
\eqref{me2mesymmetricte} and \eqref{le2lesymmetricte}.
\end{rem}

The algorithm using symmetric derivatives for general component
\eqref{generalsum} is as following:
\begin{algorithm}\label{algorithm3}
	\caption{TEFMM-II for general component \eqref{generalsum}}
	\begin{algorithmic}
		\State Generate an adaptive hierarchical tree structure and precompute tables.
		\State{\bf Upward pass:}
		\For{$l=H \to 0$}
		\For{all boxes $j$ on source tree level $l$ }
		\If{$j$ is a leaf node}
		\State{form the free-space TE using Eq. \eqref{mecoefficients}.}
		\Else
		\State form the free-space TE by merging children's expansions using the free-space center shift translation operator \eqref{me2mesymmetricte}.
		\EndIf
		\EndFor
		\EndFor
		\State{\bf Downward pass:}
		\For{$l=1 \to H$}
		\For{all boxes $j$ on target tree level $l$ }
		\State shift the TE of $j$'s parent to $j$ itself using the free-space center shift translation operator \eqref{le2lesymmetricte}.
		\State collect interaction list contribution using the source box to target box translation operator in Eq. \eqref{layerm2ltrans} with  precomputed tables of integrals \eqref{symmetricdetable}.
		\EndFor
		\EndFor
		\State {\bf Evaluate Local Expansions:}
		\For{each leaf node (childless box)}
		\State evaluate the local expansion at each particle location using \eqref{taylorexplocxclfree2}.
		\EndFor
		\State {\bf Local Direct Interactions:}
		\For{$i=1 \to N$ }
		\State compute Eq. \eqref{generalsum} of target particle $i$ in the neighboring boxes using precomputed table of $u_{\ell\ell'}^{\uparrow}(\bs r, \bs r')$.
		\EndFor
	\end{algorithmic}
\end{algorithm}

\section{Numerical results}

In this section, we present numerical results to demonstrate the performance
of two versions of TE-FMMs for acoustic wave scattering in layered media.
These algorithms are implemented based on an open-source FMM package DASHMM
\cite{debuhr2016dashmm}. The numerical simulations are performed on a
workstation with two Xeon E5-2699 v4 2.2 GHz processors (each has 22 cores)
and 500GB RAM using the gcc compiler version 6.3. Two and three layers media
are considered for the numerical tests. More specifically, interfaces are
placed at $z_{0}=0$ and $z_{0}=0,z_{1}=-2$ for two and three layer cases,
respectively. We first use an example with particles uniformly distributed
inside a cubic domain for accuracy and efficiency test. Then, more general
distributions of particles in irregular domains are tested.

\textbf{Example 1 (Cubic domains): }Set particles to be uniformly distributed
in cubes of size $1$ centered at $(0.5,0.5,1.0)$, $(0.5,0.5,-1.0)$ and
$(0.5,0.5,-3.0)$, respectively. Let $\widetilde{\Phi}_{\ell}%
(\boldsymbol{r}_{\ell i})$ be the approximated values of $\Phi_{\ell
}(\boldsymbol{r}_{\ell i})$ calculated by TE-FEM. For accuracy test, we put
$N=8000$ particles in each box and define $L^{2}$-error and maximum error as
\begin{equation}
Err_{2}^{\ell}:=\sqrt{\frac{\sum\limits_{i=1}^{N_{\ell}}|\Phi_{\ell
}(\boldsymbol{r}_{\ell i})-\widetilde{\Phi}_{\ell}(\boldsymbol{r}_{\ell
i})|^{2}}{\sum\limits_{i=1}^{N_{\ell}}|\Phi_{\ell}(\boldsymbol{r}_{\ell
i})|^{2}}},\qquad Err_{max}^{\ell}:=\max\limits_{1\leq i\leq{N_{\ell}}}%
\frac{|\Phi_{\ell}(\boldsymbol{r}_{\ell i})-\widetilde{\Phi}_{\ell
}(\boldsymbol{r}_{\ell i})|}{|\Phi_{\ell}(\boldsymbol{r}_{\ell i})|}.
\end{equation}
Convergence rates against $p$ are depicted in Figs. \ref{errorplot} and
\ref{errorplot2}. Comparisons between CPU time for the computation of free
space components $\Phi_{\ell}^{free}$ and scattering components in two and
three layers are presented in Tables \ref{Table:ex1two}-\ref{Table:ex1three}
and Tables \ref{Table:ex1two2}-\ref{Table:ex1three2}. We can see that the cost
for the computation of scattering components is about forty times of that for
the free space components. \begin{figure}[ptbh]
\center
\subfigure[two layers]{\includegraphics[scale=0.35]{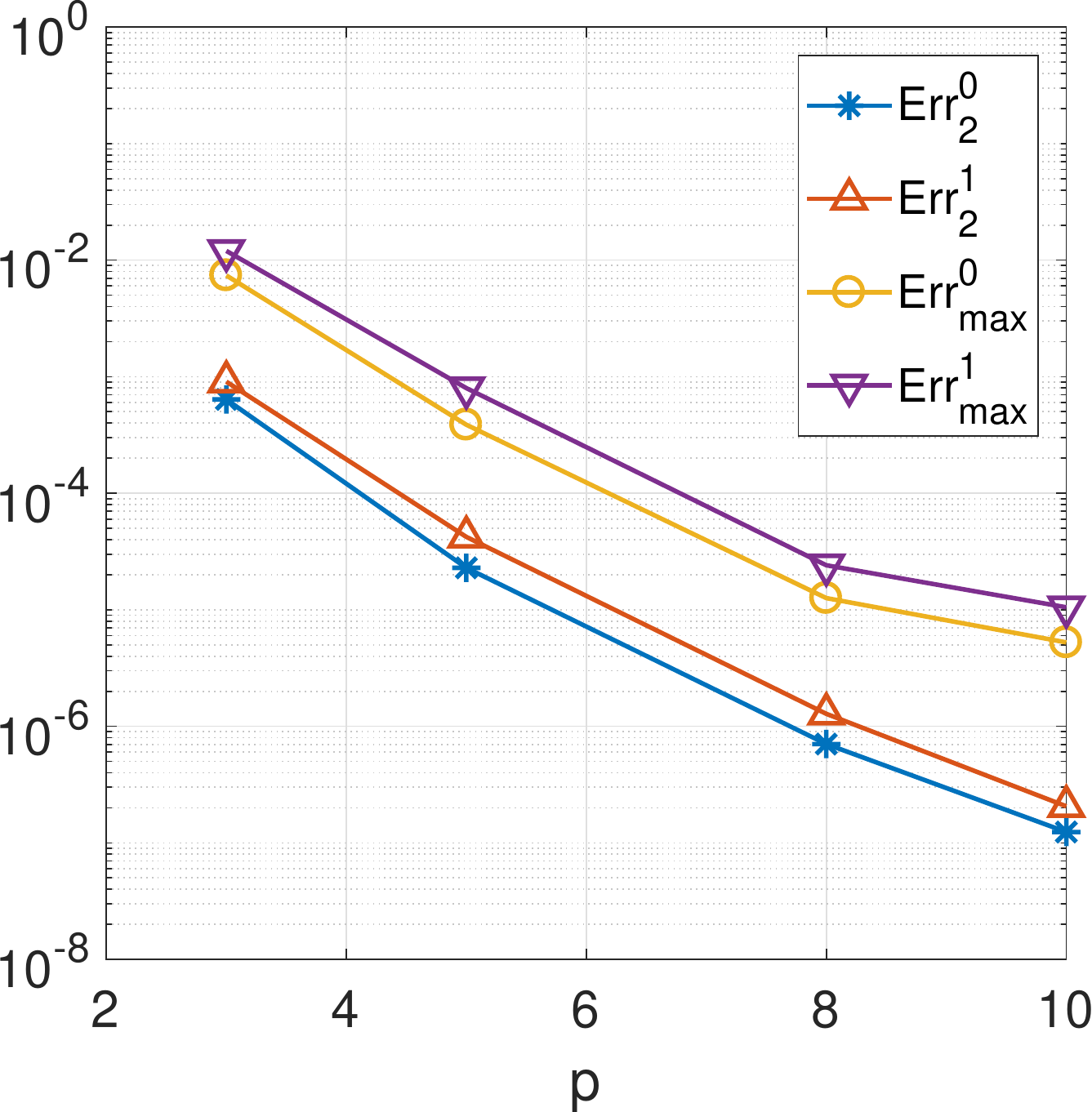}}\qquad
\qquad
\subfigure[three layers]{\includegraphics[scale=0.35]{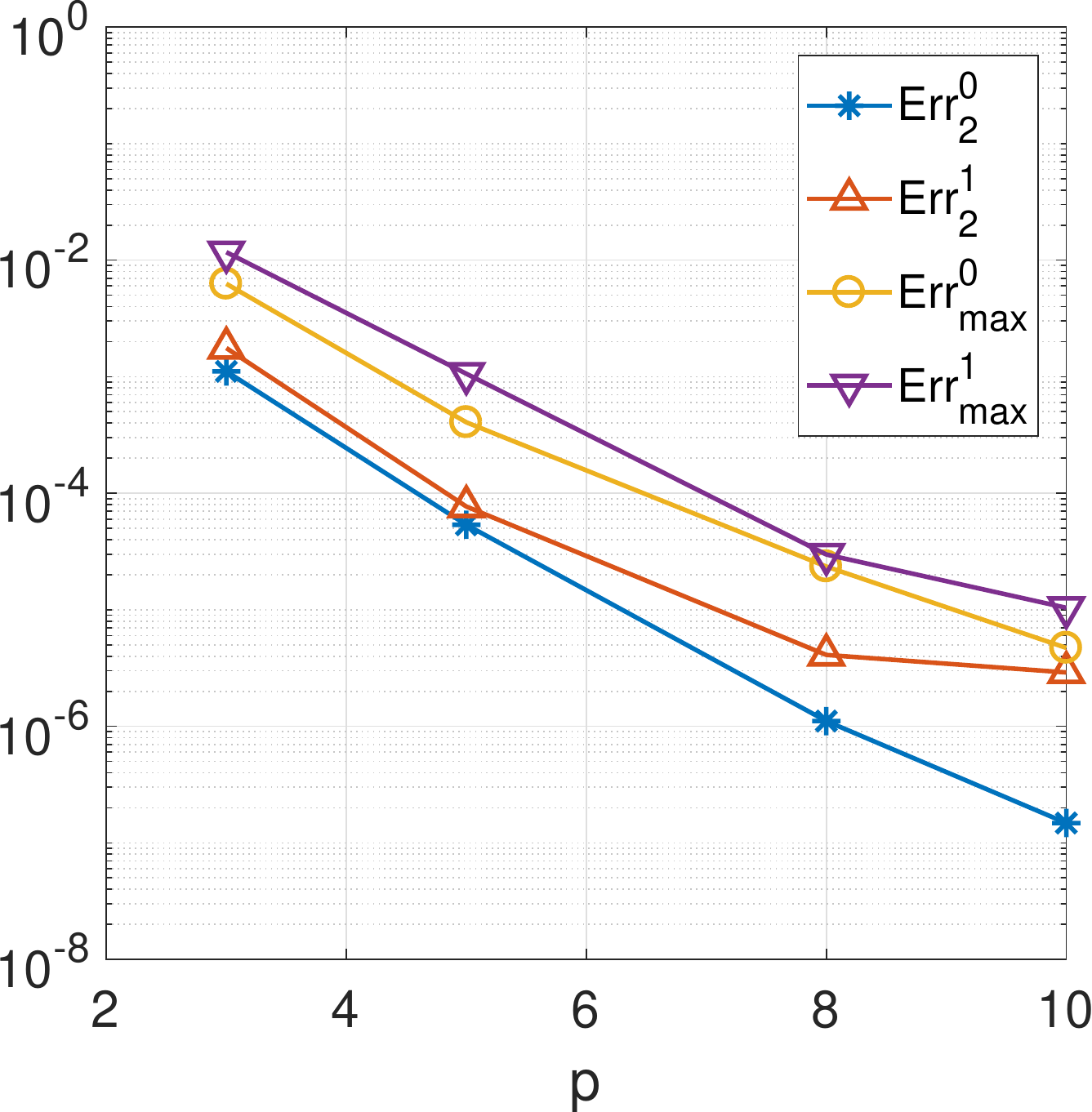}}
\caption{Convergence of TEFMM-I against truncation number $p$.}%
\label{errorplot}%
\end{figure}\begin{figure}[ptbhptbh]
\center
\subfigure[two layers]{\includegraphics[scale=0.35]{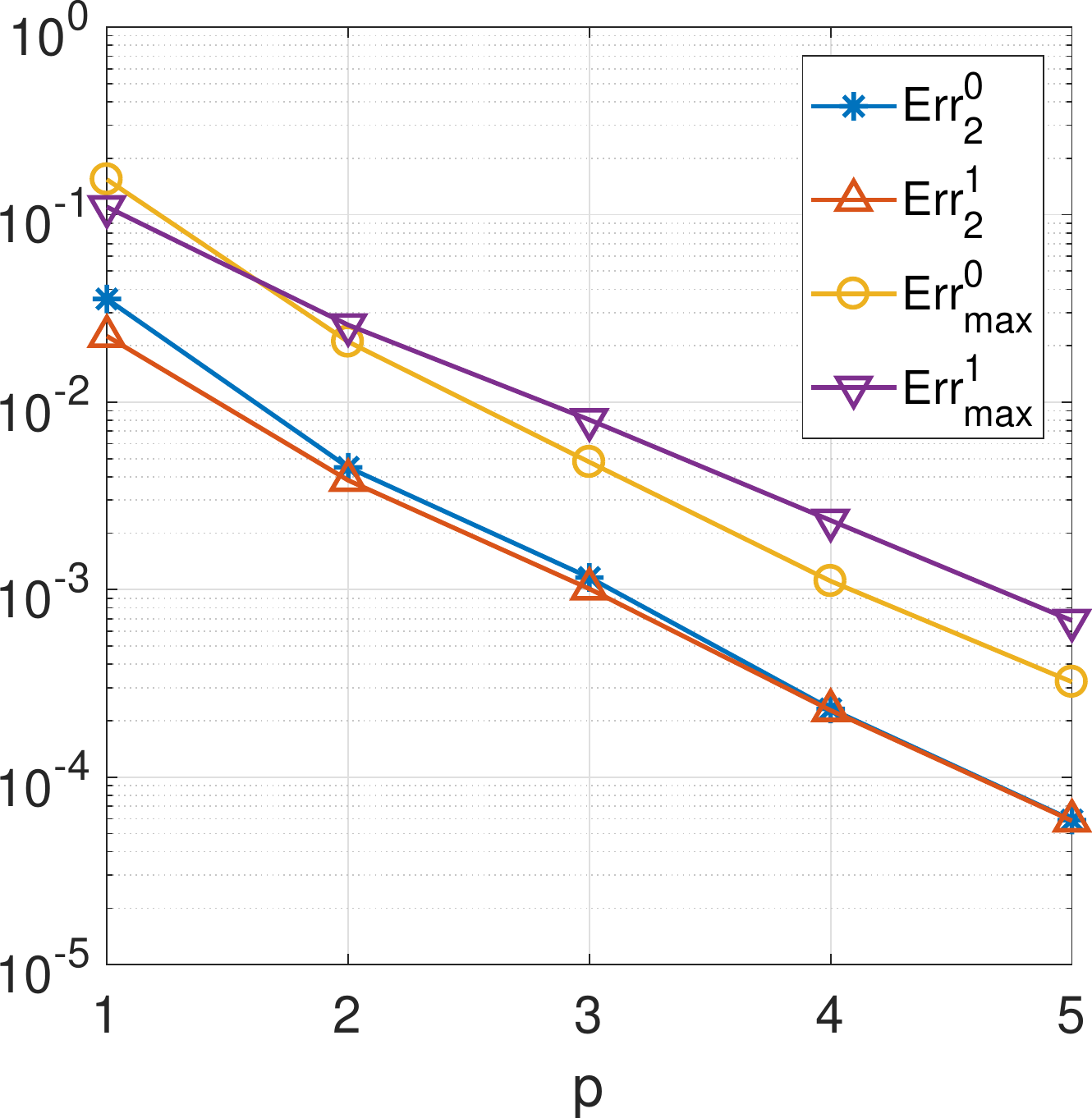}}\qquad
\qquad
\subfigure[three layers]{\includegraphics[scale=0.35]{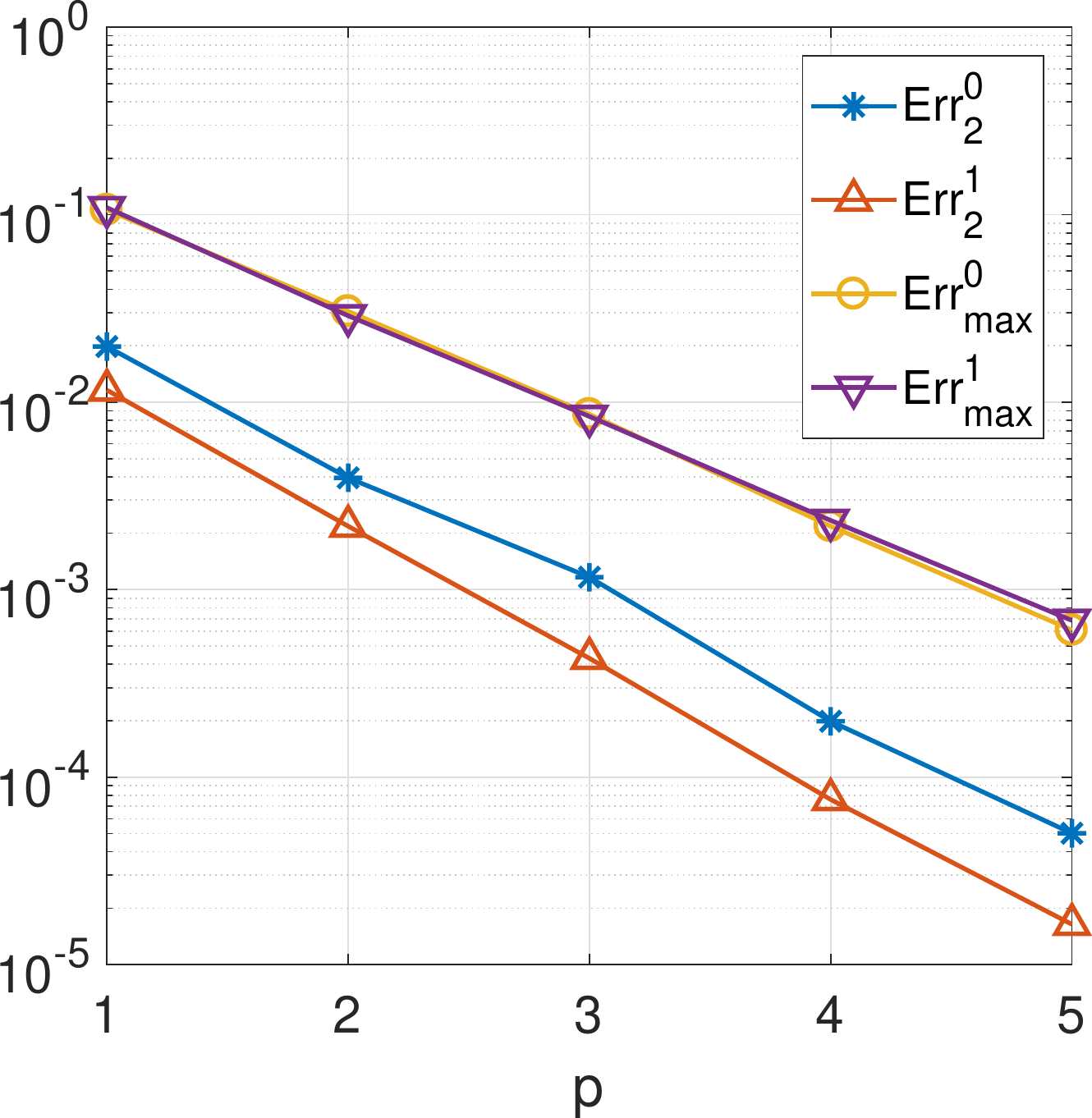}}
\caption{Convergence of TEFMM-II against truncation number $p$.}%
\label{errorplot2}%
\end{figure}\begin{table}[ptbhptbhptbh]
\centering {\small
\begin{tabular}
[c]{|c|c|c|c|c|c|}\hline
cores & $N$ & time for $\Phi_{0}^{free}$ & time for $\Phi_{00}^{\uparrow}%
+\Phi_{01}^{\uparrow}$ & time for $\Phi_{1}^{free}$ & time for $\Phi
_{10}^{\downarrow}+\Phi_{11}^{\downarrow}$\\\hline
\multirow{4}{*}{1} & 64000 & 4.11 & 120.11 & 4.00 & 135.57\\\cline{2-6}
& 216000 & 25.65 & 902.09 & 25.73 & 1005.43\\\cline{2-6}
& 512000 & 36.80 & 1120.73 & 36.58 & 1385.88\\\cline{2-6}
& 1000000 & 61.84 & 1422.70 & 63.07 & 1539.35\\\hline
\multirow{4}{*}{22} & 64000 & 0.25 & 7.08 & 0.23 & 7.85\\\cline{2-6}
& 216000 & 1.54 & 52.79 & 1.55 & 59.04\\\cline{2-6}
& 512000 & 2.21 & 65.88 & 2.18 & 73.65\\\cline{2-6}
& 1000000 & 3.75 & 81.39 & 3.65 & 90.25\\\hline
\multirow{4}{*}{44} & 64000 & 0.17 & 3.63 & 0.17 & 3.95\\\cline{2-6}
& 216000 & 1.10 & 26.60 & 1.09 & 29.86\\\cline{2-6}
& 512000 & 1.44 & 33.42 & 1.44 & 37.31\\\cline{2-6}
& 1000000 & 1.92 & 41.34 & 1.88 & 45.84\\\hline
\end{tabular}
}\caption{CPU time for two layers using TEFMM-I with $p=3$.}%
\label{Table:ex1two}%
\end{table}\begin{table}[ptbhptbhptbhptbh]
\centering {\small
\begin{tabular}
[c]{|c|c|c|c|c|c|}\hline
cores & $N$ & time for $\Phi_{0}^{free}$ & time for $\sum\limits_{\ell
^{\prime}=0}^{2}\Phi_{0\ell^{\prime}}^{\uparrow}$ & time for $\Phi_{1}^{free}$
& time for $\sum\limits_{\ell^{\prime}=0}^{2}\Phi_{1\ell^{\prime}}^{\uparrow}%
$\\\hline
\multirow{4}{*}{1} & 64000 & 3.94 & 109.61 & 3.97 & 134.37\\\cline{2-6}
& 216000 & 25.60 & 824.29 & 25.76 & 1016.34\\\cline{2-6}
& 512000 & 36.74 & 1034.80 & 36.51 & 1266.90\\\cline{2-6}
& 1000000 & 62.67 & 1286.97 & 60.39 & 1551.25\\\hline
\multirow{4}{*}{22} & 64000 & 0.25 & 6.47 & 0.24 & 7.90\\\cline{2-6}
& 216000 & 1.55 & 48.26 & 1.55 & 59.32\\\cline{2-6}
& 512000 & 2.21 & 60.36 & 2.19 & 73.81\\\cline{2-6}
& 1000000 & 3.75 & 75.05 & 3.65 & 90.67\\\hline
\multirow{4}{*}{44} & 64000 & 0.16 & 3.34 & 0.17 & 3.99\\\cline{2-6}
& 216000 & 1.09 & 24.38 & 1.09 & 30.06\\\cline{2-6}
& 512000 & 1.60 & 30.73 & 1.62 & 37.75\\\cline{2-6}
& 1000000 & 1.93 & 38.19 & 1.87 & 46.08\\\hline
\end{tabular}
}\caption{CPU time for three layers using TEFMM-I with $p=3$.}%
\label{Table:ex1three}%
\end{table}\begin{table}[ptbhptbhptbhptbhptbh]
\centering {\small
\begin{tabular}
[c]{|c|c|c|c|c|c|}\hline
cores & $N$ & time for $\Phi_{0}^{free}$ & time for $\Phi_{00}^{\uparrow}%
+\Phi_{01}^{\uparrow}$ & time for $\Phi_{1}^{free}$ & time for $\Phi
_{10}^{\downarrow}+\Phi_{11}^{\downarrow}$\\\hline
\multirow{4}{*}{1} & 64000 & 7.65 & 120.27 & 7.37 & 117.20\\\cline{2-6}
& 216000 & 60.76 & 610.60 & 59.30 & 663.51\\\cline{2-6}
& 512000 & 60.80 & 1071.95 & 60.06 & 1049.38\\\cline{2-6}
& 1000000 & 120.18 & 1153.79 & 118.06 & 1146.19\\\hline
\multirow{4}{*}{22} & 64000 & 0.38 & 9.48 & 0.38 & 9.64\\\cline{2-6}
& 216000 & 3.41 & 54.53 & 3.43 & 55.94\\\cline{2-6}
& 512000 & 3.47 & 74.21 & 3.43 & 74.84\\\cline{2-6}
& 1000000 & 6.49 & 79.62 & 6.37 & 82.53\\\hline
\multirow{4}{*}{44} & 64000 & 0.23 & 8.35 & 0.21 & 8.72\\\cline{2-6}
& 216000 & 1.74 & 47.62 & 1.76 & 47.09\\\cline{2-6}
& 512000 & 1.75 & 66.88 & 1.73 & 65.65\\\cline{2-6}
& 1000000 & 3.36 & 67.56 & 3.29 & 65.50\\\hline
\end{tabular}
}\caption{CPU time for two layers using TEFMM-II with $p=3$.}%
\label{Table:ex1two2}%
\end{table}\begin{table}[ptbhptbhptbhptbhptbhth]
\centering {\small
\begin{tabular}
[c]{|c|c|c|c|c|c|}\hline
cores & $N$ & time for $\Phi_{0}^{free}$ & time for $\sum\limits_{\ell
^{\prime}=0}^{2}\Phi_{0\ell^{\prime}}^{\uparrow}$ & time for $\Phi_{1}^{free}$
& time for $\sum\limits_{\ell^{\prime}=0}^{2}\Phi_{1\ell^{\prime}}^{\uparrow}%
$\\\hline
\multirow{4}{*}{1} & 64000 & 6.66 & 97.98 & 6.58 & 99.07\\\cline{2-6}
& 216000 & 60.42 & 651.24 & 61.14 & 657.92\\\cline{2-6}
& 512000 & 64.91 & 912.75 & 63.15 & 903.19\\\cline{2-6}
& 1000000 & 117.90 & 1101.38 & 116.92 & 1304.45\\\hline
\multirow{4}{*}{22} & 64000 & 0.38 & 9.64 & 0.38 & 9.69\\\cline{2-6}
& 216000 & 3.41 & 54.79 & 3.42 & 56.29\\\cline{2-6}
& 512000 & 3.47 & 75.98 & 3.41 & 75.91\\\cline{2-6}
& 1000000 & 6.49 & 81.98 & 6.37 & 84.57\\\hline
\multirow{4}{*}{44} & 64000 & 0.22 & 8.06 & 0.22 & 8.52\\\cline{2-6}
& 216000 & 1.77 & 47.69 & 1.80 & 46.63\\\cline{2-6}
& 512000 & 1.74 & 66.45 & 1.74 & 65.43\\\cline{2-6}
& 1000000 & 3.35 & 66.85 & 3.26 & 70.04\\\hline
\end{tabular}
}\caption{CPU time for three layers using TEFMM-II with $p=3$.}%
\label{Table:ex1three2}%
\end{table}

\textbf{Example 2 (Irregular domains): }In practical applications, objects of
irregular shape are often encountered. Here, we give examples with particles
located in irregular domains which are obtained by shifting the domain given
by
\begin{equation}
r=0.5-a+\frac{a}{8}(35\cos^{4}\theta-30\cos^{2}\theta+3),
\end{equation}
with $a=0.1,0.15$ to new centers $(0,0,1)$ and $(0,0,-1)$, respectively (see
Fig. \ref{expconfigure} (left) for an illustration). For the three layer case
test, we use particles in similar domains centered at $(0,0,1)$, $(0,0,-1)$
and $(0,0,-3)$ with $a=0.1,0.15,0.05$, respectively (see Fig.
\ref{expconfigure} (right)). All particles are generated by keeping the
uniform distributed particles in a larger cubic within corresponding irregular
domains. The CPU time for the computation of $\{\Phi_{\ell}%
(\boldsymbol{r}_{\ell i})\}_{i=0}^{N_{\ell}}$ and $\{\Phi_{\ell}%
^{free}(\boldsymbol{r}_{\ell i})\}_{i=0}^{N_{\ell}}$ are compared in Fig.
\ref{TEFMM1perform} and Fig. \ref{TEFMM2perform}. It shows that the new
algorithms have an $O(N)$ complexity.

\begin{figure}[ptbh]
\center  \includegraphics[scale=0.30]{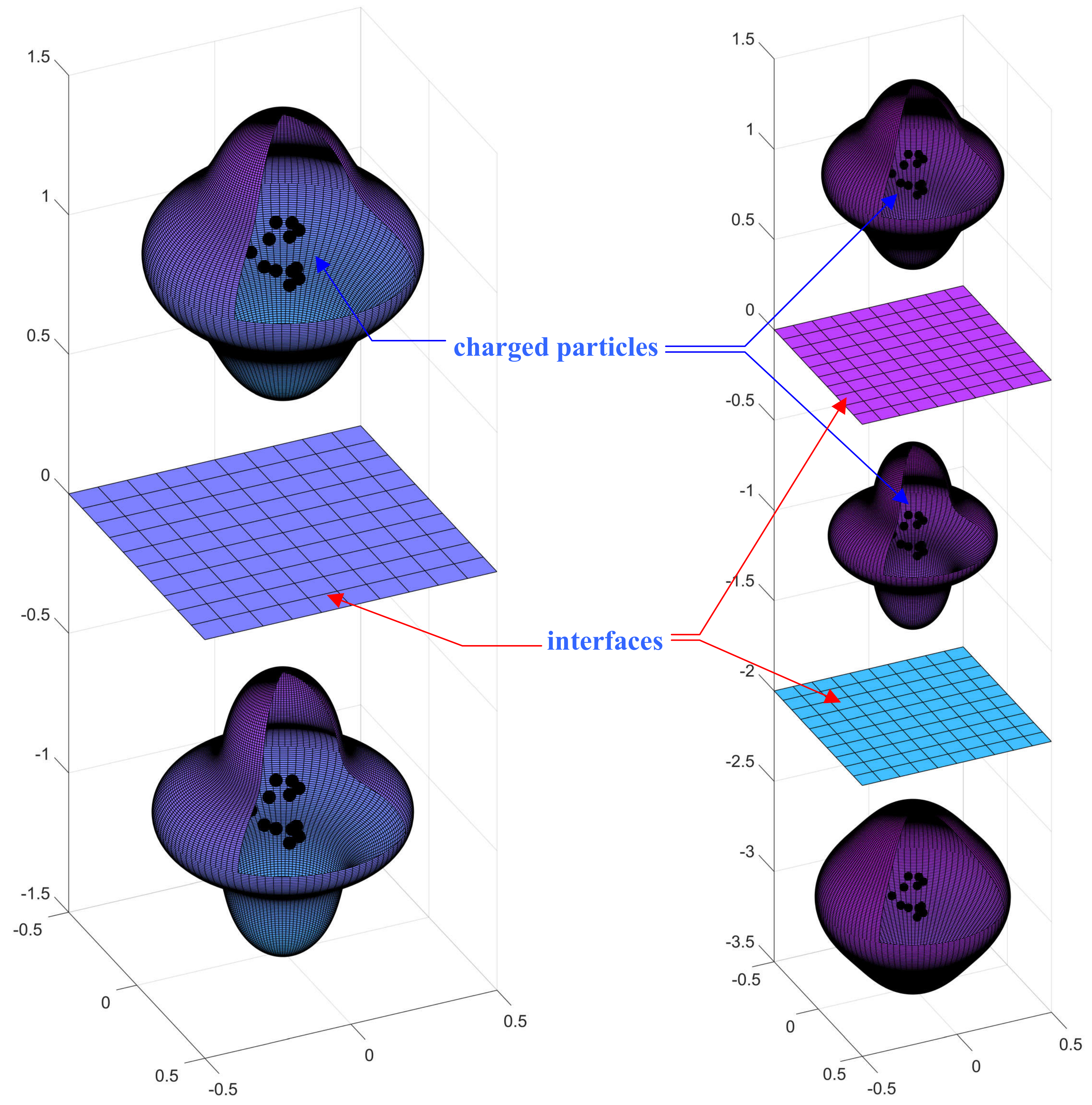}  \caption{Configuration
of two numerical examples.}%
\label{expconfigure}%
\end{figure}\begin{figure}[ptbhptbh]
\center
\subfigure[two layers]{\includegraphics[scale=0.35]{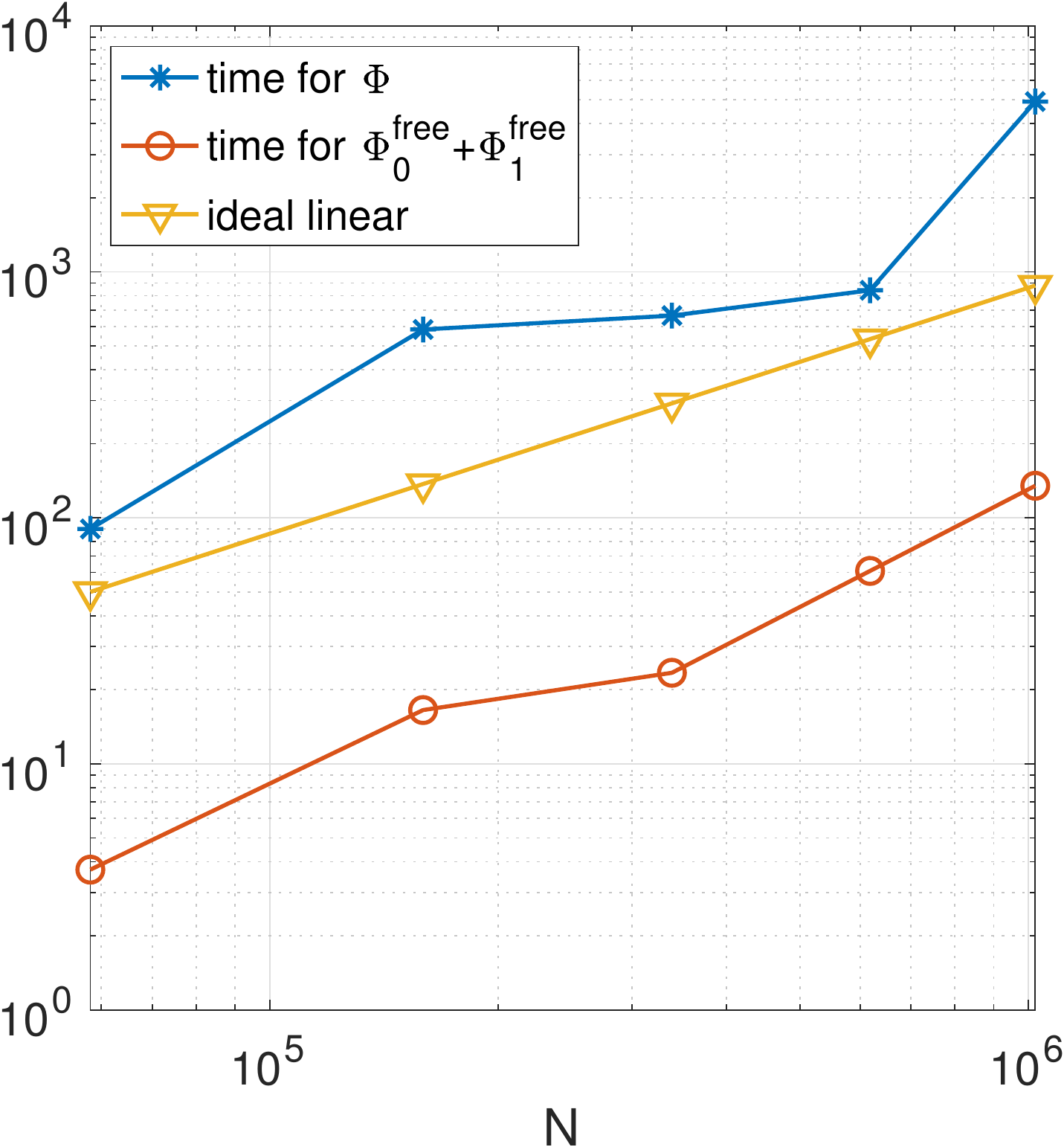}}\qquad
\qquad
\subfigure[three layers]{\includegraphics[scale=0.35]{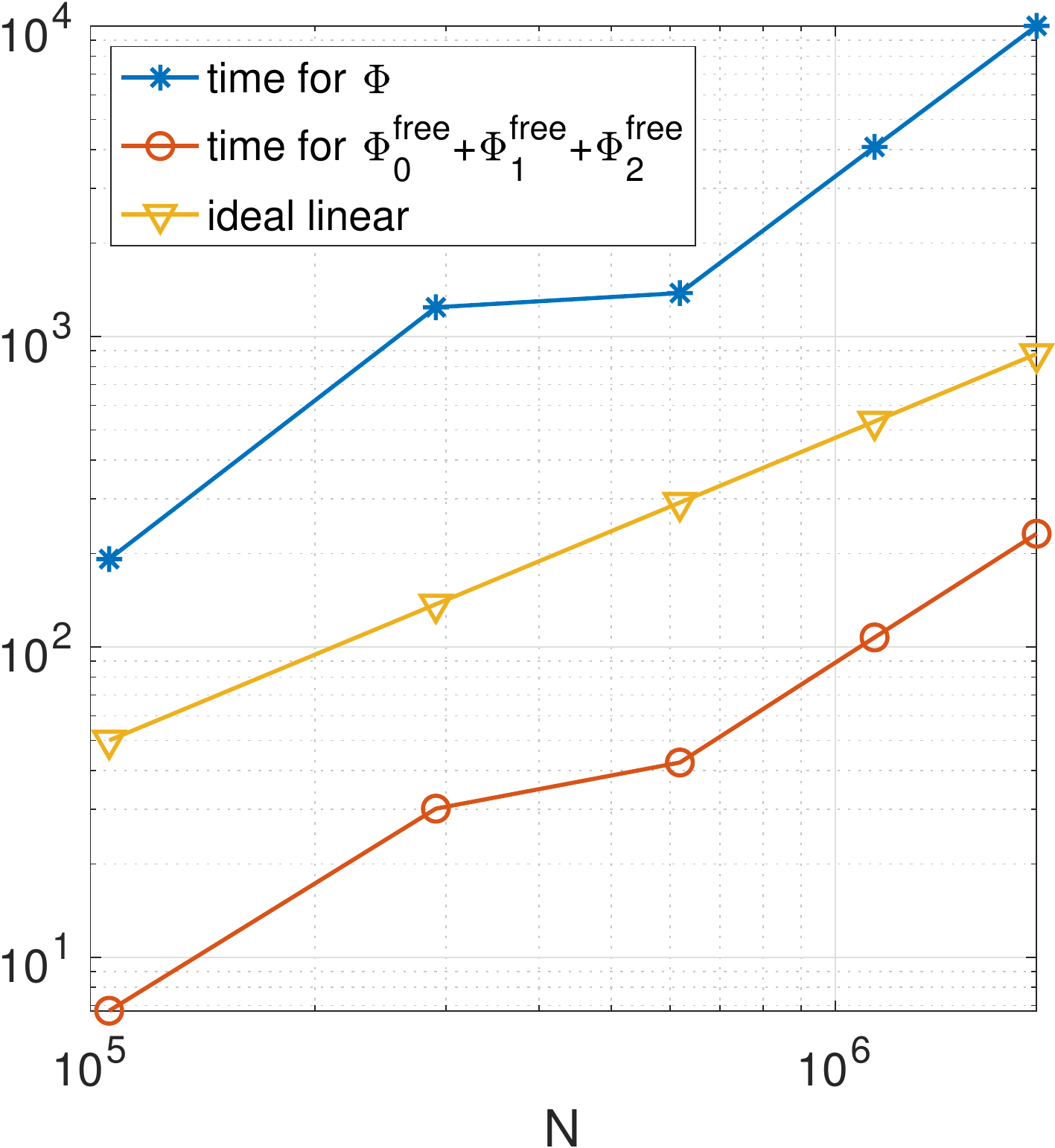}}
\caption{CPU time for TEFMM-I.}%
\label{TEFMM1perform}%
\end{figure}\begin{figure}[ptbhptbhptbh]
\center
\subfigure[two layers]{\includegraphics[scale=0.35]{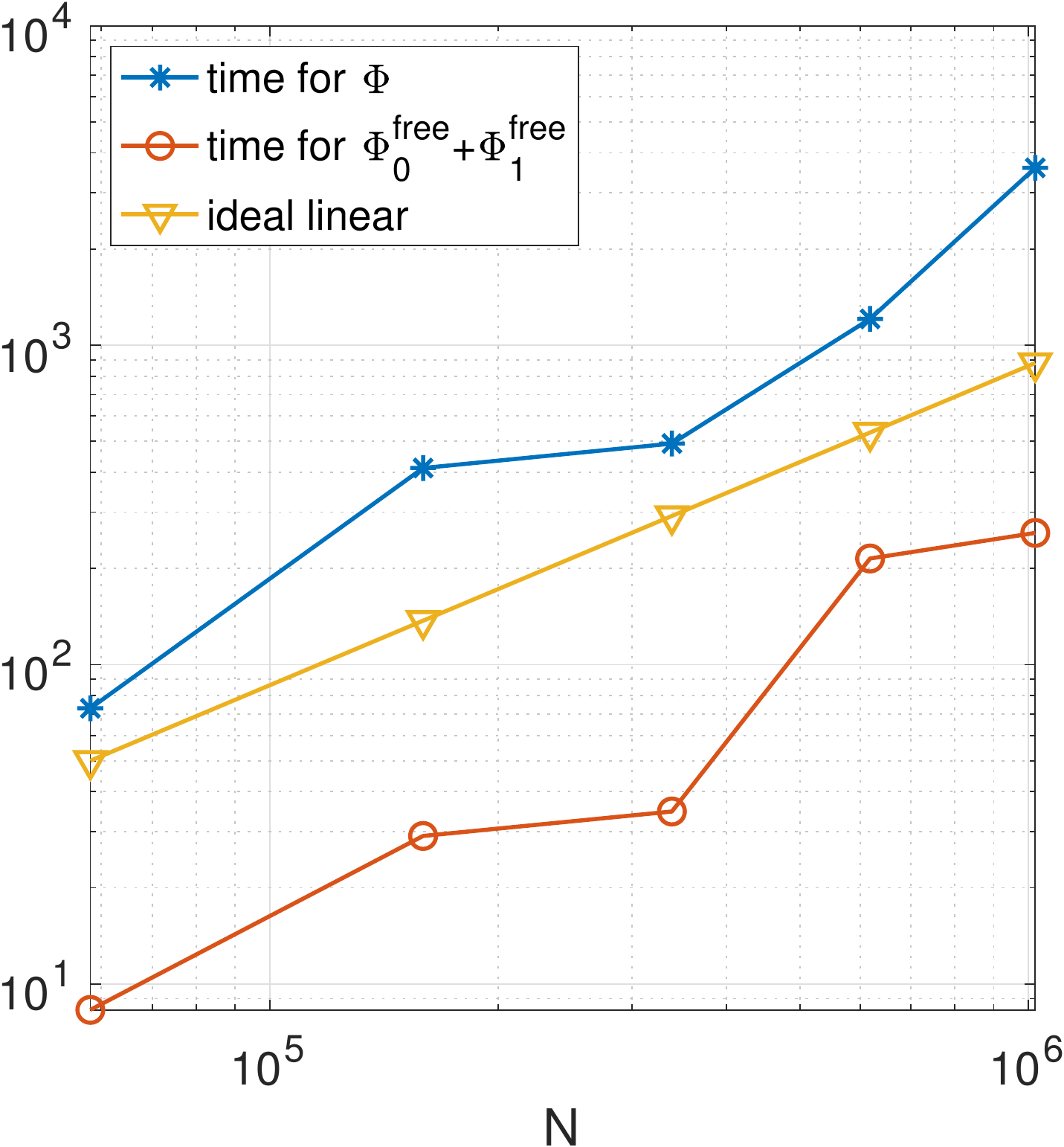}}\qquad
\qquad
\subfigure[three layers]{\includegraphics[scale=0.35]{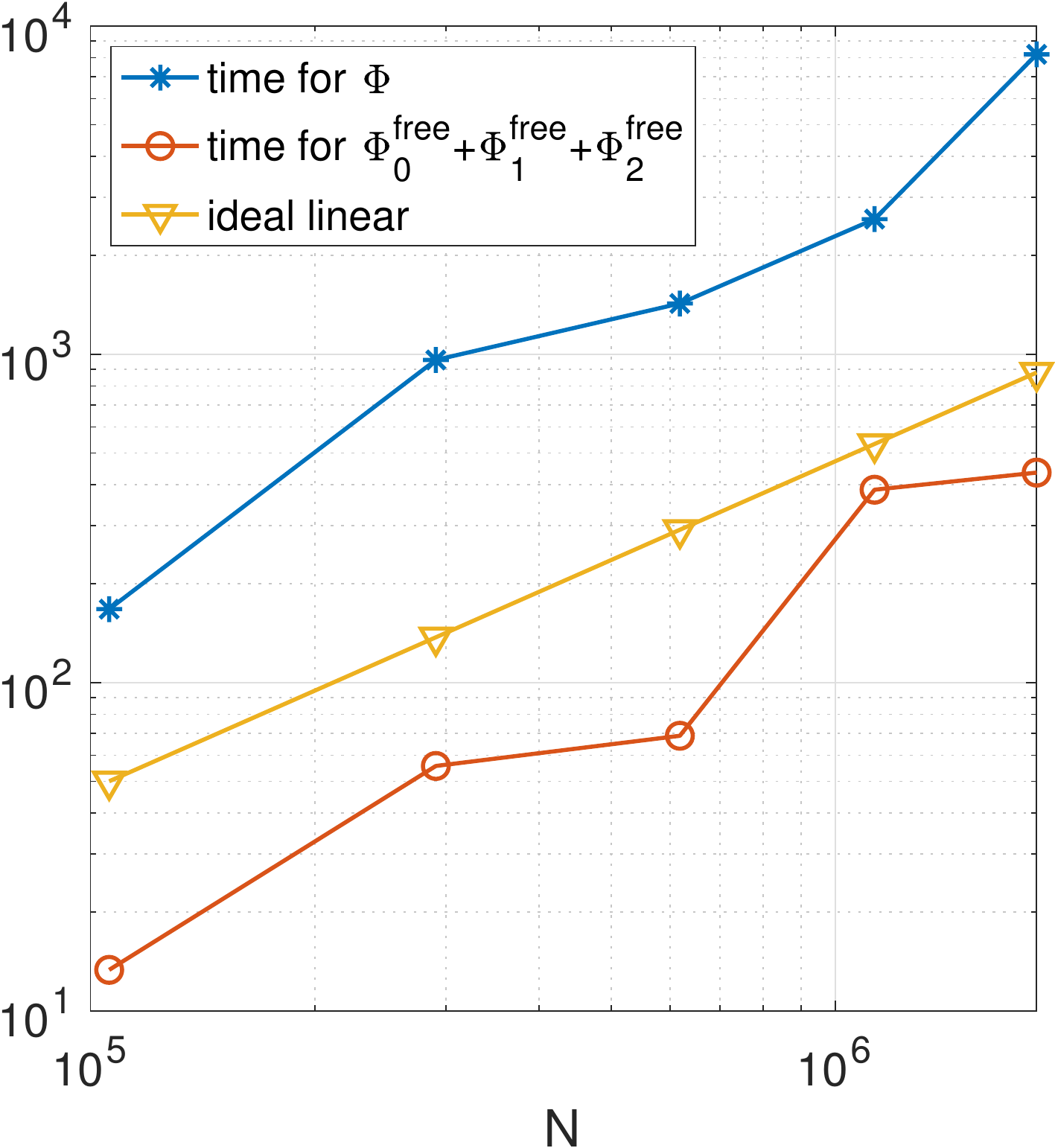}}
\caption{CPU time for TEFMM-II.}%
\label{TEFMM2perform}%
\end{figure}

\section{Conclusion}

In this paper, we have presented two Taylor-expansion based fast multipole
method for the efficient calculation of the discretized integral operator for
the Helmholtz equation in layered media. These methods use the Taylor
expansion of layered media Green's function for the low rank representation of
far field for acoustic wave scattering governed by the Helmholtz equation.
Comparing with the spherical harmonic multipole expansion in the traditional
FMM, the Taylor expansion requires $O(p^{3})$ terms for the low rank
representation of far field of layered Green's function in contrast to
$O(p^{2})$ for the spherical harmonic based multipole expansion FMM for the
free space Green's function. We addressed the main difficulty in developing
the TE-FMM for the layered media - the computation of up to $p$-th order
derivatives of the layered Green's function, which are given in terms of
oscillatory Sommerfeld integrals. We proposed two solutions to overcome this
difficulty. For the first TE-FMM\ based on non-symmetric derivatives, an
efficient algorithm was developed using discrete complex image, which has
shown to be very accurate and efficient for the low frequency Helmholtz
equation. Meanwhile, for the second TE-FMM\ based on symmetric derivatives,
pre-calculated tables are used for the translation operators.

Both versions of the TE-FMM have comparable accuracy, and  as our numerical examples show,  both have an $O(N)$ time complexity
similar to the FMM in the free space and they can provide fast solutions for
integral equations of Helmholtz equations in layered media with low to middle frequencies.
In comparison, the advantage of the first TE-FMM is the efficiency
of computing the translation operator with the complex image approximations. However, the complex image approximation is sensitive to the parameters and we still need a rigorous mathematical theory for the approach in finding the discrete images. On the other hand, the second TE-FMM could be used for higher order TE expansions, however, there is a need to pre-compute
large number of tables for the translation operators and it requires table storages.

For the future work, we will carry out error estimate of the TE-FMMs for the
layered media, which require an analysis of the Sommerfeld integral
representations of the derivatives and the complex image approximations.

\section{Appendix A}

\subsection{Two layers with sources in the top layer}

Let $L=1$ with source in the bottom layer at $\boldsymbol{r}^{\prime
}=(x^{\prime}, y^{\prime}, z^{\prime})$, i.e., $z^{\prime}>0$. Then, the
domain Green's function has representation
\begin{equation}
\label{greenspectraltwolayer3}%
\begin{cases}
\widehat u_{0}(k_{x},k_{y}, z)=A_{0}\cosh(\ri k_{0z}z)+B_{0}\sinh
(\ri k_{0z}z)+\frac{\ri e^{\ri(k_{0z}|z-z^{\prime}|-k_{x}x^{\prime}%
-k_{y}y^{\prime})}}{2k_{0z}}, & z>0,\\[7pt]%
\widehat u_{1}(k_{x},k_{y}, z)=A_{1}\cosh(\ri k_{1z}z)+B_{1}\sinh
(\ri k_{1z}z), & z<0,
\end{cases}
\end{equation}
or equivalently
\begin{equation}
\label{greenspectraltwolayer4}%
\begin{cases}
\widehat u_{0}(k_{x},k_{y}, z)=b_{0}e^{\ri k_{0z}z}+\frac{\ri e^{\ri(k_{0z}%
|z-z^{\prime}|-k_{x}x^{\prime}-k_{y}y^{\prime})}}{2k_{0z}}, & z>0,\\[7pt]%
\widehat u_{1}(k_{x},k_{y}, z)=a_{1}e^{-\ri k_{1z}z}, & z<0,
\end{cases}
\end{equation}
where
\begin{equation}%
\begin{split}
&  b_{0}=\frac{A_{0}+B_{0}}{2},\quad a_{1}=\frac{A_{1}-B_{1}}{2}.
\end{split}
\end{equation}
Proceeding the recursion \eqref{recursion} gives coefficients
\begin{equation}
\label{coefftwolayer3}%
\begin{cases}
\displaystyle A_{0}=B_{0}=\frac{(k_{0}k_{0z}-k_{1}k_{1z})e^{\ri k_{0z}%
z^{\prime}}}{2(k_{0}k_{0z}+k_{1}k_{1z})}\frac{\ri e^{-\ri(k_{x}x^{\prime
}+k_{y}y^{\prime})}}{k_{0z}},\\[10pt]%
\displaystyle A_{1}=-B_{1}=\frac{k_{0}k_{0z}e^{\ri k_{0z}z^{\prime}}}%
{k_{0}k_{0z}+k_{1}k_{1z}}\frac{\ri e^{-\ri(k_{x}x^{\prime}+k_{y}y^{\prime})}%
}{k_{0z}},
\end{cases}
\end{equation}
or alternatively
\begin{equation}
\label{coefftwolayer4}%
\begin{cases}
\displaystyle b_{0}=\frac{k_{0}k_{0z}-k_{1}k_{1z}}{2(k_{0}k_{0z}+k_{1}k_{1z}%
)}\frac{\ri e^{-\ri(k_{x}x^{\prime}+k_{y}y^{\prime})}e^{\ri k_{0z}z^{\prime}}%
}{ k_{0z}},\\[10pt]%
\displaystyle a_{1}=\frac{k_{0}k_{1z}}{k_{0}k_{0z}+k_{1}k_{1z}}\frac
{\ri e^{-\ri(k_{x}x^{\prime}+k_{y}y^{\prime})}e^{\ri k_{0z}z^{\prime}}}{
k_{1z}}.
\end{cases}
\end{equation}
Taking inverse Fourier transform on \eqref{greenspectraltwolayer4} with
coefficients given by \eqref{coefftwolayer4}, we have
\begin{equation}
\label{densitytwolayer1}\sigma_{00}^{\uparrow\uparrow}(k_{\rho})=\frac
{k_{0}k_{0z}-k_{1}k_{1z}}{k_{0}k_{0z}+k_{1}k_{1z}},\quad\sigma_{10}%
^{\downarrow\uparrow}(k_{\rho})=\frac{2k_{0}k_{1z}}{k_{0}k_{0z}+k_{1}k_{1z}}.
\end{equation}

\subsection{Two layers with sources in the bottom layer}

Let $L=1$ with source in the bottom layer at $\boldsymbol{r}^{\prime
}=(x^{\prime}, y^{\prime}, z^{\prime})$, i.e., $z^{\prime}<0$. The
domain Green's function has representation
\begin{equation}
\label{greenspectraltwolayer1}%
\begin{cases}
\widehat u_{0}(k_{x},k_{y}, z)=A_{0}\cosh(\ri k_{0z}z)+B_{0}\sinh
(\ri k_{0z}z), & z>0,\\[7pt]%
\widehat u_{1}(k_{x},k_{y}, z)=A_{1}\cosh(\ri k_{1z}z)+B_{1}\sinh
(\ri k_{1z}z)+\frac{\ri e^{\ri(k_{1z}|z-z^{\prime}|-k_{x}x^{\prime}%
-k_{y}y^{\prime})}}{2k_{1z}}, & z<0,
\end{cases}
\end{equation}
or equivalently
\begin{equation}
\label{greenspectraltwolayer2}%
\begin{cases}
\widehat u_{0}(k_{x},k_{y}, z)=b_{0}e^{\ri k_{0z}z}, & z>0,\\[7pt]%
\widehat u_{1}(k_{x},k_{y}, z)=a_{1}e^{-\ri k_{1z}z}+\frac{\ri e^{\ri(k_{1z}%
|z-z^{\prime}|-k_{x}x^{\prime}-k_{y}y^{\prime})}}{2k_{1z}}, & z<0,
\end{cases}
\end{equation}
where
\begin{equation}%
\begin{split}
&  b_{0}=\frac{A_{0}+B_{0}}{2},\quad a_{1}=\frac{A_{1}-B_{1}}{2}.
\end{split}
\end{equation}
Proceeding the recursion \eqref{recursion} gives coefficients
\begin{equation}
\label{coefftwolayer1}%
\begin{cases}
\displaystyle A_{0}=B_{0}=\frac{e^{-\ri k_{1z}z^{\prime}}k_{1}k_{1z}}%
{k_{0}k_{0z}+k_{1}k_{1z}}\frac{\ri e^{-\ri(k_{x}x^{\prime}+k_{y}y^{\prime})}}{
k_{1z}},\\[10pt]%
\displaystyle A_{1}=-B_{1}=\frac{k_{1}k_{1z}-k_{0}k_{0z}}{2(k_{0}k_{0z}%
+k_{1}k_{1z})}\frac{\ri e^{-\ri(k_{x}x^{\prime}+k_{y}y^{\prime})}%
e^{-\ri k_{1z}z^{\prime}}}{k_{1z}},
\end{cases}
\end{equation}
or alternatively
\begin{equation}
\label{coefftwolayer2}%
\begin{cases}
\displaystyle b_{0}=\frac{k_{1}k_{0z}}{k_{0}k_{0z}+k_{1}k_{1z}}\frac
{\ri e^{-\ri(k_{x}x^{\prime}+k_{y}y^{\prime})}e^{-\ri k_{1z}z^{\prime}}}{
k_{0z}},\\[10pt]%
\displaystyle a_{1}=\frac{k_{1}k_{1z}-k_{0}k_{0z}}{2(k_{0}k_{0z}+k_{1}k_{1z}%
)}\frac{\ri e^{-\ri(k_{x}x^{\prime}+k_{y}y^{\prime})}e^{-\ri k_{1z}z^{\prime}%
}}{ k_{1z}}.
\end{cases}
\end{equation}
Taking inverse Fourier transform on \eqref{greenspectraltwolayer2} with
coefficients given by \eqref{coefftwolayer2}, we have
\begin{equation}
\label{densitytwolayer2}\sigma_{01}^{\uparrow\downarrow}(k_{\rho}%
)=\frac{2k_{1}k_{0z}}{k_{0}k_{0z}+k_{1}k_{1z}},\quad\sigma_{11}^{\downarrow
\downarrow}(k_{\rho})=\frac{k_{1}k_{1z}-k_{0}k_{0z}}{k_{0}k_{0z}+k_{1}k_{1z}}.
\end{equation}

\subsection{Three layers with sources in the top layer}

Let $L=2$ with interfaces at $z=0$ and $z=-d<0$. Assume that the source is in
the first layer at $(x^{\prime}, y^{\prime}, z^{\prime})$, i.e., $z^{\prime
}>0$. The domain Green's function has representation
\begin{equation}
\label{greenspectral1}%
\begin{cases}
\displaystyle\widehat u_{0}(k_{x},k_{y}, z)=A_{0}\cosh(\ri k_{0z}z)+B_{0}%
\sinh(\ri k_{0z}z)+\frac{\ri e^{\ri(k_{0z}|z-z^{\prime}|-k_{x}x^{\prime}%
-k_{y}y^{\prime})}}{2k_{0z}}, & 0<z<z^{\prime},\\[7pt]%
\displaystyle\widehat u_{1}(k_{x},k_{y}, z)=A_{1}\cosh(\ri k_{1z}%
(z+d))+B_{1}\sinh(\ri k_{1z}(z+d)), & -d<z<0,\\[7pt]%
\displaystyle\widehat u_{2}(k_{x},k_{y}, z)=A_{2}\cosh(\ri k_{2z}z)+B_{2}%
\sinh(\ri k_{2z}z), & z<-d,
\end{cases}
\end{equation}
or equivalently
\begin{equation}
\label{greenspectral2}%
\begin{cases}
\displaystyle\widehat u_{0}(k_{x},k_{y}, z)=b_{0}e^{\ri k_{0z}z}%
+\frac{\ri e^{\ri(k_{0z}|z-z^{\prime}|-k_{x}x^{\prime}-k_{y}y^{\prime})}%
}{2k_{0z}}, & z>0,\\[7pt]%
\displaystyle\widehat u_{1}(k_{x},k_{y}, z)=a_{1}e^{-\ri k_{1z}(z+d)}%
+b_{1}e^{\ri k_{1z}(z+d)}, & -d<z<0,\\[7pt]%
\displaystyle\widehat u_{2}(k_{x},k_{y}, z)=a_{2}e^{-\ri k_{2z}z}, & z<-d,
\end{cases}
\end{equation}
where
\[
b_{0}=\frac{A_{0}+B_{0}}{2},\quad a_{1}=\frac{A_{1}-B_{1}}{2},\quad
b_{1}=\frac{A_{1}+B_{1}}{2},\quad a_{2}=\frac{A_{2}-B_{2}}{2}.
\]
Again proceeding the recursion \eqref{recursion} gives coefficients
\begin{equation}%
\begin{cases}
\displaystyle A_{0}=B_{0}=\frac{k_{0}k_{0z}\kappa_{11}+\ri k_{1}k_{1z}%
\kappa_{12}}{2(k_{0}k_{0z}\kappa_{11}-\ri k_{1}k_{1z}\kappa_{12})}%
\frac{\ri e^{-\ri(k_{x}x^{\prime}+k_{y}y^{\prime})} e^{\ri k_{0z} z^{\prime}}
}{ k_{0z}},\\[7pt]%
\displaystyle A_{1}=\frac{k_{0}k_{1}k_{0z} k_{1z} }{k_{0}k_{0z}\kappa
_{11}-\ri k_{1}k_{1z}\kappa_{12}}\frac{\ri e^{-\ri(k_{x}x^{\prime}%
+k_{y}y^{\prime})}e^{ik_{0z} z^{\prime}}}{ k_{0z}},\\[7pt]%
\displaystyle B_{1}=\frac{-k_{0}k_{2} k_{0z} k_{2z}}{k_{0}k_{0z}\kappa
_{11}-\ri k_{1}k_{1z}\kappa_{12}}\frac{\ri e^{-\ri(k_{x}x^{\prime}%
+k_{y}y^{\prime})}e^{ik_{0z} z^{\prime}}}{ k_{0z}},\\[7pt]%
\displaystyle A_{2}=-B_{2}=\frac{ k_{0} k_{1}k_{0z} k_{1z}e^{-\ri d k_{2z} }%
}{k_{0}k_{0z}\kappa_{11}-\ri k_{1}k_{1z}\kappa_{12}}\frac{\ri e^{-\ri(k_{x}%
x^{\prime}+k_{y}y^{\prime})}e^{\ri k_{0z} z^{\prime}}}{k_{0z}},
\end{cases}
\end{equation}
and
\[%
\begin{cases}
\displaystyle b_{0}=\frac{k_{0}k_{0z}\kappa_{11}+\ri k_{1}k_{1z}\kappa_{12}%
}{2(k_{0}k_{0z}\kappa_{11}-\ri k_{1}k_{1z}\kappa_{12})}\frac{\ri e^{-\ri(k_{x}%
x^{\prime}+k_{y}y^{\prime})} e^{\ri k_{0z} z^{\prime}}}{ k_{0z}},\\[7pt]%
\displaystyle a_{1}=\frac{k_{0}k_{1z}(k_{1} k_{1z}+ k_{2} k_{2z}) }%
{2(k_{0}k_{0z}\kappa_{11}-\ri k_{1}k_{1z}\kappa_{12})}\frac{\ri e^{-\ri(k_{x}%
x^{\prime}+k_{y}y^{\prime})}e^{\ri k_{0z} z^{\prime}}}{ k_{1z}},\\[7pt]%
\displaystyle b_{1}=\frac{k_{0}k_{1z}(k_{1} k_{1z}- k_{2} k_{2z}) }%
{2(k_{0}k_{0z}\kappa_{11}-\ri k_{1}k_{1z}\kappa_{12})}\frac{\ri e^{-\ri(k_{x}%
x^{\prime}+k_{y}y^{\prime})}e^{\ri k_{0z} z^{\prime}}}{ k_{1z}},\\[7pt]%
\displaystyle a_{2}=\frac{ k_{0} k_{1} k_{1z}k_{2z}e^{-\ri d k_{2z} }}%
{k_{0}k_{0z}\kappa_{11}-\ri k_{1}k_{1z}\kappa_{12}}\frac{\ri e^{-\ri(k_{x}%
x^{\prime}+k_{y}y^{\prime})}e^{\ri k_{0z} z^{\prime}}}{k_{2z}},
\end{cases}
\]
where
\begin{equation}
\label{kappa1112}%
\begin{split}
&  \kappa_{11}=\frac{k_{1}k_{1z}-k_{2}k_{2z}}{2}e^{\ri 2dk_{1z}}+\frac
{k_{1}k_{1z}+k_{2}k_{2z}}{2},\\
&  \kappa_{12}=\ri\Big(\frac{k_{2}k_{2z}-k_{1}k_{1z}}{2}e^{\ri 2dk_{1z}}%
+\frac{k_{1}k_{1z}+k_{2}k_{2z}}{2}\Big).
\end{split}
\end{equation}
Substuting into \eqref{greenspectral2} and applying inverse Fourier transform,
we have
\begin{equation}
\label{densitythreelayer1}%
\begin{cases}
\displaystyle\sigma_{00}^{\uparrow\uparrow}(k_{\rho})=\frac{k_{0}k_{0z}%
\kappa_{11}+\ri k_{1}k_{1z}\kappa_{12}}{k_{0}k_{0z}\kappa_{11}-\ri k_{1}%
k_{1z}\kappa_{12}},\\[8pt]%
\displaystyle\sigma_{10}^{\uparrow\uparrow}(k_{\rho})=\frac{k_{0}k_{1z}(k_{1}
k_{1z}- k_{2} k_{2z})e^{\ri dk_{1z}}}{k_{0}k_{0z}\kappa_{11}-\ri k_{1}%
k_{1z}\kappa_{12}},\\[8pt]%
\displaystyle\sigma_{10}^{\downarrow\uparrow}(k_{\rho})=\frac{k_{0}%
k_{1z}(k_{1} k_{1z}+ k_{2} k_{2z})e^{\ri dk_{1z}}}{k_{0}k_{0z}\kappa
_{11}-\ri k_{1}k_{1z}\kappa_{12}},\\[8pt]%
\displaystyle\sigma_{20}^{\downarrow\uparrow}(k_{\rho})=\frac{2k_{0} k_{1}
k_{1z}k_{2z}e^{\ri dk_{1z}} }{k_{0}k_{0z}\kappa_{11}-\ri k_{1}%
k_{1z}\kappa_{12}}.
\end{cases}
\end{equation}

\subsection{Three layers with sources in the middle layer}

Let $L=2$ with interfaces at $z=0$ and $z=-d<0$. Assume that the source is in
the middle layer at $(x^{\prime}, y^{\prime}, z^{\prime})$, i.e.,
$-d<z^{\prime}<0$. The domain Green's function has representation
\begin{equation}
\label{greenspectralmsour1}%
\begin{cases}
\displaystyle \widehat u_{0}=A_{0}\cosh(\ri k_{0z}z)+B_{0}\sinh(\ri k_{0z}%
z), & z>0,\\[7pt]%
\displaystyle \widehat u_{0}=A_{1}\cosh(\ri k_{1z}(z+d))+B_{1}\sinh
(\ri k_{1z}(z+d))+\frac{\ri e^{\ri(k_{1z}|z-z^{\prime}|-k_{x}x^{\prime}%
-k_{y}y^{\prime})}}{2k_{1z}}, & -d<z<z^{\prime},\\[7pt]%
\displaystyle \widehat u_{2}=A_{2}\cosh(\ri k_{2z}z)+B_{2}\sinh(\ri k_{2z}%
z), & z<-d,
\end{cases}
\end{equation}
or equivalently
\begin{equation}
\label{greenspectralmsour2}%
\begin{cases}
\displaystyle \widehat u_{0}=b_{0}e^{\ri k_{0z}z}, & z>0,\\[7pt]%
\displaystyle \widehat u_{1}=a_{1}e^{-\ri k_{1z}(z+d)}+b_{1}e^{\ri k_{1z}%
(z+d)}+\frac{\ri e^{\ri(k_{1z}|z-z^{\prime}|-k_{x}x^{\prime}-k_{y}y^{\prime}%
)}}{2k_{1z}}, & -d<z<0,\\[7pt]%
\displaystyle \widehat u_{2}=a_{2}e^{-\ri k_{2z}z}, & z<-d,
\end{cases}
\end{equation}
where
\[
b_{0}=\frac{A_{0}+B_{0}}{2},\quad a_{1}=\frac{A_{1}-B_{1}}{2},\quad
b_{1}=\frac{A_{1}+B_{1}}{2},\quad a_{2}=\frac{A_{2}-B_{2}}{2}.
\]
Again proceeding the recursion \eqref{recursion} gives coefficients
\begin{equation}%
\begin{cases}
\displaystyle A_{0}=B_{0}=\frac{k_{1}k_{1z}\kappa_{23}}{k_{0}k_{0z}\kappa
_{11}-\ri k_{1}k_{1z}\kappa_{12}}\frac{\ri e^{-\ri(k_{x}x^{\prime}%
+k_{y}y^{\prime})}}{ k_{1z}},\\[7pt]%
\displaystyle A_{1}=\frac{(e^{-\ri k_{1z}z^{\prime}}k_{1}k_{1z}(k_{1}%
k_{1z}-k_{0}k_{0z})+e^{\ri k_{1z}(d+z^{\prime})}\kappa_{21})e^{\ri dk_{1z}}%
}{k_{0}k_{0z}\kappa_{11}-\ri k_{1}k_{1z}\kappa_{12}}\frac{\ri e^{-\ri(k_{x}%
x^{\prime}+k_{y}y^{\prime})}}{k_{1z}},\\[7pt]%
\displaystyle B_{1}=\frac{(e^{-\ri k_{1z}z^{\prime}}k_{2}k_{2z}(k_{0}%
k_{0z}-k_{1}k_{1z})+e^{\ri k_{1z}(d+z^{\prime})}\kappa_{21}^{\prime
})e^{\ri dk_{1z}}}{k_{0}k_{0z}\kappa_{11}-\ri k_{1}k_{1z}\kappa_{12}}%
\frac{\ri e^{-\ri(k_{x}x^{\prime}+k_{y}y^{\prime})}}{k_{1z}},\\[7pt]%
\displaystyle A_{2}=-B_{2}=\frac{ k_{1} k_{1z}\kappa_{22}e^{\ri d(k_{1z}%
-k_{2z})}}{k_{0}k_{0z}\kappa_{11}-\ri k_{1}k_{1z}\kappa_{12}}\frac
{\ri e^{-\ri(k_{x}x^{\prime}+k_{y}y^{\prime})}}{k_{1z}},
\end{cases}
\end{equation}
where $\kappa_{11}, \kappa_{12}$ are defined in \eqref{kappa1112} and
\[%
\begin{split}
\kappa_{21}  &  =(k_{1}k_{1z}-k_{2}k_{2z})\Big(\frac{k_{1} k_{1z}-k_{0}
k_{0z}}{2}e^{\ri dk_{1z}}+\frac{k_{0} k_{0z}+k_{1} k_{1z}}{2}e^{-\ri d k_{1z}%
}\Big),\\
\kappa_{21}^{\prime}  &  =(k_{1}k_{1z}-k_{2}k_{2z})\Big(\frac{k_{0}
k_{0z}-k_{1} k_{1z}}{2}e^{\ri dk_{1z}}+\frac{k_{0} k_{0z}+k_{1} k_{1z}}%
{2}e^{-\ri d k_{1z}}\Big),\\
\kappa_{22}  &  =\frac{k_{1} k_{1z}+k_{0} k_{0z}}{2}e^{\ri k_{1z}z^{\prime}%
}+\frac{k_{1} k_{1z}-k_{0} k_{0z}}{2}e^{-\ri k_{1z}z^{\prime}},\\
\kappa_{23}  &  =\frac{k_{1}k_{1z}- k_{2}k_{2z}}{2}e^{\ri k_{1z}(2d+z^{\prime
})}+\frac{k_{1}k_{1z}+ k_{2}k_{2z}}{2}e^{-\ri k_{1z}z^{\prime}}.
\end{split}
\]
Noting that
\begin{equation}
\kappa_{21}+\kappa_{22}=(k_{0}k_{0z}+k_{1}k_{1z})e^{\ri k_{1z}z^{\prime}%
},\quad\kappa_{21}-\kappa_{22}=(k_{0}k_{0z}-k_{1}k_{1z})e^{-\ri k_{1z}%
z^{\prime}},
\end{equation}
we further have
\begin{equation}%
\begin{cases}
\displaystyle b_{0}=\frac{\ri e^{-\ri(k_{x}x^{\prime}+k_{y}y^{\prime})}}{
k_{0z}}\frac{k_{1}k_{0z}\kappa_{23}}{k_{0}k_{0z}\kappa_{11}-\ri k_{1}%
k_{1z}\kappa_{12}},\\[7pt]%
\displaystyle a_{1}=\frac{\ri e^{-\ri(k_{x}x^{\prime}+k_{y}y^{\prime})}}{
2k_{1z}}\frac{(k_{1}k_{1z}-k_{0}k_{0z})\kappa_{23}e^{\ri dk_{1z}}}{k_{0}%
k_{0z}\kappa_{11}-\ri k_{1}k_{1z}\kappa_{12}},\\[7pt]%
\displaystyle b_{1}=\frac{\ri e^{-\ri(k_{x}x^{\prime}+k_{y}y^{\prime})}%
}{2k_{1z}}\frac{(k_{1}k_{1z}-k_{2}k_{2z})\kappa_{22}e^{\ri dk_{1z}}}%
{k_{0}k_{0z}\kappa_{11}-\ri k_{1}k_{1z}\kappa_{12}},\\[7pt]%
\displaystyle a_{2}=\frac{\ri e^{-\ri(k_{x}x^{\prime}+k_{y}y^{\prime})}%
}{k_{2z}}\frac{ k_{1} k_{2z}\kappa_{22}e^{\ri dk_{1z}}e^{-\ri d k_{2z} }%
}{k_{0}k_{0z}\kappa_{11}-\ri k_{1}k_{1z}\kappa_{12}}.
\end{cases}
\end{equation}
Substuting into \eqref{greenspectralmsour2} and applying inverse Fourier
transform, we have
\begin{equation}
\label{densitythreelayer2}\hspace{-7pt}
\begin{cases}
\displaystyle\{\sigma_{01}^{\uparrow\uparrow}(k_{\rho}), \sigma_{01}%
^{\uparrow\downarrow}(k_{\rho})\}=\frac{k_{1}k_{0z}}{k_{0}k_{0z}\kappa
_{11}-\ri k_{1}k_{1z}\kappa_{12}}\big\{(k_{1}k_{1z}-k_{2}k_{2z})e^{\ri dk_{1z}%
}, k_{1}k_{1z}+k_{2}k_{2z}\},\\[8pt]%
\displaystyle\{\sigma_{11}^{\uparrow\uparrow}(k_{\rho}), \sigma_{11}%
^{\uparrow\downarrow}(k_{\rho})\}=\frac{k_{1}k_{1z}-k_{2}k_{2z}}{k_{0}%
k_{0z}\kappa_{11}-\ri k_{1}k_{1z}\kappa_{12}}\Big\{\frac{k_{1}k_{1z}%
+k_{0}k_{0z}}{2}, \frac{k_{1}k_{1z}-k_{0}k_{0z}}{2}e^{\ri dk_{1z}%
}\Big\},\\[8pt]%
\displaystyle\{\sigma_{11}^{\downarrow\uparrow}(k_{\rho}), \sigma
_{11}^{\downarrow\downarrow}(k_{\rho})\}=\frac{(k_{1}k_{1z}-k_{0}%
k_{0z})e^{\ri dk_{1z}}}{k_{0}k_{0z}\kappa_{11}-\ri k_{1}k_{1z}\kappa_{12}%
}\Big\{\frac{k_{1}k_{1z}-k_{2}k_{2z}}{2}e^{\ri dk_{1z}}, \frac{k_{1}%
k_{1z}+k_{2}k_{2z}}{2}\Big\},\\[8pt]%
\displaystyle\{\sigma_{21}^{\downarrow\uparrow}(k_{\rho}), \sigma
_{21}^{\downarrow\downarrow}(k_{\rho})\}=\frac{k_{1} k_{2z}e^{-\ri dk_{2z}}%
}{k_{0}k_{0z}\kappa_{11}-\ri k_{1}k_{1z}\kappa_{12}}\Big\{k_{1}k_{1z}%
+k_{0}k_{0z}, (k_{1}k_{1z}-k_{0}k_{0z})e^{\ri dk_{1z}}\Big\}.
\end{cases}
\end{equation}

\subsection{Three layers with sources in the bottom layer}

Let $L=2$ with interfaces at $z=0$ and $z=-d<0$. Assume that the source is in
the bottom layer at $(x^{\prime}, y^{\prime}, z^{\prime})$, i.e., $z^{\prime
}<-d$. The domain Green's function has representation
\begin{equation}
\label{greenspectralbsour1}%
\begin{cases}
\displaystyle \widehat u_{0}(k_{x},k_{y}, z)=A_{0}\cosh(\ri k_{0z}%
z)+B_{0}\sinh(\ri k_{0z}z), & z>0,\\[7pt]%
\displaystyle \widehat u_{1}(k_{x},k_{y}, z)=A_{1}\cosh(\ri k_{1z}%
(z+d))+B_{2}\sinh(\ri k_{1z}(z+d)),, & -d<z<0,\\[7pt]%
\displaystyle \widehat u_{2}(k_{x},k_{y}, z)=A_{2}\cosh(\ri k_{2z}%
z)+B_{2}\sinh(\ri k_{2z}z), & z<-d,
\end{cases}
\end{equation}
or equivalently
\begin{equation}
\label{greenspectralbsour2}%
\begin{cases}
\displaystyle \widehat u_{0}(k_{x},k_{y}, z)=b_{0}e^{\ri k_{0z}z}, &
z>0,\\[7pt]%
\displaystyle \widehat u_{1}(k_{x},k_{y}, z)=a_{1}e^{-\ri k_{1z}(z+d)}%
+b_{1}e^{\ri k_{1z}(z+d)}, & -d<z<0,\\[7pt]%
\displaystyle \widehat u_{2}(k_{x},k_{y}, z)=a_{2}e^{-\ri k_{2z}z}%
+\frac{\ri e^{\ri(k_{2z}|z-z^{\prime}|-k_{x}x^{\prime}-k_{y}y^{\prime})}%
}{2k_{2z}}, & z<-d,
\end{cases}
\end{equation}
where
\[
b_{0}=\frac{A_{0}+B_{0}}{2},\quad a_{1}=\frac{A_{1}-B_{1}}{2},\quad
b_{1}=\frac{A_{1}+B_{1}}{2},\quad a_{2}=\frac{A_{2}^{U}-B_{2}^{U}}{2}.
\]
then the solution can be calculated via \eqref{coefficientsolver}, i.e.,
\begin{equation}%
\begin{cases}
\displaystyle A_{0}=B_{0}=\frac{k_{1}k_{1z} k_{2}k_{2z}e^{\ri dk_{1z}}}%
{k_{2}k_{2z}\kappa_{31}-\ri k_{1}k_{1z}\kappa_{32}}\frac{\ri e^{-\ri(k_{x}%
x^{\prime}+k_{y}y^{\prime})}e^{-\ri k_{2z}(d+z^{\prime})}}{ k_{2z}},\\[7pt]%
\displaystyle A_{1}=\frac{k_{2}k_{2z}\kappa_{31}e^{-\ri k_{2z}(d+z^{\prime})}%
}{k_{2}k_{2z}\kappa_{31}-\ri k_{1}k_{1z}\kappa_{32}}\frac{\ri e^{-\ri(k_{x}%
x^{\prime}+k_{y}y^{\prime})}}{k_{2z}},\\[7pt]%
\displaystyle B_{1}=\frac{-\ri k_{2}k_{2z}\kappa_{32}e^{-\ri k_{2z}%
(d+z^{\prime})}}{k_{2}k_{2z}\kappa_{31}-\ri k_{1}k_{1z}\kappa_{32}}%
\frac{\ri e^{-\ri(k_{x}x^{\prime}+k_{y}y^{\prime})}}{k_{2z}},\\[7pt]%
\displaystyle A_{2}=-B_{2}=\frac{k_{2}k_{2z}\kappa_{31}+\ri k_{1}k_{1z}%
\kappa_{32}}{2 (k_{2}k_{2z}\kappa_{31}-\ri k_{1}k_{1z}\kappa_{32})}%
\frac{\ri e^{-\ri(k_{x}x^{\prime}+k_{y}y^{\prime})}e^{-\ri k_{2z}(d+z)}%
}{k_{2z}},
\end{cases}
\end{equation}
where
\begin{equation}%
\begin{split}
&  \kappa_{31}=\frac{k_{1} k_{1z} -k_{0} k_{0z}}{2}e^{\ri 2dk_{1z}}%
+\frac{k_{1} k_{1z} + k_{0} k_{0z}}{2},\\
&  \kappa_{32}=\ri\Big(\frac{k_{0} k_{0z} -k_{1} k_{1z}}{2}e^{\ri 2dk_{1z}}
+\frac{k_{0} k_{0z} +k_{1} k_{1z}}{2}\Big).
\end{split}
\end{equation}
Then
\begin{equation}%
\begin{cases}
\displaystyle b_{0}=\frac{k_{1}k_{1z} k_{2}k_{2z}e^{\ri dk_{1z}}}{k_{2}%
k_{2z}\kappa_{31}+\ri k_{1}k_{1z}\kappa_{32}}\frac{\ri e^{-\ri(k_{x}x^{\prime
}+k_{y}y^{\prime})}e^{-\ri k_{2z}(d+z^{\prime})}}{ k_{2z}},\\[7pt]%
\displaystyle a_{1}=\frac{\ri e^{-\ri(k_{x}x^{\prime}+k_{y}y^{\prime})}%
}{k_{2z}}\frac{k_{2}k_{2z}(k_{1}k_{1z}-k_{0}k_{0z})e^{\ri 2dk_{1z}%
}e^{-\ri k_{2z}(d+z^{\prime})}}{2(k_{2}k_{2z}\kappa_{31}+\ri k_{1}k_{1z}%
\kappa_{32})},\\[7pt]%
\displaystyle b_{1}=\frac{\ri e^{-\ri(k_{x}x^{\prime}+k_{y}y^{\prime})}}{
k_{2z}}\frac{k_{2}k_{2z}(k_{1}k_{1z}+k_{0}k_{0z})e^{-\ri k_{2z}(d+z^{\prime}%
)}}{2 (k_{2}k_{2z}\kappa_{31}+\ri k_{1}k_{1z}\kappa_{32})},\\[7pt]%
\displaystyle a_{2}=\frac{k_{2}k_{2z}\kappa_{31}+\ri k_{1}k_{1z}\kappa_{32}}{2
(k_{2}k_{2z}\kappa_{31}-\ri k_{1}k_{1z}\kappa_{32})}\frac{\ri e^{-\ri(k_{x}%
x^{\prime}+k_{y}y^{\prime})}e^{-\ri dk_{2z}}}{k_{2z}}.
\end{cases}
\end{equation}
Substuting into \eqref{greenspectralbsour2} and applying inverse Fourier
transform, we have
\begin{equation}
\label{densitythreelayer3}%
\begin{cases}
\displaystyle\sigma_{02}^{\uparrow\downarrow}(k_{\rho})=\frac{2k_{1}%
k_{1z}k_{2}k_{0z}e^{\ri dk_{1z}}}{k_{2}k_{2z}\kappa_{31}-\ri k_{1}k_{1z}%
\kappa_{32}},\\[8pt]%
\displaystyle\sigma_{12}^{\uparrow\downarrow}(k_{\rho})=\frac{k_{2}%
k_{1z}(k_{1}k_{1z}+k_{0}k_{0z})}{k_{2}k_{2z}\kappa_{31}-\ri k_{1}k_{1z}%
\kappa_{32}},\\[8pt]%
\displaystyle\sigma_{12}^{\downarrow\downarrow}(k_{\rho})=\frac{k_{2}%
k_{1z}(k_{1}k_{1z}-k_{0}k_{0z})e^{\ri 2dk_{1z}}}{k_{2}k_{2z}\kappa
_{31}-\ri k_{1}k_{1z}\kappa_{32}},\\[8pt]%
\displaystyle\sigma_{22}^{\downarrow\downarrow}(k_{\rho})=\frac{k_{2}%
k_{2z}\kappa_{31}+\ri k_{1}k_{1z}\kappa_{32}}{k_{2}%
k_{2z}\kappa_{31}-\ri k_{1}k_{1z}\kappa_{32}}.
\end{cases}
\end{equation}
It is worthy to point out that
\[
k_{2}k_{2z}\kappa_{31}-\ri k_{1}k_{1z}\kappa_{32}=k_{0}k_{0z}\kappa
_{11}-\ri k_{1}k_{1z}\kappa_{12}.
\]


\begin{thebibliography}{10}
\expandafter\ifx\csname url\endcsname\relax
  \def\url#1{\texttt{#1}}\fi
\expandafter\ifx\csname urlprefix\endcsname\relax\def\urlprefix{URL }\fi
\expandafter\ifx\csname href\endcsname\relax
  \def\href#1#2{#2} \def\path#1{#1}\fi

\bibitem{michalski1990electromagnetic}
K.~A. Michalski, D.~L. Zheng, Electromagnetic scattering and radiation by
  surfaces of arbitrary shape in layered media. {I}. theory, IEEE Trans.
  Antennas Propag. 38~(3) (1990) 335--344.

\bibitem{millard2003fast}
X.~M. Millard, Q.~H. Liu, A fast volume integral equation solver for
  electromagnetic scattering from large inhomogeneous objects in planarly
  layered media, IEEE Trans. Antennas Propag. 51~(9) (2003) 2393--2401.

\bibitem{chen2018accurate}
D.~Chen, M.~H. Cho, W.~Cai, Accurate and efficient {N}ystr$\ddot{\rm o}$m
  volume integral equation method for electromagnetic scattering of 3-{D}
  metamaterials in layered media, SIAM J. Sci. Comput. 40~(1) (2018)
  B259--B282.

\bibitem{lai2014fast}
J.~Lai, M.~Kobayashi, L.~Greengard, A fast solver for multi-particle scattering
  in a layered medium, Opt. Express 22~(17) (2014) 20481--20499.

\bibitem {cho2012}M.~H. Cho, W.~Cai, A parallel fast algorithm for computing
the helmholtz integral operator in 3-d layered media, Journal of Computational
Physics 231 (2012) 5910-5925.

\bibitem{cho2018spectrally}
M.~H. Cho, Spectrally-accurate numerical method for acoustic scattering from
  doubly-periodic 3d multilayered media, arXiv preprint arXiv:1806.03813.

\bibitem{cho2015robust}
M.~H. Cho, A.~H. Barnett, Robust fast direct integral equation solver for
  quasi-periodic scattering problems with a large number of layers, Opt.
  express 23~(2) (2015) 1775--1799.


\bibitem{lai2015fast}
J.~Lai, M.~Kobayashi, A.~Barnett, A fast and robust solver for the scattering
  from a layered periodic structure containing multi-particle inclusions, J.
  Comput. Phys. 298 (2015) 194--208.

\bibitem{greengard1987fast}
L.~Greengard, V.~Rokhlin, A fast algorithm for particle simulations, J. Comput.
  phys. 73~(2) (1987) 325--348.

\bibitem{greengard1997new}
L.~Greengard, V.~Rokhlin, A new version of the fast multipole method for the
  laplace equation in three dimensions, Acta Numer. 6 (1997) 229--269.

\bibitem{chow1991closed}
Y.~L. Chow, J.~J. Yang, D.~G. Fang, G.~E. Howard, A closed-form spatial
  {G}reen's function for the thick microstrip substrate, IEEE Trans. Microwave
  Theory Tech. 39~(3) (1991) 588--592.

\bibitem{aksun1996robust}
M.~I. Aksun, A robust approach for the derivation of closed-form {G}reen's
  functions, IEEE Trans. Microw. Theory Tech. 44~(5) (1996) 651--658.

\bibitem{alparslan2010closed}
A.~Alparslan, M.~I. Aksun, K.~A. Michalski, Closed-form {G}reen's functions in
  planar layered media for all ranges and materials, IEEE Trans. Microw. Theory
  Tech. 58~(3) (2010) 602--613.

\bibitem{jandhyala1995multipole}
V.~Jandhyala, E.~Michielssen, R.~Mittra, Multipole-accelerated capacitance
  computation for 3-{D} structures in a stratified dielectric medium using a
  closed-form {G}reen's function, Int. J. Microwave Millimeter-Wave
  Computer-Aided Eng. 5~(2) (1995) 68--78.

\bibitem{gurel1996electromagnetic}
L.~Gurel, M.~I. Aksun, Electromagnetic scattering solution of conducting strips
  in layered media using the fast multipole method, IEEE Microwave Guided Wave
  Lett. 6~(8) (1996) 277.

\bibitem{geng2001fast}
N.~Geng, A.~Sullivan, L.~Carin, Fast multipole method for scattering from an
  arbitrary {PEC} target above or buried in a lossy half space, IEEE Trans.
  Antennas Propag. 49~(5) (2001) 740--748.

\bibitem{tausch2004variable}
J.~Tausch, The variable order fast multipole method for boundary integral
  equations of the second kind, Computing 72~(3-4) (2004) 267--291.

\bibitem{ying2004kernel}
L.~X. Ying, G.~Biros, D.~Zorin, A kernel-independent adaptive fast multipole
  algorithm in two and three dimensions, J. of Comput. Phys. 196~(2) (2004)
  591--626.

\bibitem{fong2009black}
W.~Fong, E.~Darve, The black-box fast multipole method, J. Comput. Phys.
  228~(23) (2009) 8712--8725.

\bibitem{darve2004efficient}
E.~Darve, P.~Hav{\'e}, Efficient fast multipole method for low-frequency
  scattering, J. Comput. Phys. 197~(1) (2004) 341--363.

\bibitem{cho2012parallel}
M.~H. Cho, W.~Cai, A parallel fast algorithm for computing the {H}elmholtz
  integral operator in 3-{D} layered media, J. Comput. Phys. 231~(17) (2012)
  5910--5925.

\bibitem{li2009cartesian}
P.~J. Li, H.~Johnston, R.~Krasny, A {C}artesian treecode for screened {C}oulomb
  interactions, J. Computat. Phys. 228~(10) (2009) 3858--3868.

\bibitem{tausch2003fast}
J.~Tausch, The fast multipole method for arbitrary {G}reen's functions,
  Contemporary Mathematics 329 (2003) 307--314.

\bibitem{cai2013computational}
W.~Cai, Computational Methods for Electromagnetic Phenomena: electrostatics in
  solvation, scattering, and electron transport, Cambridge University Press,
  New York, NY, 2013.

\bibitem{cai2000fast}
W.~Cai, T.~J. Yu, Fast calculations of dyadic {G}reen's functions for
  electromagnetic scattering in a multilayered medium, J. Comput. Phys. 165~(1)
  (2000) 1--21.

\bibitem{okhmatovski2004evaluation}
V.~I. Okhmatovski, A.~C. Cangellaris, Evaluation of layered media {G}reen's
  functions via rational function fitting, IEEE microw. Wireless Comp. Lett.
  14~(1) (2004) 22--24.

\bibitem{paulus2000accurate}
M.~Paulus, P.~Gay-Balmaz, O.~J.~F. Martin, Accurate and efficient computation
  of the {G}reen's tensor for stratified media, Physical Review E 62~(4)
  (2000) 5797.

\bibitem{fang1988discrete}
D.~G. Fang, J.~J. Yang, G.~Y. Delisle, Discrete image theory for horizontal
  electric dipoles in a multilayered medium, in: IEE Proc. H. Microw. Antennas
  and Propag., Vol. 135, IET, 1988, pp. 297--303.

\bibitem{ochmann2004complex}
M.~Ochmann, The complex equivalent source method for sound propagation over an
  impedance plane, J. Acoust. Soc. Amer. 116~(6) (2004) 3304--3311.

\bibitem{koh2006exact}
I.-S. Koh, J.-G. Yook, Exact closed-form expression of a sommerfeld integral
  for the impedance plane problem, IEEE Trans. Antennas Propag. 54~(9) (2006)
  2568--2576.

\bibitem{li1996near}
Y.~L. Li, M.~J. White, Near-field computation for sound propagation above
  ground using complex image theory, J. Acoust. Soc. Amer. 99~(2) (1996)
  755--760.

\bibitem{ling2000discrete}
F.~Ling, J.~M. Jin, Discrete complex image method for {G}reen's functions of
  general multilayer media, IEEE Microw. Guided Wave Lett. 10~(10) (2000)
  400--402.

\bibitem{o2014efficient}
M.~oneil, L.~Greengard, A.~Pataki, On the efficient representation of the
  half-space impedance {G}reen's function for the helmholtz equation, Wave
  Motion 51~(1) (2014) 1--13.

\bibitem{lai2016new}
J.~Lai, L.~Greengard, M.~O'Neil, A new hybrid integral representation for
  frequency domain scattering in layered media, Appl. Comput. Harmon. A. 45~(2)
  (2018) 359--378.

\bibitem{hua1989generalized}
Y.~B. Hua, T.~K. Sarkar, Generalized pencil-of-function method for extracting
  poles of an {EM} system from its transient response, IEEE Trans. Antennas
  Propag. 37~(2) (1989) 229--234.

\bibitem{martin2006multiple}
P.~A. Martin, Multiple scattering: interaction of time-harmonic waves with {N}
  obstacles, no. 107, Cambridge University Press, 2006.

\bibitem{debuhr2016dashmm}
J.~DeBuhr, B.~Zhang, A.~Tsueda, V.~Tilstra-Smith, T.~Sterling, Dashmm:
  {D}ynamic adaptive system for hierarchical multipole methods, Commun. Comput.
  Phys. 20~(4) (2016) 1106--1126.

\bibitem {huchew2000}B.~Hu, W.~C. Chew, Fast inhomogeneous plane wave
algorithm for electromagnetic solutions in layered medium structures:
two-dimensional case, Radio Sci. 35~(1) (2000) 31--43.

\bibitem {bruno2016windowed}O.~P. Bruno, M.~Lyon, C.~P{\'e}rez-Arancibia,
C.~Turc, Windowed green function method for layered-media scattering, SIAM
Journal on Applied Mathematics 76~(5) (2016) 1871--1898.

\end{thebibliography}

\section*{Acknowledgement}

This work was supported by US Army Research Office (Grant No.
W911NF-17-1-0368) and US National Science Foundation (Grant No. DMS-1802143).
The authors thank Prof. Johannes Tausch for helpful discussions.

\bibliographystyle{plain}

\end{document}